\numberwithin{equation}{section}
\title{\Large{\uppercase{\bf Deza graphs and regular polyhedra}}}
\author{\Large{Riccardo W. Maffucci}}
\date{}
\newcommand{\Addresses}{  
		R.W.~Maffucci, \textsc{Dipartimento di Matematica, Universit\`a di Torino\\\indent Via Carlo Alberto 10, Turin 10123, Italy}\par\nopagebreak\vspace{-0.35cm}
		\textit{E-mail address}, R.W.~Maffucci: \href{mailto:riccardowm@hotmail.com}{\texttt{riccardowm@hotmail.com}}
  }
\def\ct{\mathcal{T}}
\def\calc{\mathcal{C}}
\def\cl{\mathcal{L}}
\def\cm{\mathcal{M}}
\def\cf{\mathcal{F}}
\def\calr{\mathcal{R}}
\newtheorem{thm}{Theorem}[section]
\newtheorem{lemma}[thm]{Lemma}
\newtheorem{prop}[thm]{Proposition}
\newtheorem{cor}[thm]{Corollary}
\newtheorem{defin}[thm]{Definition}
\begin{document}
\titleformat{\section}
  {\Large\scshape}{\thesection}{1em}{}
\titleformat{\subsection}
  {\large\scshape}{\thesubsection}{1em}{}
\maketitle
\Addresses

\begin{abstract}
We classify all regular polyhedra according to their type i.e., the collection of numbers of common neighbours that any pair of distinct vertices may have (polyhedra are planar, $3$-connected graphs). As an application, we recover the classification of planar Deza graphs.

Next, we focus on the class of quartic polyhedral Deza graphs, and completely characterise it in terms of medial graphs of certain specific cubic polyhedra. Furthermore, within the aforementioned class of quartic polyhedral Deza graphs, we study the extremal graphs with respect to the ratio of number of triangular faces to the total. In the maximal extreme, these notably coincide with the class of line graphs of cubic polyhedra of girth $5$.

We also fully characterise the quartic polyhedra of type $\{0,1,2,3\}$, and in particular we prove that none of them are medial graphs.

On one hand our findings fit within the novel research area of common neighbours in graphs. On the other hand, our findings imply general properties of regular planar graphs and regular polyhedra.
\end{abstract}
{\bf Keywords:} Regular graph, Deza graph, Strongly regular graph, Planar graph, Polyhedron, Extremal graph theory, Graph transformation, Quadrangulation, Pentangulation, Line graph, Medial graph, Radial graph.
\\
{\bf MSC(2020):} 05E30, 05C10, 05C35, 05C75, 05C76, 52B05, 52B10.

\section{Introduction}
\subsection{Planar Deza graphs}
The graphs that appear in this paper are simple, undirected, and finite. We will write $V(G)$ and $E(G)$ for the vertex and edge sets of $G$. A graph is $r$-regular if every vertex has degree $r$, that is to say, has exactly $r$ neighbours (adjacent vertices). A graph is $k$-connected if it has more than $k$ vertices, and however one removes fewer than $k$ vertices, the resulting graph is connected. We say that a graph has connectivity $k$ if it is $k$-connected, but not $k+1$-connected.

A regular graph $G$ is called a Deza graph of type $\{\lambda,\mu\}$, or a $\{\lambda,\mu\}$-Deza graph, if there exist non-negative integers $\lambda\leq\mu$ such that for every distinct $u,v\in V(G)$, the number of common neighbours of $u,v$ is either $\lambda$ or $\mu$. Deza graphs are a generalisation of strongly regular graphs proposed in \cite{erickson1999deza} (see also \cite{deza1994ridge}). In a strongly regular graph, there exist $\lambda_1,\lambda_2$ such that every pair of adjacent vertices has $\lambda_1$ common neighbours, and every pair of non-adjacent vertices has $\lambda_2$ common neighbours. Substantial attention has since been given to the study of Deza graphs \cite{gavrilyuk2014vertex,kabanov2019strictly,goryainov2019deza,kabanov2020deza,goryainov2021deza} and related topics \cite{gengsheng2003directed,wang2006deza,haemers2011divisible,jia2018generalized,gavrilyuk2024strongly,crnkovic2025q}.

In this paper, we focus on $r$-regular planar graphs. Here we have $r\leq 5$. The terms cubic, quartic, and quintic refer to $3$-, $4$-, and $5$-regular graphs respectively. Our starting point is the following classification of planar Deza graphs, implicit in the literature.

\begin{prop}[{cf.\ \cite{limaye2005regular,brouwer2006classification,goryainov2021enumeration}}]
	\label{prop:1}
	All planar Deza graphs are listed in Tables \ref{t:1} ($3$-connected) and \ref{t:2} (not $3$-connected).
	\\
	Specifically a planar, $3$-connected graph $G$ is a Deza graph if and only if either $G$ is $3$-regular with no quadrangular faces, or $G$ is $4$-regular with no $4$-cycles, or $G$ is one of the five graphs in Figure \ref{fig:s}. 
\end{prop}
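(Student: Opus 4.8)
The plan is to reduce everything to a single dictionary between common neighbours and short cycles. For distinct $u,v$, the number of common neighbours equals the number of triangles through $uv$ when $u\sim v$, and equals the number of length-two paths joining $u$ to $v$ otherwise; in particular a pair has at least two common neighbours exactly when it lies on a $4$-cycle. Since a simple planar graph has a vertex of degree at most $5$ and $3$-connectivity forces minimum degree at least $3$, an $r$-regular polyhedron satisfies $r\in\{3,4,5\}$, and I would treat the three regularities in turn.

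First I would prove two reduction lemmas from $3$-connectivity. For cubic $G$, a chord-free $4$-cycle must bound a face: otherwise its four leaving edges split into two non-empty sides separated by only two vertices, contradicting $3$-connectivity; a chorded $4$-cycle instead uses up all degrees and forces $G=K_4$. Hence for cubic polyhedra, ``no quadrangular face'' is equivalent to ``no $4$-cycle'' (with $K_4$ the only escape). For quartic $G$, I would note that any pair with two common neighbours $w_1,w_2$, adjacent or not, already yields the $4$-cycle $uw_1vw_2$; thus ``no $4$-cycle'' forces every pair to have at most one common neighbour.

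These lemmas deliver sufficiency immediately. A cubic polyhedron with no quadrangular face and distinct from $K_4$ has no $4$-cycle, so every pair has at most one common neighbour and the spectrum lies in $\{0,1\}$; together with $K_4$ (type $\{2\}$) the whole first family is Deza. A quartic polyhedron with no $4$-cycle has spectrum in $\{0,1\}$ by the second lemma, hence is Deza. Both infinite families are thereby accounted for, and it remains to establish necessity together with the finite list of exceptions.

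For necessity I would argue that a cubic polyhedron carrying a quadrangular face, or a quartic polyhedron carrying a $4$-cycle, realises a common-neighbour value at least $2$; the Deza constraint then allows at most one further value. The quantitative input is that every quartic planar graph contains a triangle, since a triangle-free one would violate $E\le 2V-4$, so a value coming from triangles is also forced; requiring the spectrum to omit either $0$ or $1$ then pins the diameter down, and Euler's formula bounds the order, reducing the remaining cases to a finite search. I expect this search to be the main obstacle: carried out carefully it produces exactly five sporadic Deza polyhedra outside the two families, namely the cube (type $\{0,2\}$), the octahedron (type $\{2,4\}$), the square antiprism and the medial graph of the triangular prism (both type $\{1,2\}$), and the icosahedron (type $\{0,2\}$); the quintic case is closed by showing, via Euler's formula and the heavy triangle count forced in degree $5$, that the icosahedron is the only $5$-regular polyhedron meeting the two-value constraint. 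The delicate points are excluding larger sporadic quartic and quintic examples and verifying that the cube is the unique cubic polyhedron with a quadrangular face that is Deza, both of which rest on converting the two-value condition into an effective bound on the number of vertices.
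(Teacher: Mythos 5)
Your sufficiency direction is sound and matches the paper's: the equivalence ``some pair has at least two common neighbours iff $G$ contains a $4$-cycle'' is the paper's Lemma \ref{le:2A}, your facial-$4$-cycle argument for cubic polyhedra is its Lemma \ref{le:sc}, and the diameter-two bound $p\le r^2+1$ when $0\notin A$ is its Lemma \ref{le:0A}. The gap is in necessity. Your only device for reducing the exceptional cases to a finite search is that omitting $0$ from the spectrum bounds the order; but the dangerous exceptional type is $\{0,\mu\}$ with $\mu\ge 2$, i.e.\ the case $1\notin A$, and omitting $1$ gives no diameter or order bound whatsoever. A large cubic polyhedron with a quadrangular face has $0,2\in A$, and to rule it out as a $\{0,2\}$-Deza graph you must prove $1\in A$; your ``value coming from triangles'' only applies to quartic and quintic graphs (cubic polyhedra such as the cube are triangle-free), and even there a triangle only shows some pair has \emph{at least} one common neighbour, not that the value $1$ itself is attained. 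The paper needs a separate structural theorem (Proposition \ref{prop:1A}, proved in Appendix \ref{app:c} by a genuine case analysis on the pairwise common neighbours of $N(v)$) stating that $1\in A$ for every regular polyhedron other than the tetrahedron, cube, octahedron and icosahedron; nothing in your proposal substitutes for it, and this is the step on which the uniqueness of the cube, of the three quartic exceptions, and of the icosahedron actually rests.

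Two further omissions. In the quintic case the order bound from $0\notin A$ is $p\le 26$, and there exist $5$-regular polyhedra on $20$ to $26$ vertices, so you must still show $0\in A$ for these (the paper's technical Lemma \ref{le:tec}, Appendix \ref{app:b}) or enumerate them; ``Euler's formula and the heavy triangle count'' give the paper's Proposition \ref{prop:30} (every $5$-regular planar graph has a $4$-cycle, hence $2\in A$) but not this. And the proposition also asserts the classification of planar Deza graphs of connectivity at most $2$ (Table \ref{t:2}), which your proposal does not address at all: the paper devotes half of Appendix \ref{app:a} to cut vertices, $2$-cuts, and the fact that a connected regular $\{0,2\}$-Deza graph has connectivity equal to its degree.
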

In Appendix \ref{app:a}, we will revisit Proposition \ref{prop:1}, and present a proof that makes use of the fine properties of planar, $3$-connected graphs i.e., \textbf{polyhedral graphs}, investigated in this paper.
\begin{figure}[ht]
	\centering
	\begin{subfigure}{0.19\textwidth}
		\centering
		\includegraphics[width=2.cm]{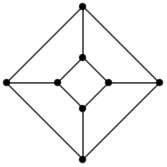}
		\caption{The cube.}
		\label{fig:s1}
	\end{subfigure}
	\hfill
	\begin{subfigure}{0.57\textwidth}
		\centering
		\includegraphics[width=2.cm]{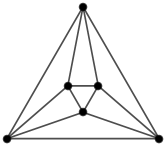}
		\hspace{0.025cm}
		\includegraphics[width=2.cm]{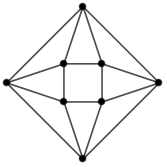}
		\hspace{0.05cm}
		\includegraphics[width=2.cm]{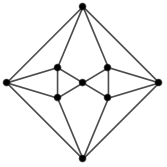}
		\caption{The $4$-regular, planar graphs on up to $9$ vertices.}
		\label{fig:s4}
	\end{subfigure}
	\hfill
	\begin{subfigure}{0.19\textwidth}
		\centering
		\includegraphics[width=2.25cm]{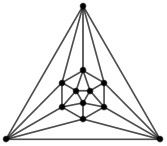}
		\caption{The icosahedron.}
		\label{fig:s5}
	\end{subfigure}
	\caption{The five exceptional polyhedral Deza graphs.}
	\label{fig:s}
\end{figure}

\begin{table}[ht]
	\centering
	$\begin{array}{|c|c|c|}
		\hline \text{Polyhedron }G &\text{regularity}&\text{type of Deza graph}\\
		\hline \text{tetrahedron}&3&\{2\}\\
		\hline \text{cube}&3&\{0,2\}\\
		\hline \text{cubic, no quadrangular faces, even }|V(G)|\geq 10&3&\{0,1\}\\
		\hline \text{octahedron}&4&\{2,4\}\\
		\hline \text{square antiprism -- Figure \ref{fig:s4}, centre}&4&\{1,2\}\\
		\hline \text{Figure \ref{fig:s4}, right}&4&\{1,2\}\\
		\hline \text{quartic, no $4$-cycles, }|V(G)|\geq 30&4&\{0,1\}\\
		\hline \text{icosahedron}&5&\{0,2\}\\
		\hline
	\end{array}$
	\caption{The planar Deza graphs of connectivity at least $3$.}
	\label{t:1}
\end{table}

\begin{table}[ht]
	\centering
	$\begin{array}{|c|c|c|c|}
		\hline \text{Graph}&\text{regularity}&\text{connectivity}&\text{type of Deza graph}\\
		\hline K_1\cup\dots\cup K_1&0&0&\{0\}\\
		\hline K_2\cup\dots\cup K_2&1&0 \text{ or }1&\{0\}\\
		\hline K_3&2&2&\{1\}\\
		\hline C_{i_1}\cup\dots\cup C_{i_\ell},\ \forall j: i_j\neq 4&2&0 \text{ or }2&\{0,1\}\\
		\hline C_{4}\cup\dots\cup C_{4}&2&0 \text{ or }2&\{0,2\}\\
		\hline \text{disjoint union of tetrahedra and/or cubes}&3&0&\{0,2\}\\
		\hline \text{disjoint union of icosahedra}&5&0&\{0,2\}\\
		\hline \text{cubic, no $4$-cycles}&3&0,1 \text{ or }2&\{0,1\}\\
		\hline \text{quartic, no $4$-cycles}&4&0,1 \text{ or }2&\{0,1\}\\
		\hline
	\end{array}$
	\caption{The planar Deza graphs of connectivity at most $2$.}
	\label{t:2}
\end{table}

A graph is (the $1$-skeleton i.e., the wireframe of) a polyhedral solid if and only if it is planar and $3$-connected, a result due to Rademacher and Steinitz \cite{steinitz2013vorlesungen}. Thus we will refer to these graphs simply as `polyhedra'. They are also called $3$-polytopes in the literature. The study of regular polyhedra has fascinated scholars for millennia, and it was already known to the ancient Greeks that there are five vertex-regular and face-regular polyhedra, namely the Platonic solids. Interestingly, this result may be proven using only graph theory \cite{hahimo}.

A planar graph is $2$-connected if and only if it has a planar immersion such that every region is delimited by a cycle (polygon). A planar, $2$-connected graph is $3$-connected if and only if every pair of distinct regions intersects in either the empty set, or a vertex, or an edge. Equivalently, a planar graph is $3$-connected if and only if each region is bounded by a cycle, and if two distinct regions have two common vertices, then these vertices are adjacent \cite[Section 4]{dieste}. We will call the regions of a polyhedron `faces', since they correspond in a natural way to the faces of the relevant polyhedral solid. We will denote by $[u_1,u_2,\dots,u_n]$ the face bounded by the cycle $u_1,u_2,\dots,u_n$.

Another way to identify a polyhedral solid with a graph is to define a vertex for each polyhedral face, and edges between adjacent faces of the solid. This defines the {\bf dual polyhedron} of the original one. The planar dual graph $G^*$ is defined more generally for any plane graph $G$, however it depends on the plane immersion of $G$, and $G^*$ is not necessarily a simple graph, as it may have multiple edges and/or loops. Notably, another fine property of the class of polyhedra is that the dual is unique, is always a simple graph, and is always another polyhedron. For instance, the duals of the cubic polyhedra are the triangulations of the sphere (maximal planar graphs). Indeed, a polyhedron has a unique immersion in the sphere (and hence a unique immersion in the plane, once an external region has been chosen), an observation due to Whitney \cite{whit32}. Thus it makes sense to talk about \textit{the} planar immersion of a polyhedron. Several recent papers deal with characterising and/or constructing a specific class of polyhedral graphs \cite{eppstein2021polyhedral,cui2021tight,costalonga2021constructing,mafpo1,mafpo2,zamfirescu2023hamiltonicity,gaspoz2024independence,delmaf,maffucci2024classification,maffucci2024rao,maffucci2024characterising,hollowbread2025generation}, and a few deal specifically with regular polyhedra \cite{brin05,hasheminezhad2011recursive, mafpo3,de2024cancellation,lo2025shortness}.

For the class of polyhedra, the property of being a Deza graph determines a subclass of regular polyhedral graphs that are in this sense `the most regular'. Note that the Platonic solids are all Deza graphs -- the tetrahedron and dodecahedron also verify the conditions of Proposition \ref{prop:1}, as they are cubic with no quadrangular faces.

We point out that requiring a $4$-regular polyhedron to have no $4$-cycles is stronger than requiring it to have no quadrangular faces. For instance, the pentagonal antiprism has no quadrangular faces but it contains $4$-cycles (so that by Proposition \ref{prop:1} the pentagonal antiprism is not a Deza graph). 


\subsection{Main results}
Henceforth for a graph $G$ we will use the notations
\[N(u):=\{\text{neighbours of } u\},
\]
\[N(u,v):=\{\text{common neighbours of } u,v\},
\]
where $u\neq v\in V(G)$, and
\[A=A(G):=\{i : \exists u\neq v\in V(G) \text{ with } |N(u,v)|=i\},\]
suppressing the dependence on $G$ when there is no risk of confusion. We will say that $G$ is `of type $A$'. In particular, a regular graph $G$ is a Deza graph if and only if $|A|\leq 2$.

Our first main result is a stronger version of Proposition \ref{prop:1} for polyhedra, classifying all regular polyhedra according to their type.
\begin{thm}
	\label{thm:35r}
	Let $G$ be an $r$-regular polyhedron other than the tetrahedron and those in Figure \ref{fig:s}. If $r=3$, then either $A=\{0,1,2\}$ or $A=\{0,1\}$, depending on whether $G$ contains a quadrangular face or not.
	\\
	If $r=4$, then $A=\{0,1\}$ if and only if $G$ does not contain a $4$-cycle, else $A=\{0,1,2\}$ if and only if $G$ does not contain a subgraph isomorphic to $K(2,3)$, else $A=\{0,1,2,3\}$.
	\\
	If $r=5$, then
	$A\in\{\{0,1,2\}, \{0,1,2,3\}, \{0,1,2,4\}, \{0,1,2,3,4\}\}$.
\end{thm}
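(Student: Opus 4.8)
The plan is to translate the invariant $A(G)$ into local subgraph conditions, dispose of the Deza range with Proposition \ref{prop:1}, and then determine $A$ exactly for the non-Deza regular polyhedra by combining a global bound on $\max A$ with three realization statements. First I would record the dictionary: a pair $u\neq v$ has $|N(u,v)|=k$ precisely when $u,v$ together with $k$ of their common neighbours span a copy of $K(2,k)$ whose size-$2$ part is $\{u,v\}$; in particular $\max A\ge 2$ if and only if $G$ contains a $4$-cycle, since any two common neighbours $w_1,w_2$ of $u,v$ yield the $4$-cycle $[u,w_1,v,w_2]$ and conversely. Using Proposition \ref{prop:1} I then peel off the Deza cases: a cubic (resp.\ quartic) polyhedron with no quadrangular face (resp.\ no $4$-cycle) is Deza of type $\{0,1\}$, matching Theorem \ref{thm:35r}, so it remains to treat the $G$ that are \emph{not} Deza, for which $|A|\ge 3$, together with the quintic polyhedra.

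Next I would prove the global bound $\max A\le r-1$ off the exceptions by a saturation argument. If $|N(u,v)|=r$ then $N(u)=N(v)=\{w_1,\dots,w_r\}$ and both $u,v$ use up all their edges; reading the rotation at $u$, the region bounded by each $4$-cycle $[u,w_i,v,w_{i+1}]$ can attach to the rest of $G$ only through $w_i,w_{i+1}$, so $3$-connectivity forces every such region to be empty or to contain exactly the single chord $w_iw_{i+1}$. The residual degree $r-2$ of each $w_i$ must be supplied by the at most two chords through it, which gives $r\le 4$, and a closure check forces $r=4$ with $G$ the octahedron (a square bipyramid); $r=3$ and $r=5$ are impossible. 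Hence, away from Figure \ref{fig:s}, we get $\max A\le 2$ for $r=3$, $\max A\le 3$ for $r=4$, and $\max A\le 4$ for $r=5$, so $5\notin A$ always. For $r=4$ the same bound gives $3\in A$ if and only if $G$ contains $K(2,3)$, since such a subgraph forces a pair with at least $3$, hence exactly $3$, common neighbours.

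It then remains to realize $0,1,2\in A$ for the relevant graphs. For $0\in A$ I would exhibit a pair at distance at least $3$; a regular polyhedron in which every two vertices have a common neighbour satisfies $|V(G)|\le 1+r+r(r-1)$, and one checks directly that the only such graphs are exactly the excluded small ones (the octahedron, the square antiprism and the nine-vertex graph of Figure \ref{fig:s4}, and the icosahedron). For $2\in A$ I would start from a $4$-cycle, choosing either a facial quadrilateral or two triangles sharing an edge, and verify by a short inspection of the lens picture above that its natural diagonal pair has exactly two common neighbours.

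The hard part is $1\in A$, equivalently the claim that a regular polyhedron with $1\notin A$ is the tetrahedron, cube, octahedron, or icosahedron. Here $1\notin A$ means every edge lies in a number of triangles different from $1$ and every distance-$2$ pair shares a number of neighbours different from $1$, a near strong-regularity condition on a planar graph. The approach would be to analyse the faces incident to an edge: if $G$ has a non-triangular face one tries to locate an edge lying in exactly one triangle, giving an adjacent pair with a unique common neighbour; otherwise $G$ is a regular spherical triangulation, forcing the octahedron or icosahedron by the Euler bound, while the triangle-free regular case forces the cube. Controlling the \emph{non-facial} triangles that quartic and quintic polyhedra may carry is the delicate point and, with the realization of the value $2$, is where the bulk of the work lies. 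Once these are in place the three regimes assemble at once: for $r=3$, non-Deza together with $\max A\le 2$ forces $A=\{0,1,2\}$ and the presence of a quadrangular face; for $r=4$, the membership of $3$ toggles $A$ between $\{0,1,2\}$ and $\{0,1,2,3\}$ exactly as $K(2,3)$ is absent or present; and for $r=5$, $\{0,1,2\}\subseteq A\subseteq\{0,1,2,3,4\}$ leaves precisely the four listed sets.
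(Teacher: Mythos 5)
Your architecture mirrors the paper's: bound $\max A\le r-1$ by a saturation argument at a pair with $N(u)=N(v)$ (the paper's Lemma \ref{le:0123}), realize $0$ via the $|V(G)|\le r^2+1$ bound (Lemma \ref{le:0A}), tie $2\in A$ to $4$-cycles (Lemma \ref{le:2A}), and reduce everything else to showing $1\in A$ off the Platonic solids (Proposition \ref{prop:1A}). But as written the proposal has genuine gaps precisely at the points where the paper invests its real work. First, the claim that $1\in A$ for every regular polyhedron other than the tetrahedron, cube, octahedron and icosahedron is only a plan: you yourself flag controlling non-facial triangles as ``where the bulk of the work lies'' and do not carry it out. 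The paper's Appendix \ref{app:c} is a full case analysis over $r=3,4,5$ resting on the cutset Lemma \ref{le:sc}; without it the theorem is unproved in every non-Deza case. Second, the finite verification hidden behind ``one checks directly'' is substantial: for $r=4$ it means all quartic polyhedra on up to $17$ vertices (the paper implements the recursive construction of \cite{brin05} by computer), and for $r=5$ all quintic polyhedra on up to $26$ vertices (the paper instead proves Lemma \ref{le:tec} in Appendix \ref{app:b}). Your list of the exceptions is also wrong: the icosahedron has antipodal vertices at distance $3$, so $0\in A$ there (it is of type $\{0,2\}$, cf.\ Table \ref{t:1}).

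Third, for $r=5$ you need $2\in A$ unconditionally, i.e.\ that every $5$-regular planar graph contains a $4$-cycle; you start ``from a $4$-cycle'' without establishing that one exists. This is exactly Proposition \ref{prop:30}, proved by an Euler-formula count of edges lying on two triangular regions, and without it $A=\{0,1\}$ is not excluded for quintic polyhedra. Relatedly, your justification of $2\in A$ is false as stated: the diagonal pair of a chosen $4$-cycle may have $3$ or $4$ common neighbours (e.g.\ inside a $K(2,3)$), so ``exactly two'' requires the more careful argument behind Lemma \ref{le:2A}, which the paper imports from earlier work rather than re-deriving from a single lens picture. Finally, leaning on Proposition \ref{prop:1} is delicate here, since within this paper Proposition \ref{prop:1} is itself derived from Theorem \ref{thm:35r}; the Deza cases you peel off with it follow anyway from Lemmas \ref{le:0123}, \ref{le:0A} and \ref{le:2A}, so that step is harmless but should be rerouted to avoid circularity.
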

Theorem \ref{thm:35r} will be proven in Section \ref{sec:pf}. It will be the main ingredient in our proof of Proposition \ref{prop:1} (Appendix \ref{app:a}).

There are thus two main classes of polyhedral Deza graphs. One is given by the well-understood and studied class of cubic polyhedra with no quadrangular faces (duals of maximal planar graphs with no vertices of degree $4$). The other class is given by the quartic polyhedra with no $4$-cycles. These are more intriguing and less well-understood. We will uncover their fine structural properties next.

\begin{thm}
	\label{thm:4r}
	Let $G$ be a $4$-regular polyhedron other than those in Figure \ref{fig:s4}. Then $G$ is a Deza graph if and only if $G$ is the medial graph of $H$, where $H$ is a polyhedron with no vertices of degree $4$, no quadrangular faces, and no vertices of degree $3$ lying on a triangular face.
\end{thm}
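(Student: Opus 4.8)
The plan is to prove both directions of the equivalence by leveraging the medial graph construction and its interaction with the characterization from Theorem~\ref{thm:35r}. Recall that the medial graph $M(H)$ of a polyhedron $H$ has one vertex for each edge of $H$, with two such vertices adjacent precisely when the corresponding edges of $H$ share a vertex and are consecutive in the cyclic order around a common face. The medial graph of any polyhedron is automatically $4$-regular and planar, and one checks it is $3$-connected, hence a polyhedron. Its faces come in two families: those corresponding to vertices of $H$ (a face for each vertex $v$, bounded by a cycle of length $\deg_H(v)$) and those corresponding to faces of $H$ (a face for each face $f$, bounded by a cycle of length equal to the size of $f$). This two-colourability of the faces of $M(H)$ will be the structural backbone.

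For the forward direction, I would start with a $4$-regular polyhedral Deza graph $G$ not among the exceptions in Figure~\ref{fig:s4}. By Theorem~\ref{thm:35r}, being a Deza graph forces $A(G)=\{0,1\}$, i.e.\ $G$ has no $4$-cycle. The first key step is to show that a $4$-regular polyhedron with no $4$-cycle \emph{is} a medial graph, i.e.\ arises as $M(H)$ for some polyhedron $H$. The standard recognition principle is that $M(H)$ is exactly a $4$-regular polyhedron whose faces admit a proper $2$-colouring (equivalently, whose medial structure lets one recover $H$ by placing a vertex in each face of one colour class and joining two such vertices across each degree-$4$ vertex of $G$). I would argue that the absence of $4$-cycles guarantees this $2$-colourability: in a $4$-regular plane graph the four faces meeting at each vertex alternate, and an obstruction to $2$-colouring the faces corresponds to an odd structure that, together with $4$-regularity, would produce a short cycle; ruling out $4$-cycles eliminates the minimal obstructions. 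Once $H$ is reconstructed, I would translate the three forbidden configurations in $G$ into the three conditions on $H$: a vertex of degree $4$ in $H$ yields a quadrilateral face in $M(H)$ hence a $4$-cycle in $G$; a quadrangular face of $H$ likewise yields a $4$-cycle; and a degree-$3$ vertex of $H$ incident to a triangular face yields a $4$-cycle in $M(H)$ through the interplay of the triangle's edges and the vertex's three edges. So each forbidden feature of $H$ is equivalent to a $4$-cycle of $G$, and the no-$4$-cycle condition is exactly the conjunction of the three conditions on $H$.

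For the converse, I would take $H$ a polyhedron satisfying the three conditions and set $G=M(H)$, which is automatically a $4$-regular polyhedron. It remains to verify $G$ has no $4$-cycle and is not one of the Figure~\ref{fig:s4} graphs, whence Theorem~\ref{thm:35r} delivers that $G$ is a Deza graph of type $\{0,1\}$. Here I would enumerate the possible origins of a $4$-cycle in a medial graph: a $4$-cycle in $M(H)$ must either bound one of the two families of faces (forcing $\deg_H(v)=4$ or a quadrangular face of $H$), or it must be a non-facial $4$-cycle, which a case analysis shows forces a degree-$3$ vertex of $H$ on a triangle. Since all three possibilities are excluded by hypothesis, $G$ has no $4$-cycle. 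The main obstacle, and the part requiring the most care, is the recognition step in the forward direction: proving that every $4$-regular polyhedron without $4$-cycles genuinely arises as a medial graph. This is where the $3$-connectivity and planarity of $G$ must be used delicately to establish the face $2$-colouring and to check that the reconstructed $H$ is itself a bona fide polyhedron (simple, planar, $3$-connected) rather than merely a plane multigraph; controlling that no multiple edges or loops appear in $H$ is precisely where the hypotheses on $G$, and the exclusion of the small sporadic graphs, do their work.
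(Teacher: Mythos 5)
Your overall architecture (translate the three conditions on $H$ into $4$-cycles of $G$, and conversely) matches the paper's, but the forward direction has a genuine gap at exactly the step you flag as delicate, and the criterion you propose to close it is wrong. You assert that ``$M(H)$ is exactly a $4$-regular polyhedron whose faces admit a proper $2$-colouring'' and that the absence of $4$-cycles is what guarantees this $2$-colourability. In fact \emph{every} $4$-regular plane graph has $2$-colourable faces: it is Eulerian, so its dual is bipartite. The $2$-colouring therefore costs nothing and detects nothing. The real obstruction to being a medial graph of a \emph{polyhedron} is elsewhere: when you reconstruct $H$ from one colour class, the result can fail to be simple or fail to be $3$-connected. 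The paper itself exhibits the failure of your criterion: Proposition \ref{prop:0123} shows that $4$-regular polyhedra of type $\{0,1,2,3\}$ exist and are \emph{never} medial graphs of polyhedra, even though, being $4$-regular and plane, their faces are $2$-colourable.

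The correct recognition statement, which the paper takes from \cite{mafpo3}, is that $G=\cm(H)$ for a polyhedron $H$ if and only if $G^*$ is a $3$-connected quadrangulation of the sphere with no \emph{separating} $4$-cycles. So the substantive content of the forward direction is proving that a quartic polyhedral Deza graph has a dual with no separating $4$-cycles. The paper does this via the recursive generation of $3$-connected quadrangulations in \cite[Theorems 3 and 4]{brin05}: any separating $4$-cycle in $G^*$ forces an application of transformation $\mathcal{B}$, which leaves behind a quadrangular face of $G^*$ all of whose vertices have degree $3$, i.e.\ a pair of adjacent triangular faces in $G$, i.e.\ a $4$-cycle in $G$ --- contradicting the Deza hypothesis. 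Your proposal contains no substitute for this argument; it only records that care is needed. Until you supply a proof that no-$4$-cycles in $G$ rules out separating $4$-cycles in $G^*$ (or, equivalently, that the reconstructed $H$ is simple and $3$-connected), the forward direction is not established. The converse direction of your sketch is closer to the paper's, though the paper disposes of non-facial $4$-cycles in $\cm(H)$ not by locating a degree-$3$ vertex of $H$ on a triangle but by deriving a contradiction with $G^*=\calr(H)$ being a bipartite quadrangulation without separating $4$-cycles (the two configurations of Figure \ref{fig:pf18}); your stated case analysis would need to be reworked along those lines.
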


Theorem \ref{thm:4r} will be proven in Section \ref{sec:4r}. The {\bf medial graph} of a polyhedron $\Gamma$ is defined by
\[V(\cm(\Gamma))=E(\Gamma)\]
and
\[E(\cm(\Gamma))=\{(e,e') : e,e' \text{ are consecutive edges on a face of $\Gamma$}\}.\]
The medial graph of a polyhedron is itself a polyhedron \cite[Lemma 2.1]{arcric}.

By definition, the medial graph is a spanning subgraph of the line graph. For any graph $\Gamma$, its line graph is defined as
\[V(\cl(\Gamma))=E(\Gamma)\]
and
\[E(\cl(\Gamma))=\{(e,e') : e,e' \text{ are incident edges in $\Gamma$}\}.\]
A line graph is not necessarily planar. A necessary and sufficient condition for the planarity of the line graph was found by Sedl{\'a}{\v{c}}ek \cite{sedlacek1964some, sedlavcek1990generalized}. All polyhedral line graphs were classified and constructed in \cite{hollowbread2025generation}. For cubic polyhedra, the line graph and medial graph coincide, and the resulting graph is a quartic polyhedron \cite{hollowbread2025generation}.

It is worth noting that the medial graph is the dual of the radial graph, also called the vertex-face graph. The radial graph is defined by
\[V(\calr(\Gamma))=V(\Gamma)\cup \cf(\Gamma)\]
with $\cf$ the set of faces, and
\[E(\calr(\Gamma))=\{vf : v\in V(\Gamma), \ f\in \cf(\Gamma), \ v\text{ lies on }f\text{ in }\Gamma\}.\]
We also record that
\[\calr(\Gamma)\simeq\calr(\Gamma^*) \quad\text{and}\quad \cm(\Gamma)\simeq\cm(\Gamma^*),\]
where $\Gamma^*$ is the dual polyhedron of $\Gamma$ \cite{mafpo3}.

The dual of a $4$-regular polyhedron is a $3$-connected quadrangulation of the sphere. These were constructed in \cite[Theorem 3]{brin05}. Thanks to this construction, we will be able to prove Theorem \ref{thm:4r}.

We now consider the following problem in extremal graph theory. Let $G$ be a $4$-regular polyhedral Deza graph other than those in Figure \ref{fig:s4}, $f_3(G)$ the number of triangular faces, and $f(G)$ the total number of faces. We will prove that
\begin{equation*}
	\frac{f(G)}{2}+4\leq f_3(G)\leq\frac{2f(G)}{3}-\frac{4}{3}.
\end{equation*}
We wish to describe the subclasses of quartic polyhedral Deza graphs that achieve these bounds. For the upper bound, we have the following fine characterisation.
\begin{thm}
	\label{thm:4max}
	Let $G$ be a $4$-regular polyhedron other than those in Figure \ref{fig:s4}. Then $G$ is a Deza graph with maximal number of triangular faces with respect to the total if and only if $G$ is the line graph of a cubic polyhedron of girth $5$.
\end{thm}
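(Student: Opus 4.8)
The plan is to combine the medial-graph description from Theorem \ref{thm:4r} with an incidence count that proves the announced upper bound while simultaneously pinning down its equality case. By Theorem \ref{thm:4r} we may write $G=\cm(H)$, where $H$ is a polyhedron with no degree-$4$ vertex, no quadrangular face, and no degree-$3$ vertex incident to a triangular face. Each face of $\cm(H)$ is bounded either by the edges around a vertex of $H$ or by the edges of a face of $H$; hence a degree-$d$ vertex of $H$ yields a $d$-gonal face of $G$, and an $s$-gonal face of $H$ yields an $s$-gonal face of $G$. Writing $v_d$ and $f_s$ for the numbers of degree-$d$ vertices and $s$-gonal faces of $H$, this gives $f_3(G)=v_3+f_3$ and $f(G)=|V(H)|+|\cf(H)|$. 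I aim to show that equality in the upper bound forces $H$, up to the duality $\cm(H)\simeq\cm(H^*)$, to be a cubic polyhedron of girth $5$.

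First I would reprove the upper bound through two incidence inequalities that isolate the hypothesis on $H$. Every triangular face of $H$ has all three vertices of degree $\geq 5$ (neither $3$, by hypothesis, nor $4$, as $H$ has no degree-$4$ vertex), so counting (triangular face, vertex) incidences from the two sides yields $3f_3\leq\sum_{d\geq5}d\,v_d$; dually, every face through a degree-$3$ vertex has size $\geq5$ (being neither triangular, by hypothesis, nor, as $H$ has no quadrangular face, quadrilateral), giving $3v_3\leq\sum_{s\geq5}s\,f_s$. The Euler relation, in the charged form $\sum_{v}(\deg v-4)+\sum_{f}(|f|-4)=-8$, reads $v_3+f_3=8+\sum_{d\geq5}(d-4)v_d+\sum_{s\geq5}(s-4)f_s$. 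Adding the two incidence inequalities and substituting this identity collapses the higher-order terms and leaves
\begin{equation*}
	f_3(G)+4\le 2\Big(\textstyle\sum_{d\geq5}v_d+\sum_{s\geq5}f_s\Big)=2\big(f(G)-f_3(G)\big),
\end{equation*}
which rearranges to $f_3(G)\leq\tfrac{2}{3}f(G)-\tfrac{4}{3}$. Equality evidently requires both incidence inequalities to be tight.

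The heart of the argument is the equality analysis. Tightness of the first inequality means every face at a degree-$\geq5$ vertex is a triangle; tightness of the second means every vertex on a face of size $\geq5$ has degree $3$. If $H$ has a single high-degree vertex $v_0$, then all faces at $v_0$ are triangles, each such triangle has only high-degree vertices (by the hypothesis on $H$), so all neighbours of $v_0$ are high-degree; propagating through the connected graph forces every vertex to have degree $\geq5$ and every face to be a triangle, i.e.\ $H$ is a triangulation of minimum degree $5$. Otherwise $H$ is cubic, and the hypothesis then forbids any triangular face. These two extremes are interchanged by $H\mapsto H^*$, so in either case $\cm(H)\simeq\cm(\Gamma)$ for a \emph{cubic} polyhedron $\Gamma$ (namely $H$ or $H^*$); and for cubic $\Gamma$ the medial and line graphs coincide, so $G=\cl(\Gamma)$. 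Finally, because $G$ is a Deza graph it has no $4$-cycle, and a short analysis of $4$-cycles in $\cl(\Gamma)$ shows these arise exactly from a triangle or a quadrilateral of $\Gamma$; hence $\Gamma$ has girth $\geq5$, and since a cubic polyhedron always has a face of size $\leq5$, its girth is exactly $5$.

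For the converse I would take $G=\cl(\Gamma)$ with $\Gamma$ cubic of girth $5$: then $\Gamma$ has no triangular or quadrangular face, so $G=\cm(\Gamma)$ is a Deza graph by Theorem \ref{thm:4r}, and the direct count $f_3(G)=|V(\Gamma)|$, $f(G)=\tfrac32|V(\Gamma)|+2$ gives equality in the bound. The main obstacle is precisely the equality analysis: one must resist concluding prematurely, since the triangulation extreme (for instance $H$ the icosahedron) looks like a counterexample—its medial graph is not \emph{visibly} a line graph—until one passes to the cubic dual $H^*$ and invokes $\cm(H)\simeq\cm(H^*)$. Getting the propagation argument and this duality bookkeeping right is where the real care is needed; the incidence count and the converse computation are comparatively routine.
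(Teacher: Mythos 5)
Your argument is correct, and its skeleton coincides with the paper's: both pass through Theorem \ref{thm:4r} to write $G=\cm(H)$, extract from the equality case the dichotomy ``$H$ cubic with no face of length $\le 4$'' versus ``$H$ a triangulation of minimum degree $5$'', and merge the two branches via $\cm(H)\simeq\cm(H^*)$. Where you genuinely diverge is in the counting that produces the bound and its equality condition. The paper (Proposition \ref{prop:1223}) works on $G$ itself: it classifies edges of $G$ by the number of incident triangular faces, uses the Deza hypothesis to force $q_2=0$, obtains $q_1(G)=q(G)$ at the maximum, and then translates this into a linear system on $H$ in the quantities $d_2,d_1,t_2,t_1,o$ whose only solution is $d_1=t_1=o=0$. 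You instead count vertex--face incidences in $H$ directly, via $3f_3\le\sum_{d\ge5}d\,v_d$ and $3v_3\le\sum_{s\ge5}s\,f_s$ together with the discharged Euler identity; tightness of these two inequalities hands you the local structure (every face at a high-degree vertex is a triangle; every vertex on a long face is cubic), which you then propagate by connectivity exactly as the paper propagates its edge-type dichotomy. Your route is more symmetric in $H\leftrightarrow H^*$ and makes the role of each hypothesis on $H$ transparent, at the price of re-deriving the upper bound rather than quoting it; the paper's route has the advantage that the bound is established for all quartic polyhedral Deza graphs before any structure theorem is invoked. The converse computations are equivalent. One minor simplification available to you: in the cubic branch you deduce girth $5$ from the absence of $4$-cycles in $G=\cl(\Gamma)$, which works, but since in a cubic polyhedron other than $K_4$ every $3$- and $4$-cycle is facial, ``no triangular or quadrangular faces'' already yields girth $\ge 5$ without returning to $G$.
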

Theorem \ref{thm:4max} will be proven in Section \ref{sec:4max}. The girth of a graph is the length of its shortest cycle. Hence in the statement of Theorem \ref{thm:4max}, we could have replaced the words `of girth $5$' with `of minimal face length $5$'. Furthermore, since we are dealing with cubic polyhedra, we could have replaced the words `line graph' with the words `medial graph'.

For the other extreme to Theorem \ref{thm:4max}, the characterisation is somewhat less clear.
\begin{prop}
	\label{prop:4min}
	Let $G$ be a $4$-regular polyhedron other than those in Figure \ref{fig:s4}. Then $G$ is a Deza graph with minimal number of triangular faces with respect to the total if and only if $G$ is the medial graph of a polyhedron $H$ satisfying the following conditions. Calling $p,q,f$ the number of vertices, edges, and faces of $H$, and $p_i,f_i$ the number of vertices of degree $i$ and faces of length $i$ respectively, then
	\[p=p_3+p_5, \qquad f=f_3+f_5, \qquad p_3+f_3=\frac{q}{2}+5,\]
	and each vertex of degree $3$ lies on three pentagonal faces.
\end{prop}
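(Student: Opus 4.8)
The plan is to leverage Theorem~\ref{thm:4r}, which already tells us that every $4$-regular polyhedral Deza graph $G$ (other than the exceptions) is the medial graph $\cm(H)$ for a polyhedron $H$ having no vertices of degree $4$, no quadrangular faces, and no degree-$3$ vertex on a triangular face. So the task reduces to translating the extremal condition ``minimal $f_3(G)/f(G)$'' into conditions on $H$. First I would set up the dictionary between faces of $G=\cm(H)$ and the combinatorics of $H$. Under the medial construction, the faces of $\cm(H)$ come in two flavours: one face for each vertex of $H$ (of length equal to that vertex's degree) and one face for each face of $H$ (of length equal to that face's length). Triangular faces of $G$ therefore correspond exactly to degree-$3$ vertices of $H$ together with triangular faces of $H$, giving $f_3(G)=p_3+f_3$, while the total is $f(G)=p+f=(p_3+p_5+\cdots)+(f_3+f_5+\cdots)$, using that $H$ has no degree-$4$ vertices and no quadrangular faces.

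Next I would compute the lower bound $f_3(G)\ge f(G)/2+4$ explicitly in terms of $H$ and identify exactly when equality holds. The clean way is to invoke Euler's formula for $H$ in the form $p-q+f=2$, and to combine it with the handshake/degree counts $\sum_i i\,p_i=2q=\sum_i i\,f_i$. Because $G$ is quartic with $V(G)=E(H)$, we have $|V(G)|=q$, and the edge and face counts of $G$ follow from $4$-regularity plus Euler for $G$; in particular $f(G)=q+2$. Rewriting the target inequality $f_3(G)\ge f(G)/2+4$ as $p_3+f_3\ge (q+2)/2+4=q/2+5$ shows that the displayed equality condition $p_3+f_3=q/2+5$ is precisely the statement that $G$ attains the minimum. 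So the substantive analytic content is to verify that $q/2+5$ really is the minimum possible value of $p_3+f_3$ over all admissible $H$, which should drop out of Euler's formula once the degree and face-length constraints (no $p_2,p_4$, no $f_1,f_2,f_4$, minimum degree $3$, minimum face length $3$) are imposed; the inequality is saturated exactly when the ``excess'' terms counting vertices of degree $\ge 6$ and faces of length $\ge 6$ all vanish in the appropriate weighted combination.

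The remaining, and I expect the most delicate, step is to pin down the extra structural condition that ``each vertex of degree $3$ lies on three pentagonal faces,'' and to show that together with the numerical equality and $p=p_3+p_5$, $f=f_3+f_5$ it is both necessary and sufficient. Necessity should follow by carefully tracking which configurations in $H$ force an \emph{avoidable} triangular face in $G=\cm(H)$: since $H$ has no degree-$3$ vertex on a triangular face (by Theorem~\ref{thm:4r}), a degree-$3$ vertex $v$ of $H$ has all incident faces of length $\ge 4$; minimality of $f_3(G)$ should then squeeze each such face down to the smallest admissible length that does not create a quadrangular face in $H$ nor an extra triangle in $G$, which is $5$. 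Proving that degree-$3$ vertices must be surrounded by pentagons, rather than by a mix of pentagons and longer faces, is the crux: I would argue by a local exchange/counting argument showing that replacing a longer incident face by a pentagon strictly decreases the triangle ratio, or dually by re-deriving the equality $p_3+f_3=q/2+5$ and observing which face lengths are compatible with it given $f=f_3+f_5$. Sufficiency is then the easier direction: assuming all the listed numerical and local conditions, one checks directly that $\cm(H)$ satisfies the hypotheses of Theorem~\ref{thm:4r} (so it is a Deza graph) and that $f_3(G)=p_3+f_3=q/2+5=f(G)/2+4$ realises the lower bound. The main obstacle throughout will be disentangling the two independent contributions $p_3$ and $f_3$ in the extremal count and ruling out degree-$3$ vertices incident to hexagonal-or-longer faces, since the global Euler relation alone does not immediately localise to individual vertices.
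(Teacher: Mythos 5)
Your proposal follows essentially the same route as the paper: reduce to $G=\cm(H)$ via Theorem~\ref{thm:4r}, translate triangular faces of $G$ into degree-$3$ vertices and triangular faces of $H$ through the medial/radial dictionary so that the extremal condition becomes $p_3+f_3=q/2+5$, and observe that saturation of the Euler/handshake bound $f_3(G)\ge f(G)/2+4$ forces the excess terms $(i-5)f_i(G)$, $i\ge 6$, to vanish individually. The only point where you overestimate the difficulty is your ``crux'': once saturation gives $f_i(G)=0$ for all $i\ge 6$, every face of $H$ has length $3$ or $5$ and every vertex has degree $3$ or $5$, so the fact that each degree-$3$ vertex of $H$ lies on three pentagons is immediate from the Theorem~\ref{thm:4r} condition that no degree-$3$ vertex lies on a triangular face (and $H$ has no quadrangular faces) --- no local exchange argument is needed, and the global relation does localise because each excess term is individually nonnegative.
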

Proposition \ref{prop:4min} will be proven in Section \ref{sec:4min}. It would be nice to have a more precise characterisation for the $4$-regular polyhedral Deza graphs with minimal number of triangular faces.

Our next result concerns $4$-regular polyhedra of type $\{0,1,2,3\}$. To state it we need a preliminary definition.
\begin{defin}
	\label{def:1}
Let $G_1$ be a $4$-regular polyhedron, and $v=v(G_1),u=u(G_1),w=w(G_1)$ be consecutive vertices in this order on a face of $G_1$. We define
\begin{equation}
	\label{eq:prime}
G_1'=G_1'(u,v,w):=G_1+x+y+z+xu+yz+zu+xy+xv+yw-uv-uw,
\end{equation}
where $x=x(G_1)$, $y=y(G_1)$, and $z=z(G_1)$ are three new vertices. Moreover, for $G_2$ another $4$-regular polyhedron, we define
\[\ct(G_1,G_2)\]
(suppressing the dependency on $u(G_1),v(G_1),w(G_1),u(G_2),v(G_2),w(G_2)$) by taking $G_1'$ and $G_2'$ and then identifying $x(G_1)$ with $z(G_2)$, $y(G_1)$ with $y(G_2)$, and $z(G_1)$ with $x(G_2)$.
\end{defin}

The construction is illustrated in Figure \ref{fig:0123t}. The resulting $G:=\ct(G_1,G_2)$ is a planar, $4$-regular graph. It is easy to check that $G$ is $3$-connected. It satisfies
\[N(x(G_1),x(G_2))=\{y,u(G_1),u(G_2)\},\]
where $y=y(G_1)=y(G_2)$, thus $3\in A(G)$. By Theorem \ref{thm:35r}, $A(G)=\{0,1,2,3\}$. In the other direction, we will prove that all $4$-regular polyhedra of type $A(G)=\{0,1,2,3\}$ that do not contain a square pyramid are constructed as in Definition \ref{def:1}.
\begin{figure}[ht]
	\centering
	\includegraphics[width=10.cm]{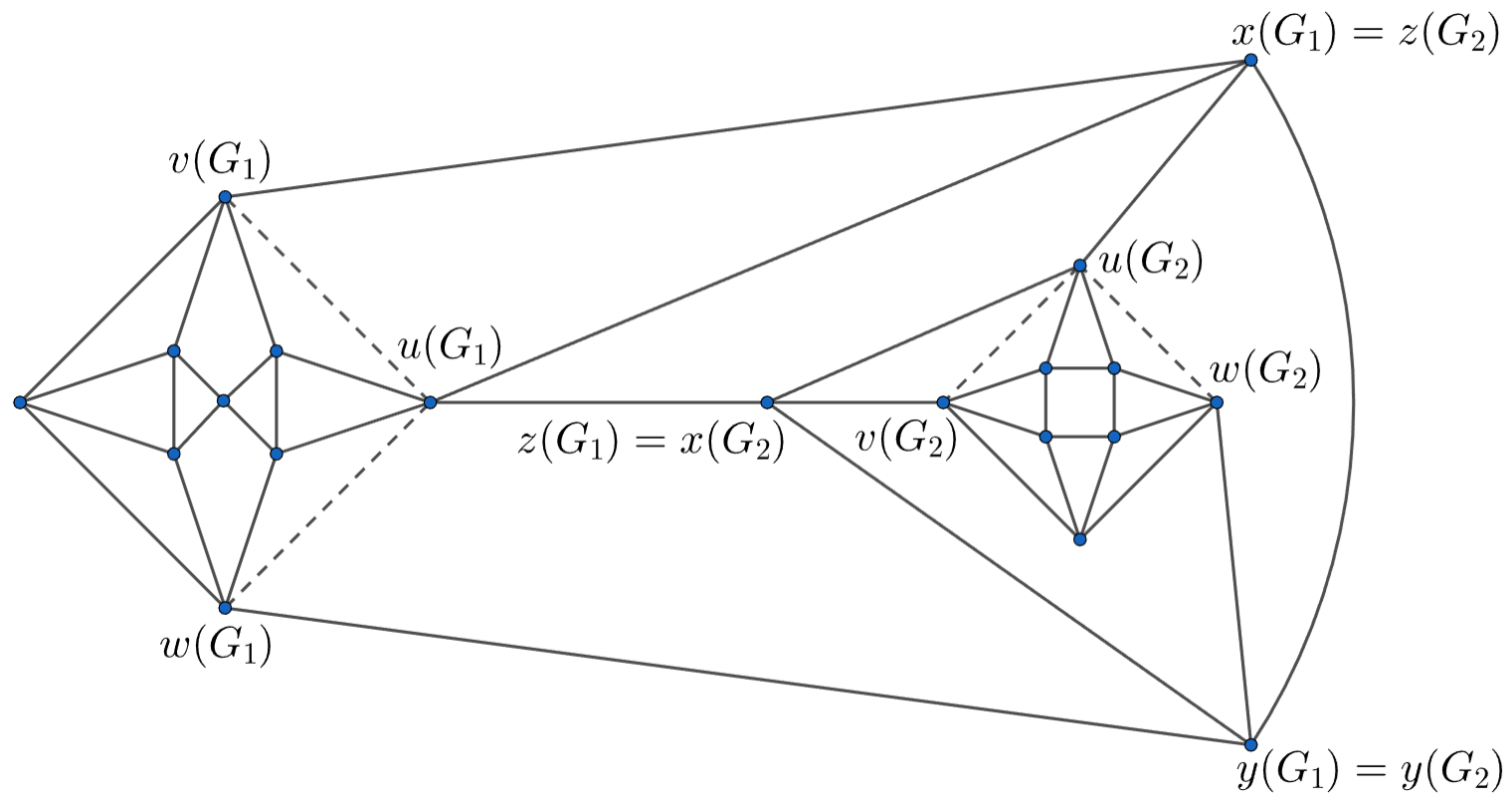}
	\caption{Illustration of $\ct(G_1,G_2)$, where $G_1,G_2$ are the third and second graphs in Figure \ref{fig:s4}. Dashed edges are deleted.}
	\label{fig:0123t}
\end{figure}

\begin{prop}
	\label{prop:0123}
Let $G$ be a $4$-regular polyhedron satisfying \[A(G)=\{0,1,2,3\}.\]
Then either $G$ contains a square pyramid as subgraph, or there exist $4$-regular polyhedra $G_1,G_2$ such that
\[G\simeq\ct(G_1,G_2).\]
\\
Furthermore, $G$ is not the medial graph of a polyhedron.
\end{prop}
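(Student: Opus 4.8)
The plan is to establish the structural dichotomy first and then the non-mediality, using two inputs. The first is Theorem~\ref{thm:35r}: since $A(G)=\{0,1,2,3\}$, the graph $G$ contains a subgraph $K(2,3)$, i.e.\ there are vertices $a,b$ with three common neighbours $c_1,c_2,c_3$, and as $G$ is $4$-regular we have $N(a)=\{c_1,c_2,c_3,a'\}$, $N(b)=\{c_1,c_2,c_3,b'\}$. The second input, which I would record as a lemma, reformulates ``being a medial graph'': because the dual $G^{*}$ is a connected bipartite quadrangulation, the faces of $G$ carry a proper $2$-colouring that is unique up to swapping the colours, and $G\simeq\cm(H)$ for a polyhedron $H$ if and only if the graph reconstructed from one colour class — vertices being the faces of that colour, with one edge for each vertex $v$ of $G$ joining the two like-coloured faces incident to $v$ — is simple and $3$-connected (the other colour class then reconstructs $H^{*}$). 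Thus to prove $G$ is not a medial graph it suffices to exhibit, in this reconstruction, a repeated edge or a $2$-separator.

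For the dichotomy I would argue in the unique planar embedding, where the three paths $a\,c_i\,b$ bound three regions. The case $a\sim b$ is impossible: then $a,b$ are saturated, so the edge $ab$ subdivides one region into two triangular faces while each of the other two regions is a quadrilateral $a\,c_i\,b\,c_j$ whose interior (nonempty, since $c_i$ has further neighbours) attaches to the rest of $G$ only through $c_i,c_j$, contradicting $3$-connectivity. Assuming $a\not\sim b$, I classify by the edges among $c_1,c_2,c_3$. If some $c_i$ is joined to the other two, then $c_i$ is saturated with neighbours $a,c_j,b,c_k$, and since $a\,c_j\,b\,c_k$ is a $4$-cycle, $c_i$ is the apex of a square pyramid — the first alternative. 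A short case analysis eliminates the patterns with exactly one edge or all three edges among the $c_i$ (each forcing a $2$-separator or a fourth common neighbour). In the remaining case $c_1,c_2,c_3$ are pairwise non-adjacent, exactly one of them, say $c_2$, sends an edge into each of two otherwise separated parts of $G$, and $\{a,b,c_2\}$ is a $3$-separator; comparing with Definition~\ref{def:1} (identifying $\{a,b\}$ with $\{x(G_1),x(G_2)\}$ and $\{c_1,c_2,c_3\}$ with $\{u(G_1),y,u(G_2)\}$), I recover $G_1,G_2$ by cutting along $\{a,b,c_2\}$ and, on each side, deleting the three gadget vertices and reinstating the edges $uv,uw$ as in \eqref{eq:prime}.

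For the non-mediality, suppose $G\simeq\cm(H)$ with $H$ a polyhedron and apply the reconstruction lemma to each alternative. In the case $G=\ct(G_1,G_2)$, the relevant face is the central quadrilateral $Q=[a,c_1,b,c_3]$; computing the $2$-colouring near the gadget, $Q$ and the two faces opposite to it at $a$ and at $b$, say $F_a$ (on the $G_1$ side) and $F_b$ (on the $G_2$ side), receive the same colour. Tracking which vertices of $G$ produce edges of the reconstruction, the only edge directly joining a $G_1$-side face to a $G_2$-side face is $F_aF_b$, contributed by $c_2$, while all remaining links between the two sides pass through the vertex $Q$; hence $\{Q,F_a\}$ is a $2$-separator and the reconstructed graph is not $3$-connected, so $G$ is not a medial graph. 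In the square-pyramid case, the apex is an edge $e=\alpha\beta$ of $H$ with $\deg\alpha=\deg\beta=3$ and both incident faces triangular, with third vertices $\gamma,\delta$; then $N_H(\alpha)=\{\beta,\gamma,\delta\}$ and $N_H(\beta)=\{\alpha,\gamma,\delta\}$, so deleting $\gamma,\delta$ isolates $\{\alpha,\beta\}$ and $3$-connectivity forces $H$ to be the tetrahedron. But then $G$ is the octahedron, of type $\{2,4\}$, contradicting $A(G)=\{0,1,2,3\}$. Either way $G$ is not a medial graph.

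The main obstacle is the decomposition in the middle step: proving that the two pieces obtained by cutting along $\{a,b,c_2\}$ and reversing \eqref{eq:prime} are again polyhedra. Simplicity and $4$-regularity are immediate from the construction, but $3$-connectivity of $G_1$ and $G_2$ must be deduced from that of $G$ together with the precise local geometry of the gadget, and this is where I expect the real work to lie. The other technical point — pinning down the face $2$-colouring around the gadget well enough to certify that $c_2$ contributes the unique cross edge of the reconstruction — is more routine once the embedding is fixed, and with it the $2$-separator, hence the medial exclusion, follows immediately.
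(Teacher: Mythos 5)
Your proposal follows essentially the same route as the paper: extract the $K(2,3)$ configuration $N(a,b)=\{c_1,c_2,c_3\}$ from the type, split on the edges among the three common neighbours (two edges at a common vertex giving the square pyramid, the pairwise non-adjacent case giving the $\ct$-decomposition by cutting along $\{a,b,c_2\}$ and undoing \eqref{eq:prime}), and refute mediality by reconstructing $H$ from the faces of $G$ and exhibiting a $2$-separator -- indeed your faces $Q$, $F_a$, $F_b$ are exactly the paper's $F$, $F_a$, $F_b$. The only genuine divergence is the square-pyramid half of the non-mediality argument, where you force $H\simeq K_4$ and hence $G$ to be the octahedron (a contradiction with the type), whereas the paper directly exhibits the $2$-cut $\{F_{bc},F_{ae}\}$ in $H$; both work. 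Finally, the step you flag as ``the real work'' -- that the pieces $G_1,G_2$ obtained by reversing \eqref{eq:prime} are $3$-connected -- is asserted in the paper without further argument (``Since $G$ is a $4$-regular polyhedron, so are $G_1,G_2$''), so your caution there identifies a point the paper itself leaves implicit rather than a defect of your own approach.
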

The proof of Proposition \ref{prop:0123} may be found in Section \ref{sec:0123}. To summarise, $4$-regular polyhedra of type $\{0,1\}$ are always medial graphs of polyhedra (as stated in Theorem \ref{thm:4r}), those of type $\{0,1,2\}$ may or may not be medial graphs of polyhedra, whereas those of type $\{0,1,2,3\}$ are never medial graphs of polyhedra (as stated in Proposition \ref{prop:0123}).

\subsection{Context and discussion}
\label{sec:disc}
In \cite{maffucci2025classification} a classification of all polyhedral graphs according to their set $A$ was found. A polyhedron is of type $\{0,1\}$ if and only if it does not contain $4$-cycles. There are vast classes of polyhedra for each of the types $\{0,1,2\}$, $\{0,1,2,3\}$, $\{0,1,2,4\}$, $\{0,1,2,3,4\}$. In the present paper we classify regular polyhedra according to their type, and focus on their fine properties.

In \cite{maffucci2025common}, we generalised the discussion to the set $A_n(G)$, $n\geq 1$ of {\bf quantities of common neighbours for every $n$-tuple of distinct vertices} in a graph $G$. The set $A$ of this paper is accordingly denoted by $A_2$ in \cite{maffucci2025common}. In \cite{maffucci2025common}, all planar graphs were classified according to their set $A_n$, for every $n\geq 1$. This included the completion of the classification of planar graphs according to $A_2$, that was the object of \cite{maffucci2025classification} in the case of $3$-connected planar graphs.

The set $A_n$ may be considered as a version without multiplicities of the $n$\textbf{-degree sequence} of a graph, that lists quantities of common neighbours for every $n$-tuple of distinct vertices, in weakly decreasing order (the $1$-degree sequence is the usual degree sequence). This is a new and promising area of research. We refer the interested reader to \cite[\S 1.4]{maffucci2025classification}.

Furthermore, the results of this paper imply the following general properties of regular planar graphs and regular polyhedra.
\begin{cor}
	\label{cor:1}
For $r\in\{3,4,5\}$, $r$-regular polyhedra do not contain subgraphs isomorphic to $K(2,r)$.
\\
Polyhedral medial graphs (subclass of the $4$-regular polyhedra) do not contain subgraphs isomorphic to $K(2,3)$.
\\
Every $5$-regular planar graph contains a $4$-cycle.
\end{cor}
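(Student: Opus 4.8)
The plan is to reduce each of the three assertions to a statement about the set $A(G)$ and then invoke Theorem \ref{thm:35r}, Proposition \ref{prop:0123}, and the classification of Tables \ref{t:1}--\ref{t:2}. Everything rests on one elementary dictionary. In an $r$-regular graph $G$ any two distinct vertices have at most $r$ common neighbours, and for $k\le r$ a subgraph isomorphic to $K(2,k)$ is present precisely when some pair of vertices has at least $k$ common neighbours: given $a,b$ with common neighbours $c_1,\dots,c_k$ the edges $ac_i$ and $bc_i$ already constitute a copy of $K(2,k)$, and conversely the two vertices on the small side of an embedded $K(2,k)$ share those $k$ neighbours. Consequently $G\supseteq K(2,r)$ if and only if $r\in A(G)$ (the two witnessing vertices then have identical neighbourhoods), $G\supseteq K(2,3)$ if and only if $\max A(G)\ge 3$, and $G$ contains a $4$-cycle if and only if $\max A(G)\ge 2$, since two common neighbours $c,d$ of $a,b$ produce the $4$-cycle $a,c,b,d$ and, conversely, opposite vertices of a $4$-cycle share two common neighbours.

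For the first assertion, let $G$ be an $r$-regular polyhedron with $r\in\{3,4,5\}$. If $G$ is none of the tetrahedron and the graphs of Figure \ref{fig:s}, then Theorem \ref{thm:35r} gives $\max A(G)\le r-1$ in every case, so $r\notin A(G)$ and $G$ contains no $K(2,r)$. The finitely many exceptional regular polyhedra are then settled by reading their types off Table \ref{t:1}: the tetrahedron and cube have $3\notin A$, the square antiprism and the nine-vertex graph have $4\notin A$, and the icosahedron has $5\notin A$. The only genuinely delicate case is the octahedron, whose type $\{2,4\}$ yields $4\in A$ and hence an actual copy of $K(2,4)$ on its two antipodal vertices; it is the unique regular polyhedron for which the conclusion must be read with this exception in mind.

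The second assertion concerns polyhedral medial graphs, which form a subclass of the $4$-regular polyhedra. For such a $G$ away from the exceptions of Figure \ref{fig:s4}, Theorem \ref{thm:35r} already gives $4\notin A(G)$, and Proposition \ref{prop:0123} states that no $4$-regular polyhedron of type $\{0,1,2,3\}$ is a medial graph; since by Theorem \ref{thm:35r} the only way to have $3\in A(G)$ is $A(G)=\{0,1,2,3\}$, it follows that $3\notin A(G)$ as well. Hence $\max A(G)\le 2$, and by the dictionary $G$ has no $K(2,3)$. As before, the octahedron---being the medial graph of the tetrahedron---is the sole exception.

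For the third assertion I would argue by contradiction. Suppose $G$ is a $5$-regular planar graph with no $4$-cycle; passing to a component we may take $G$ connected. By the dictionary no two vertices of $G$ have two or more common neighbours, so $A(G)\subseteq\{0,1\}$ and $G$ is a planar Deza graph (note $1\in A$ because the neighbours of any vertex share it). But Proposition \ref{prop:1} and Tables \ref{t:1}--\ref{t:2} list every planar Deza graph, and none that is $5$-regular has $A\subseteq\{0,1\}$: the only $5$-regular entries are the icosahedron and disjoint unions of icosahedra, both of type $\{0,2\}$. This contradiction proves the claim. The main obstacle in this part is that $G$ need not be $3$-connected, so Theorem \ref{thm:35r} alone is insufficient and one must invoke the full planar Deza classification of Table \ref{t:2}; moreover a forbidden $4$-cycle need not bound a face, so a naive Euler count on face lengths does not detect it, which is precisely why the structural classification is needed.
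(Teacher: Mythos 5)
Your first two assertions are handled exactly as the paper intends (Theorem \ref{thm:35r} for the $K(2,r)$ bound, Proposition \ref{prop:0123} combined with Theorem \ref{thm:35r} for medial graphs), and your dictionary between copies of $K(2,k)$ and pairs with $k$ common neighbours is the right reduction. You are moreover right, and more careful than the corollary's literal wording, in flagging the octahedron: it has type $\{2,4\}$, contains $K(2,4)$, and is the medial graph of the tetrahedron, so both the first and the second assertions need it excluded along with the other exceptional graphs of Figure \ref{fig:s4}.

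The third assertion is where your route diverges from the paper's, and problematically so. The paper derives it directly from Proposition \ref{prop:30}: a connected plane $5$-regular graph with no quadrangular regions satisfies $q_2\geq 30$ by the Euler/handshake count, so it has two adjacent triangular regions, and these form a $4$-cycle. Your stated reason for rejecting this route --- that a non-facial $4$-cycle escapes a face-length count --- is precisely the point the counting argument circumvents: one assumes no quadrangular \emph{faces} and then \emph{produces} a (non-facial) $4$-cycle from two adjacent triangles. Instead you appeal to Proposition \ref{prop:1} and Table \ref{t:2} to rule out a $5$-regular planar $\{0,1\}$-Deza graph. Within this paper that is circular: the proof of Proposition \ref{prop:1} in Appendix \ref{app:a} eliminates the $5$-regular $\{0,1\}$ case by invoking exactly the statement you are trying to prove ("by Proposition \ref{prop:30}, every planar, $5$-regular graph contains a $4$-cycle"). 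Your argument is salvageable only by treating Proposition \ref{prop:1} as an external fact from the cited literature rather than as a result of this paper; as a self-contained derivation it does not stand, and the direct count of Proposition \ref{prop:30} is both shorter and logically prior.
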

The three statements in Corollary \ref{cor:1} follow readily from Theorem \ref{thm:35r}, Proposition \ref{prop:0123}, and Proposition \ref{prop:30} (see Section \ref{sec:pf}) respectively.

\paragraph{Plan of the paper.}
The rest of this paper is dedicated to proving the stated results. Section \ref{sec:pf} contains the proof of Theorem \ref{thm:35r} classifying all types of regular polyhedra. Section \ref{sec:4} deals with the proofs of Theorems \ref{thm:4r} and \ref{thm:4max}, and of Proposition \ref{prop:4min}, concerning $4$-regular polyhedral Deza graphs. Section \ref{sec:0123} is dedicated to proving Proposition \ref{prop:0123} on $4$-regular polyhedra of type $\{0,1,2,3\}$.
\\
In Appendix \ref{app:a} we revisit Proposition \ref{prop:1} and give a proof that relies on the fine properties of polyhedral graphs. Certain technical details in the proof of Theorem \ref{thm:35r} to rule out finitely many potentially exceptional cases are relegated to Appendix \ref{app:b}. Lastly, in Appendix \ref{app:c} one may find a self-contained proof of a result that is going to be stated and applied in Section \ref{sec:pf}, and that follows from \cite[Theorem 1.3]{maffucci2025classification}.


\section{The types of regular polyhedra: proof of Theorem \ref{thm:35r}}
\label{sec:pf}
If $G=(V,E)$ is a connected graph, and $S\subseteq V(G)$ is such that deleting the elements of $S$ from $G$ results in a disconnected graph, then $S$ is called a cutset, or $|S|$-cut of $G$. If $S=\{v\}$, then $v$ is a separating vertex. We begin with a basic lemma that will be of substantial use.
\begin{lemma}
	\label{le:sc}
Let $G$ be a polyhedron, $\calc$ a cycle of $G$, and
\[S=S(\calc):=\{v\in\calc : v\text{ has at least one neighbour inside of } \calc\}.\]
Then either $S=\emptyset$ or $|S|\geq 3$.
\end{lemma}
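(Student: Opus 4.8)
The plan is to show that whenever $S\neq\emptyset$ the set $S$ is itself a cutset of $G$, so that $|S|\le 2$ would contradict $3$-connectivity; the inequality $|S|\ge 3$ then follows.

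First I would fix the planar embedding of $G$, which is unique by Whitney's theorem, so that it makes sense to speak of the interior and exterior of $\calc$. Here $\calc$ is a closed (Jordan) curve dividing the plane into a bounded \emph{interior} region and an unbounded \emph{exterior} region, and every vertex of $G$ not on $\calc$ lies in exactly one of them; call these the interior and exterior vertices. The key planarity observation is that \emph{no edge joins an interior vertex to an exterior vertex}: such an edge, drawn without crossings, would have to cross the closed curve $\calc$ at a non-endpoint, producing a forbidden edge-crossing. Consequently every edge leaving the interior terminates at a vertex of $\calc$, and by the very definition of $S$ this vertex lies in $S$. Thus the interior is joined to the rest of $G$ only through the vertices of $S$.

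Next I would note that $S\neq\emptyset$ forces the interior to be non-empty, since a vertex of $S$ has, by definition, a neighbour strictly inside $\calc$. I would then delete $S$ from $G$. After deletion, each interior vertex retains edges only to other interior vertices (its former edges to $\calc$ all landed in $S$, and it never had edges to the exterior), so the non-empty set of interior vertices is disconnected from everything else in $G-S$. The complement is also non-empty: $\calc$ is a cycle, hence has at least three vertices, so $\calc\setminus S$ contains at least $3-|S|\ge 1$ vertex as soon as $|S|\le 2$. Therefore $G-S$ is disconnected, i.e.\ $S$ is a cutset.

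Finally, combining these two non-emptiness facts, the assumption $1\le |S|\le 2$ would exhibit a cutset of size at most $2$ in $G$, which has at least four vertices (the cycle together with an interior vertex); this is impossible since $G$ is $3$-connected. Hence either $S=\emptyset$ or $|S|\ge 3$. The only step that needs genuine care is the planarity argument — rigorously ruling out interior–exterior edges and confirming that the interior really splits off once $S$ is removed — which rests on the uniqueness of the embedding and the Jordan curve property of $\calc$; everything else is a direct connectivity count.
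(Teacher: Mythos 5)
Your proof is correct and takes essentially the same route as the paper's own (one-line) argument: if $1\le|S|\le 2$ then $S$ separates the non-empty interior of $\calc$ from the rest of $G$, contradicting $3$-connectivity. You have merely filled in the Jordan-curve and non-emptiness details that the paper leaves implicit.
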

\begin{proof}
By contradiction if $1\leq|S|\leq 2$, then $S$ is a cutset, hence $G$ is not $3$-connected.
\end{proof}

Lemma \ref{le:sc} allows us to start making considerations about $A(G)$ for regular polyhedra $G$.
\begin{lemma}
	\label{le:0123}
If $G$ is an $r$-regular polyhedron other than the octahedron, then $A\subseteq\{0,1,\dots,r-1\}$.
\end{lemma}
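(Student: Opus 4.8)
The plan is to prove the contrapositive-flavoured statement that a pair of distinct vertices can attain $r$ common neighbours only in the octahedron. Since $|N(u,v)|\le|N(u)|=r$ always holds, showing that $|N(u,v)|=r$ is impossible (when $G$ is not the octahedron) yields $A\subseteq\{0,1,\dots,r-1\}$. First I would dispose of adjacent pairs: if $u$ is adjacent to $v$, then no common neighbour equals $u$ or $v$, so $N(u,v)\subseteq N(u)\setminus\{v\}$ has at most $r-1$ elements. Hence the only pairs that could realise $r$ common neighbours are non-adjacent pairs with $N(u)=N(v)=:\{w_1,\dots,w_r\}$; equivalently, $\{u,v\}$ together with $\{w_1,\dots,w_r\}$ span a subgraph isomorphic to $K(2,r)$.

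Next I would exploit the planar embedding of $G$ restricted to this $K(2,r)$. An Euler-characteristic count gives the embedded $K(2,r)$ exactly $r$ faces; as $K(2,r)$ is $2$-connected its faces are bounded by cycles of even length at least $4$, and since the total face length is $2\cdot 2r=4r$ over $r$ faces, every face is a $4$-cycle. These are necessarily the quadrilaterals $F_i=[u,w_i,v,w_{i+1}]$ for a suitable cyclic labelling (indices mod $r$). The heart of the argument is to show each $F_i$ is essentially empty in $G$. Applying Lemma \ref{le:sc} to the cycle $C_i:=u\,w_i\,v\,w_{i+1}$ with $F_i$ as the inside, I note that every neighbour of $u$ and of $v$ is some $w_j$ lying on the subgraph, so neither $u$ nor $v$ has a neighbour strictly inside $F_i$; thus $S(C_i)\subseteq\{w_i,w_{i+1}\}$, forcing $S(C_i)=\emptyset$. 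So $F_i$ contains no interior vertex, and its only possible chord is $w_i w_{i+1}$ (since $uv\notin E(G)$). Consequently each $F_i$ is either a quadrilateral face of $G$ or is split by the edge $w_i w_{i+1}$ into two triangular faces.

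The key consequence is a degree bound: every face incident to $w_i$ lies in $F_{i-1}\cup F_i$, so $N(w_i)\subseteq\{u,v,w_{i-1},w_{i+1}\}$ and $\deg(w_i)\le 4$. By $r$-regularity $r\le 4$, which at once rules out $r=5$. For $r=3$, regularity forces exactly one of the two chords $w_{i-1}w_i,\,w_iw_{i+1}$ at each $w_i$, i.e. a consistent alternating choice of the edges $w_iw_{i+1}$ around the cycle $\mathbb{Z}/3$; this is a parity obstruction on an odd cycle, so no such configuration exists. For $r=4$, regularity forces both chords at every $w_i$, hence all equatorial edges $w_iw_{i+1}$ are present, and the resulting graph is precisely the square bipyramid, namely the octahedron. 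I would then conclude that for any $r$-regular polyhedron other than the octahedron no pair attains $r$ common neighbours, whence $A\subseteq\{0,1,\dots,r-1\}$.

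The main obstacle I anticipate is the rigidity step: justifying cleanly that the subgraph $K(2,r)$ sits in the sphere as exactly the $r$ quadrilateral regions $F_i$, so that all of $G$ outside the two stars is confined to these regions, and then invoking Lemma \ref{le:sc} with the correct choice of inside. Once the emptiness of each $F_i$ is established, the bound $\deg(w_i)\le 4$ and the finite case analysis in $r$ are routine.
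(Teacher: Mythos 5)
Your proof is correct and takes essentially the same route as the paper: both reduce to a non-adjacent pair realising $K(2,r)$, apply Lemma \ref{le:sc} to the quadrilateral cells $u,w_i,v,w_{i+1}$ to show they contain no interior vertices, and finish with a short case check identifying the octahedron as the only survivor. Your version merely spells out the Euler-count for the faces of the embedded $K(2,r)$ and the degree/parity analysis that the paper compresses into ``$|V(G)|=2+r$, so the only possibility is $r=4$''.
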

\begin{proof}
Clearly graphs of maximum degree $r$, including $r$-regular graphs, satisfy $A\subseteq\{0,1,\dots,r\}$. Let $G$ be an $r$-regular polyhedron, $u\neq v\in V(G)$, and \[N(u,v)=\{a_1,a_2,\dots,a_r\},\]
with vertices appearing in this cyclic order around $u$ in the planar immersion of $G$, as in Figure \ref{fig:pf01}.
\begin{figure}[ht]
	\centering
	\includegraphics[width=3.5cm]{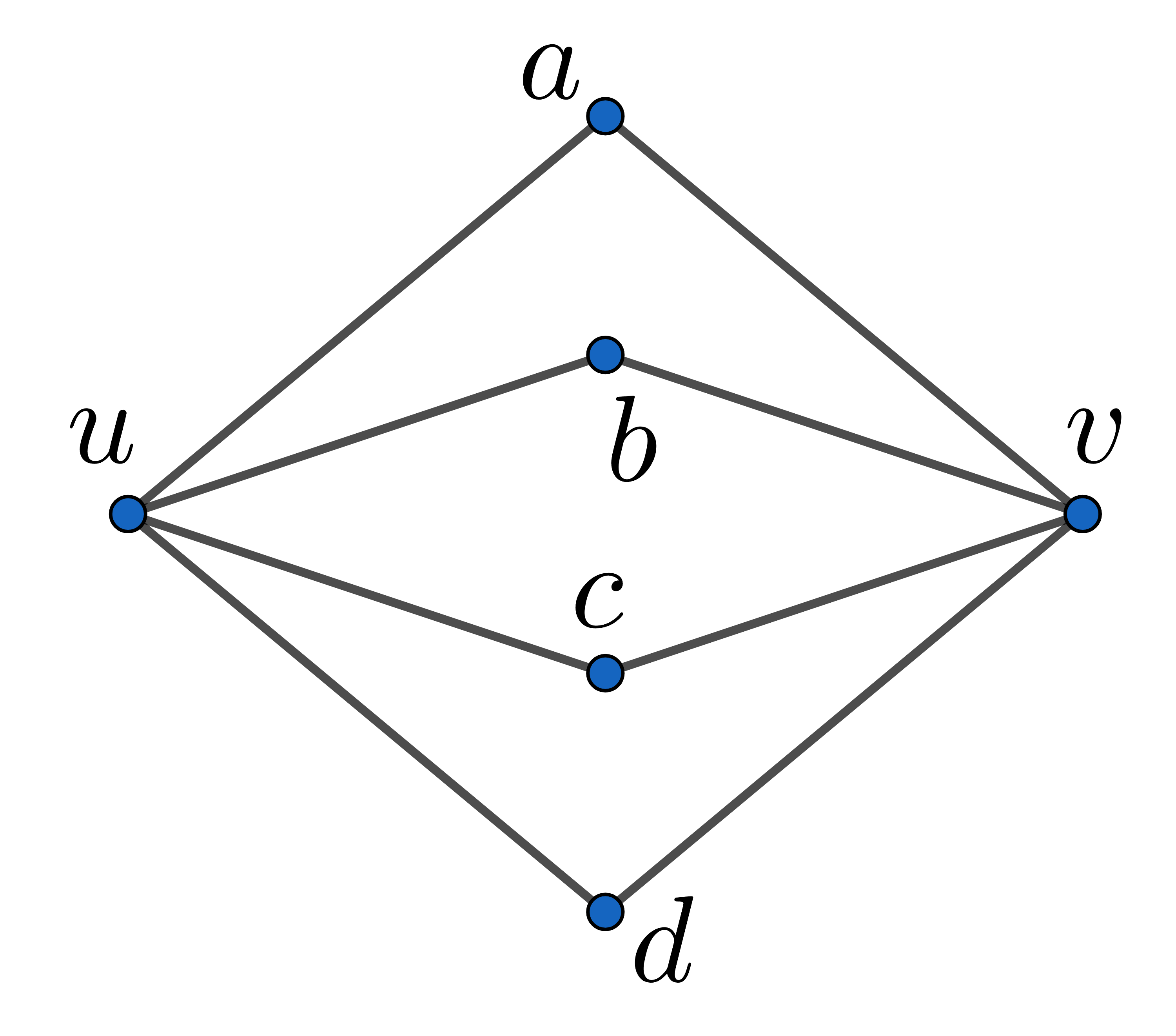}
	\caption{$N(u,v)=\{a,b,c,d\}$, in this cyclic order around $u$.}
	\label{fig:pf01}
\end{figure}

The only neighbours of $u,v$ are $a_1,a_2,\dots,a_r$, hence there cannot be any vertices inside of any of the cycles \begin{equation}
	\label{eq:4cy}
	u,a_i,v,a_{i+1}, \qquad 1\leq i\leq r
\end{equation}
(taking $a_{r+1}=a_1$), otherwise we would have an instance of a cycle $\calc$ satisfying $|S(\calc)|\in\{1,2\}$ in the notation of Lemma \ref{le:sc}. Therefore, $|V(G)|=2+r$ so that the only possibility is $r=4$ and $G$ is the octahedron.
\end{proof}

We continue by stating two basic observations for quick reference.
\begin{lemma}
	\label{le:0A}
Let $G$ be an $r$-regular graph of order $p$. If $p>r^2+1$, then $0\in A$.
\end{lemma}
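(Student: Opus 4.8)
The plan is to prove the contrapositive: assuming $0\notin A$, I will deduce $p\le r^2+1$. Recall that $0\notin A$ means precisely that every pair of distinct vertices of $G$ has at least one common neighbour.

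First I would fix an arbitrary vertex $u$. If $0\notin A$, then every other vertex $v$ shares some neighbour $w$ with $u$; since $w\in N(u)$ and $w\in N(v)$, the vertex $v$ lies at distance at most $2$ from $u$ (along the path $v,w,u$). Hence $V(G)$ is contained in the closed ball of radius $2$ centred at $u$, and it suffices to bound the size of this ball. Note that this step also absorbs the possibility that $G$ is disconnected: a vertex in a different component from $u$ would share no neighbour with $u$, so under $0\notin A$ the whole vertex set must fall inside the $2$-ball.

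The remaining step is the standard Moore-type count of the ball. It consists of $u$ itself, its at most $r$ neighbours, and the vertices at distance exactly $2$. Each neighbour $w\in N(u)$ has $r$ neighbours, one of which is $u$, so it contributes at most $r-1$ vertices at distance $2$; summing over the at most $r$ neighbours gives at most $r(r-1)$ such vertices. Therefore
\[
p \;\le\; 1 + r + r(r-1) \;=\; r^2+1,
\]
which is exactly the contrapositive of the claim.

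The argument is elementary and invokes neither planarity nor connectivity, only $r$-regularity. The only point requiring a little care is to avoid over-counting the distance-$2$ vertices — each neighbour's edge back to $u$ must be discarded, which is what produces the factor $r-1$ rather than $r$. I expect no genuine obstacle here, since this is precisely the radius-$2$ instance of the classical Moore bound.
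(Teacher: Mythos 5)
Your proof is correct and is essentially the paper's own argument: both rest on the radius-$2$ Moore count $1+r+r(r-1)=r^2+1$, you merely phrase it contrapositively while the paper argues directly that some vertex must lie at distance at least $3$. The treatment of disconnected graphs is also equivalent, just folded into your ball argument rather than stated as a separate case.
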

\begin{proof}
If $G$ is disconnected, then clearly $0\in A$. Now let $G$ be connected and fix $v\in V(G)$. There are $r$ vertices at distance $1$ from $v$, and at most $r(r-1)$ at distance $2$ from $v$. Hence as soon as
\[p>1+r+r(r-1)=r^2+1,\]
there exists $u\in V(G)$ at distance $3$ from $v$, so that $N(u,v)=\emptyset$, thus $0\in A$.
\end{proof}

\begin{lemma}[{\cite[Corollary 3.3]{maffucci2025classification}}]
	\label{le:2A}
Let $G$ be a planar graph. Then we have $2\in A$ if and only if $G$ contains a $4$-cycle.
\end{lemma}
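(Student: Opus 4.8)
The plan is to prove the two implications separately, noting that only the reverse direction uses planarity. The forward implication holds for every graph: if some pair $u\neq v$ has exactly two common neighbours $a,b$, then $a\neq b$, and since in a simple graph a vertex is never its own neighbour we have $a,b\notin\{u,v\}$, so $u,a,v,b$ is a genuine $4$-cycle. For the reverse implication, suppose $G$ is planar and contains a $4$-cycle $u,a,v,b$; then $a,b\in N(u,v)$, so the pair of opposite corners $u,v$ satisfies $|N(u,v)|\geq 2$. Assume for contradiction that $2\notin A$, i.e.\ no pair of distinct vertices has exactly two common neighbours; then every pair with at least two common neighbours in fact has at least three.

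To organize a descent argument, fix the planar embedding of $G$. For any pair $x\neq y$ with $k:=|N(x,y)|\geq 3$, the induced subgraph $K(2,k)$ on $\{x,y\}$ and its common neighbours is embedded with the common neighbours appearing in a common cyclic order $a_1,\dots,a_k$ around both $x$ and $y$, so that the faces of this subembedding are the quadrilaterals $[x,a_i,y,a_{i+1}]$ (indices mod $k$). I would call the closed disk bounded by such a quadrilateral a \emph{cell}; by construction the interior of a cell contains no common neighbour of $x,y$. Since a $4$-cycle exists and no pair has exactly two common neighbours, at least one such pair (hence at least one cell) exists. Over all pairs $x,y$ and all cells, I would choose a cell $D=[x,a,y,b]$ minimizing the number of vertices of $G$ lying strictly inside it.

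The descent then runs as follows. The two corners $a,b$ share $x,y$ as common neighbours, so $|N(a,b)|\geq 2$, hence $\geq 3$; pick a third common neighbour $w\notin\{x,y\}$. Because $D$ is the unique face of the $K(2,k)$-subembedding incident to both $a$ and $b$ (here $k=|N(x,y)|$), any edge drawn from such a $w$ to both $a$ and $b$ forces $w$ to lie strictly inside $D$. Consequently the common neighbours of $a,b$ lying inside $D$, together with the boundary vertex $x$, produce a cell $D'$ of the pair $(a,b)$ with $\operatorname{int}(D')\subsetneq\operatorname{int}(D)$ and $w\notin\operatorname{int}(D')$; thus $D'$ has strictly fewer interior vertices than $D$, contradicting minimality. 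This contradiction shows some pair must have exactly two common neighbours, giving $2\in A$.

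The main obstacle I anticipate is the planar bookkeeping in the descent step: rigorously identifying $D$ as the only subembedding face incident to both $a$ and $b$, confirming that $w$ is genuinely interior (and the strict decrease of the interior-vertex count), and disposing of a few degenerate dense configurations where the third common neighbour of $a,b$ could coincide with another common neighbour of $x,y$. These exceptional configurations only occur for small $k$ (essentially the triangular-bipyramid pattern, where $x,y,a,b$ together with a third mutual neighbour are all pairwise close), and in each of them a direct inspection already exhibits a pair with exactly two common neighbours, so they do not escape the conclusion; isolating and checking this finite list is the delicate part of a fully self-contained argument.
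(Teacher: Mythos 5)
The paper does not actually prove this lemma: it is imported verbatim from Corollary~3.3 of the reference cited in the statement, so there is no internal argument to compare yours with; your proposal has to stand on its own. Your forward implication is correct (and, as you note, needs no planarity). In the reverse implication, the generic step of your descent is sound: if the minimal cell is $D=[x,a,y,b]$ and the pair $(a,b)$ has a common neighbour $w\notin\{x,y\}\cup N(x,y)$, then $w$ is indeed forced into the interior of $D$, and the cell of $(a,b)$ of the form $[a,x,b,w']$ (with $w'$ the first such neighbour after $x$) is contained in $D$ and misses $w'$, so it has strictly fewer interior vertices. The problem is the case you set aside.

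That ``degenerate'' case is a genuine gap, and your claim that it reduces to inspecting a finite list is not correct. The obstruction occurs precisely when $|N(x,y)|=3$, say $N(x,y)=\{a,b,c\}$, and the third common neighbour of $(a,b)$ is this same $c$: then $c$ is adjacent to all of $x,y,a,b$, it lies \emph{outside} $D$ (in one of the other two cells' closures), and one can have $N(a,b)=\{x,y,c\}$ with no common neighbour of $(a,b)$ interior to $D$ at all. In that situation the interior of $D$ is itself a cell of the pair $(a,b)$, and your descent produces nothing smaller. This is not a finite list of graphs but a local pattern (essentially $K_5$ minus the matching $\{xy,ab\}$) that can sit inside an arbitrarily large planar graph; the pairs that have exactly two common neighbours \emph{within the pattern} -- for instance $(a,c)$, whose common neighbours inside the configuration are just $x,y$ -- can acquire a third common neighbour elsewhere in $G$, which is exactly what the hypothesis $2\notin A$ forces. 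So ``direct inspection'' does not exhibit a pair with exactly two common neighbours; to close the argument you would have to continue the descent through $(a,c)$, $(b,c)$, $(x,c)$, $(y,c)$, whose additional common neighbours live in the other two cells $[x,c,y,a]$ and $[x,b,y,c]$, and the minimality of $D$ gives no control over the vertex counts there (a cell of $(a,c)$ smaller than $[x,c,y,a]$ need not be smaller than $D$). Until this configuration is genuinely eliminated -- for example by a more carefully chosen extremal object than ``cell with fewest interior vertices'' -- the reverse implication remains unproved.
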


The following result is a consequence of \cite[Theorem 1.3]{maffucci2025classification}. An alternative self-contained proof may be found in Appendix \ref{app:c}.
\begin{prop}
	\label{prop:1A}
Let $G$ be an $r$-regular polyhedron other than the tetrahedron, cube, octahedron, and icosahedron. Then $1\in A$.
\end{prop}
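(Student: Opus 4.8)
The plan is to argue by contradiction: suppose $G$ is an $r$-regular polyhedron with $1\notin A$, and deduce that $G$ must be one of the four excluded solids. Throughout I use that $r\in\{3,4,5\}$, and, since $G$ is not the octahedron, that Lemma~\ref{le:0123} gives $A\subseteq\{0,1,\dots,r-1\}$. The hypothesis $1\notin A$ reformulates as: every pair of vertices having a common neighbour has at least two of them. Equivalently, for every two edges $wa,wb$ sharing a vertex $w$, the endpoints $a,b$ admit a further common neighbour $w'\neq w$, producing a $4$-cycle $awbw'$.

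First I would dispose of the high-girth and triangle-free situations, which are easy. If $G$ has girth at least $5$, it contains no $3$-cycle and no $4$-cycle, so any two vertices have at most one common neighbour (two would close up a $4$-cycle); since some pair certainly has a common neighbour — a vertex $w$ together with two of its neighbours $a,b$ gives $w\in N(a,b)$ — that pair has \emph{exactly} one, so $1\in A$, a contradiction. Hence the girth is $3$ or $4$. Next, a short Euler count rules out triangle-free $r$-regular polyhedra for $r\in\{4,5\}$: with $f_3=0$ every face has length $\ge4$, so $2q=\sum_{k\ge4}kf_k\ge4f$; feeding $q=rp/2$ and $f=2-p+q$ into $q\ge2f$ yields $p(4-r)\ge8$, impossible when $r\ge4$. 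Therefore for $r\in\{4,5\}$ the girth is exactly $3$, while for $r=3$ either the girth is $3$ or $G$ is triangle-free of girth $4$.

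The triangle-free case ($r=3$, girth $4$) I would steer towards the cube: around a vertex $v$ with neighbours $a,b,c$, each of the three pairs of neighbours must acquire a second common neighbour, and as $v$ has degree $3$ these quadrangulate the link of $v$; propagating and applying Euler's formula to the resulting quadrangulation ($p=8/(4-r)$) isolates the cube for $r=3$. The genuinely delicate part is the triangular case. For $r=3$ it is clean: a triangular face $uvw$ forces, by the degree-$3$ constraint, that $u,v$ share their two non-mutual neighbours; a brief inspection of the resulting ``diamond'' either collapses to $K_4$ (the tetrahedron) or exhibits a pair of adjacent vertices with a single common neighbour, contradicting $1\notin A$. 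For $r\in\{4,5\}$ the aim is to show that $1\notin A$ forces every face incident to a triangular face to be triangular too, so that triangularity spreads over the whole sphere; Euler's formula for a triangulation then gives $p=12/(6-r)\in\{6,12\}$, i.e.\ the octahedron or the icosahedron, both excluded.

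The main obstacle is precisely this propagation in the triangular case for $r\in\{4,5\}$ (and symmetrically in the quadrangulation step): when a pair on a triangular, respectively quadrilateral, face is forced to have a second common neighbour, that neighbour need not lie on the adjacent face — it could instead complete a \emph{separating} triangle, respectively separating $4$-cycle. Controlling these short separating cycles is the crux. I would handle it with Lemma~\ref{le:sc} and an innermost-cycle argument: choosing a triangle bounding the fewest interior vertices, Lemma~\ref{le:sc} forces all three of its vertices to send edges inward, and the regularity together with the ``at least two common neighbours'' hypothesis should be played against this configuration either to recover a facial triangle or to directly exhibit a pair with exactly one common neighbour. Finitely many residual small configurations would then be verified by hand. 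This recovers Proposition~\ref{prop:1A} independently of \cite[Theorem 1.3]{maffucci2025classification}.
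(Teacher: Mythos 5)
Your reduction is sound as far as it goes: the girth-$\ge 5$ case, the Euler count showing that $4$- and $5$-regular polyhedra must contain triangles, and the $r=3$ analysis (both the girth-$4$ quadrangulation step and the facial-triangle step collapsing to $K_4$) can all be completed along the lines you sketch, and they broadly parallel what the paper does for $r=3$ in Appendix~\ref{app:c}. But for $r\in\{4,5\}$ your proposal stops exactly where the real proof begins. The statement you need --- that $1\notin A$ forces every face adjacent to a triangular face to be triangular, or more generally that the second common neighbour guaranteed by $1\notin A$ can be forced into a facial position --- is precisely the point at issue, and you do not prove it. You correctly observe that this second common neighbour could instead close a separating triangle or separating $4$-cycle, and then write that Lemma~\ref{le:sc}, regularity and the hypothesis ``should be played against this configuration,'' with ``finitely many residual small configurations'' to be checked by hand. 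That is a plan, not an argument: neither the configurations nor the contradictions are exhibited, and no a priori bound is given that would make the residual cases finite or small.

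For comparison, the paper's proof in Appendix~\ref{app:c} carries out this local analysis explicitly, and it constitutes the bulk of the work. For $r=4$ it fixes $v$ with neighbours $v_1,\dots,v_4$ in cyclic order, takes second common neighbours $a$ of the opposite pair $v_1,v_3$ and $b$ of $v_2,v_4$, and uses planarity to force $a=b$, or $a\in\{v_2,v_4\}$, or $b\in\{v_1,v_3\}$; each branch is closed with Lemma~\ref{le:sc} and yields the octahedron. For $r=5$ it proves your target statement in the equivalent form ``$G$ is a triangulation,'' but by a concrete argument: it takes consecutive vertices $u_1,u_2,u_3$ on a putative face of length at least $4$ with $u_1u_3\notin E(G)$, names the five neighbours of $u_2$, and chases the forced second common neighbours through three planarity cases until a vertex of degree $3$ or a violation of Lemma~\ref{le:0123} appears. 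Until you supply an analysis of this kind (or an actual innermost-cycle argument with the cases enumerated and discharged), the $r\in\{4,5\}$ half of your proof is a gap rather than a proof.
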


The final ingredients needed for the proof of Theorem \ref{thm:35r} concern $5$-regular polyhedra.

\begin{prop}
	\label{prop:30}
	Let $G$ be a plane, connected, $5$-regular $(p,q)$-graph with no quadrangular regions. Call $q_2$ and $q_0$ the number of edges of $G$ that lie on $2$ and $0$ triangular regions respectively. Then we have
	\begin{equation}
		\label{eq:21}
		q_2\geq 15+\frac{q}{2}+q_0\geq 30,
	\end{equation}
	and these bounds are sharp.
	\\
	It follows that every planar, $5$-regular graph contains a $4$-cycle.
\end{prop}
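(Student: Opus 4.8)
The plan is to run an Euler's-formula double count, in the spirit of the classical arguments controlling planar graphs of high minimum degree. Write $G$ as a plane connected $5$-regular $(p,q)$-graph, so $5p=2q$, and Euler's formula $p-q+f=2$ (with $f$ the number of regions, outer region included) gives $f=2+3q/5$. Let $f_i$ denote the number of regions of length $i$. Since $G$ is simple there are no regions of length $1$ or $2$, and by hypothesis $f_4=0$, so every non-triangular region has length at least $5$; recording this is exactly where the ``no quadrangular regions'' assumption enters.

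First I would bound the triangular regions from below. Summing region lengths gives $\sum_i i\,f_i=2q$, and splitting off the triangles,
\[2q=3f_3+\sum_{i\ge 5} i\,f_i \ge 3f_3 + 5(f-f_3)=5f-2f_3.\]
Substituting $f=2+3q/5$ and rearranging yields $2f_3\ge 5f-2q=10+q$, i.e.\ $f_3\ge 5+q/2$. Combining this with the trivial $f_3\le f=2+3q/5$ forces $5+q/2\le 2+3q/5$, whence $q\ge 30$; this is what will produce the second inequality in \eqref{eq:21}.

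Next I would relate $q_0,q_2$ and $f_3$ by a second double count. Each edge borders $0$, $1$, or $2$ triangular regions, so writing $q_1$ for the number of edges bordering exactly one triangle we have $q_0+q_1+q_2=q$, while counting edge--triangle incidences (three edges per triangle, one contribution per bordering triangle) gives $3f_3=2q_2+q_1$. Eliminating $q_1$ yields $q_2=3f_3-q+q_0$, and inserting $f_3\ge 5+q/2$ gives
\[q_2\ge 3\Bigl(5+\tfrac{q}{2}\Bigr)-q+q_0=15+\tfrac{q}{2}+q_0,\]
which is the first inequality in \eqref{eq:21}; since $q\ge 30$ we obtain $q_2\ge 15+15+q_0\ge 30$. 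For sharpness I would exhibit the icosahedron: there $p=12$, $q=30$, every region is triangular ($f_3=20=f$), and every edge borders two triangles, so $q_0=0$ and $q_2=30=15+q/2+q_0$, meeting both bounds with equality.

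For the concluding assertion I would show a planar $5$-regular graph $G$ must contain a $4$-cycle, arguing by contradiction after passing to a connected component (still $5$-regular). If $G$ had no $4$-cycle it would in particular have no quadrangular region, so the inequality \eqref{eq:21} just established applies and gives $q_2\ge 30>0$; but an edge $uv$ bordering two distinct triangular regions $[u,a,v]$ and $[u,b,v]$ forces $a,b\in N(u)\cap N(v)$ with $a\ne b$, and then $u,a,v,b$ is a $4$-cycle, a contradiction. I expect the only real obstacle to be bookkeeping rather than conceptual, since $G$ need not be $3$- or even $2$-connected: one must check that a simple graph has all region lengths at least $3$, that bridges never lie on a triangle (so they feed only into $q_0$ and leave the identity $3f_3=2q_2+q_1$ intact), and that $f_4=0$ is precisely what guarantees every non-triangular region has length at least $5$.
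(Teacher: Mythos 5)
Your proof is correct and follows essentially the same route as the paper's: Euler's formula plus the face-length double count to get $f_3\geq 5+q/2$ (the paper's equivalent form is $f_3\geq 5f/6+10/3$), then the edge--triangle incidence count $3f_3=2q_2+q_1$ combined with $q=q_0+q_1+q_2$ to obtain \eqref{eq:21}, the icosahedron for sharpness, and an edge on two triangles yielding a $4$-cycle. The only minor difference is that you derive $q\geq 30$ from $f_3\leq f$ together with your lower bound on $f_3$, whereas the paper simply invokes the standard fact that every $5$-regular planar graph has at least $30$ edges; both are fine.
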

\begin{proof}
	Let $p,q,f$ be the number of vertices, edges, and regions of $G$. For $i\geq 3$, let $f_i$ be the number of regions of $G$ bounded by a circuit of length $i$. Note that each of these circuits has at least $3$ distinct vertices, thus those of length $3$ and $4$ are always cycles. By the handshaking lemma we have $5p=2q$, that combined with Euler's formula yields
	\begin{equation}
		\label{eq:qf}
		20+6q=10f.
	\end{equation}
	By the handshaking lemma applied to the dual of $G$ (which may be a multigraph) we write
	\[20+3\sum_{i\geq 3}if_i=20+6q=10f=10\sum_{i\geq 3}f_i,\]
	whence
	\[f_3=20+\sum_{i\geq 4}(3i-10)f_i.\]
	By assumption, $f_4=0$ thus
	\[f_3=20+\sum_{i\geq 5}(3i-10)f_i\geq 20+5\sum_{i\geq 5}f_i=20+5(f-f_3).\]
	It follows that
	\begin{equation}
		\label{eq:56}
		f_3\geq\frac{5f}{6}+\frac{20}{6}.
	\end{equation}
	Call $q_2,q_1,q_0$ the number of edges of $G$ that lie on $2,1,0$ triangular regions respectively. If we count all edges of all triangular regions in $G$, then on one hand the sum is $3f_3$, and on the other hand the sum is $2q_2+q_1$. We may thus write
	\[\begin{cases}
		3f_3=2q_2+q_1
		\\
		q=q_2+q_1+q_0.
	\end{cases}\]
	Subtracting the second equation from the first and applying \eqref{eq:56} and \eqref{eq:qf},
	\[q_2-q_0=3f_3-q\geq \frac{5f}{2}+10-q=\frac{10+3q}{2}+10-q=15+\frac{q}{2}.\]
	We have obtained the first inequality in \eqref{eq:21}. The second one is derived since $q\geq 30$ holds for any $5$-regular, planar graph. The bounds are sharp since the icosahedron verifies $q=q_2=30$.
	
	To prove the last statement of this proposition, let $G$ be a connected component of a plane, $5$-regular graph, containing no quadrangular regions. As we have seen $G$ contains (at least $30$) edges lying on two triangular regions, thus $G$ has pairs of adjacent triangular regions, hence it contains a $4$-cycle.
\end{proof}

We are now in a position to prove Theorem \ref{thm:35r}.
\begin{proof}[Proof of Theorem \ref{thm:35r}]
Let $G$ be a cubic polyhedron. Combining Lemmas \ref{le:0123}, \ref{le:0A}, and \ref{le:2A}, and Proposition \ref{prop:1A}, and inspecting the few cases with up to $10$ vertices, we conclude that apart from the tetrahedron and the cube, we have either $A=\{0,1,2\}$ or $A=\{0,1\}$, depending on whether $G$ contains a $4$-cycle or not. We note that if a cubic polyhedron contains a $4$-cycle, then all other vertices lie on one side of it, due to Lemma \ref{le:sc}. Moreover, a cubic polyhedron other than the tetrahedron cannot have a pair of adjacent triangular faces, due to $3$-connectivity. Hence in a cubic polyhedron other than the tetrahedron, all $4$-cycles are facial. It follows that if $G$ is a cubic polyhedron other than the tetrahedron and cube, then we have either $A=\{0,1,2\}$ or $A=\{0,1\}$, depending on whether $G$ contains a quadrangular face or not.

Let $G$ be a $4$-regular polyhedron on more than $17$ vertices. We combine Lemmas \ref{le:0123}, \ref{le:0A}, and \ref{le:2A}, and Proposition \ref{prop:1A}, to deduce that $G$ is of type $\{0,1\}$ if and only if it does not contain a $4$-cycle, else it is of type $\{0,1,2\}$ if and only if it does not contain a subgraph isomorphic to $K(2,3)$, else it is of type $\{0,1,2,3\}$. In \cite[Theorem 3]{brin05}, there is a recursive construction of all $3$-connected quadrangulations of the sphere, and hence of their duals, the quartic polyhedra. They are obtained from the pseudo-double-wheels (duals of the antiprisms) by applying two transformations iteratively. We implemented this construction (our code is available on request) to generate all $4$-regular polyhedra of order up to $17$, and check that they also satisfy Theorem \ref{thm:35r}.

Similarly, let $G$ be a $5$-regular polyhedron on more than $26$ vertices. We combine Lemmas \ref{le:0123}, \ref{le:0A}, and \ref{le:2A}, and Propositions \ref{prop:1A} and \ref{prop:30} to prove that $G$ verifies
\[\{0,1,2\}\subseteq A(G)\subseteq\{0,1,2,3,4\}.\]
It follows that
\[A(G)\in\{\{0,1,2\}, \{0,1,2,3\}, \{0,1,2,4\}, \{0,1,2,3,4\}\},\]
as desired. To settle the few remaining cases of $5$-regular polyhedra with up to $26$ vertices, we could proceed as in the case $r=4$, constructing all such polyhedra via the ideas in \cite{hasheminezhad2011recursive}. Here we are going to present an alternative approach, where one proves the following result directly.
\begin{lemma}
	\label{le:tec}
	If $G$ is a $5$-regular polyhedron, then $0\in A$.
\end{lemma}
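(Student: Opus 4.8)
The plan is to argue by contradiction, assuming $0\notin A$, i.e.\ that every pair of distinct vertices of $G$ has at least one common neighbour. The first step is to bound the order $p$ of $G$. Counting, over all vertices $x$, the pairs contained in $N(x)$ gives the identity
\[\sum_{\{u,w\}\subseteq V(G)}|N(u,w)|=\sum_{x\in V(G)}\binom{\deg x}{2}=10p,\]
since $G$ is $5$-regular. If $0\notin A$, then every one of the $\binom{p}{2}$ terms on the left is at least $1$, whence $10p\geq\binom{p}{2}$ and therefore $p\leq 21$; as $5p=2q$ forces $p$ to be even, we get $p\leq 20$. On the other hand $q=5p/2\leq 3p-6$ gives $p\geq 12$ for every $5$-regular polyhedron, and for $p>26$ one does not even need the above, as Lemma \ref{le:0A} already yields $0\in A$. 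Thus it remains to rule out $p\in\{12,14,16,18,20\}$.

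Two further consequences of $0\notin A$ will be used throughout. First, every edge lies in a triangle (else its endpoints would have no common neighbour), so for each vertex $v$ the subgraph induced on $N(v)$ has no isolated vertex; since a simple graph on five vertices with no isolated vertex has at least three edges, and writing $m_v$ for this number of edges (equivalently, the number of triangles through $v$), we obtain $m_v\geq 3$ for every $v$. Second, for a fixed $v$ the five neighbours carry exactly $25-5-2m_v=20-2m_v$ edges leaving $N(v)$; since $0\notin A$ forces each of the $p-6$ vertices at distance $2$ from $v$ to receive such an edge, we get $20-2m_v\geq p-6$, i.e.\ $m_v\leq 13-\tfrac{p}{2}$. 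Finally, Euler's formula for a $5$-regular polyhedron reads $\sum_{i\geq 4}(i-3)f_i=\tfrac{p}{2}-6$ (with $f_i$ the number of $i$-gonal faces), so $G$ is an \emph{almost-triangulation}, with very few non-triangular faces.

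The two extreme values are then quickly disposed of. When $p=12$ the Euler relation forces every face to be triangular, so $G$ is the unique $5$-regular triangulation of the sphere, the icosahedron; but the icosahedron has diameter $3$, contradicting the bound diameter $\leq 2$ implied by $0\notin A$. When $p=20$ the inequalities $3\leq m_v\leq 13-\tfrac{p}{2}=3$ force $m_v=3$ for all $v$, and then the covering count is tight: the $20-2m_v=14=p-6$ edges leaving $N(v)$ reach the $p-6$ distance-$2$ vertices exactly once each, so every non-adjacent pair has \emph{exactly} one common neighbour. By Proposition \ref{prop:30}, $G$ contains a $4$-cycle $w_1w_2w_3w_4$; since $\{w_1,w_3\}$ and $\{w_2,w_4\}$ each have two common neighbours, both pairs must be adjacent, so $\{w_1,w_2,w_3,w_4\}$ spans a $K_4$. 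But then the link of $w_1$ already contains the triangle $w_2w_3w_4$, while its two remaining neighbours must be non-isolated in the link (every edge is in a triangle), forcing $m_{w_1}\geq 4$ and contradicting $m_{w_1}=3$.

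The remaining orders $p\in\{14,16,18\}$ are the crux, and I expect them to be the main obstacle: here $m_v\leq 13-\tfrac{p}{2}$ is no longer tight, so non-adjacent pairs may have two common neighbours and the clean $K_4$ argument above collapses. The plan is to exploit the almost-triangulation structure together with $3$-connectivity: by Lemma \ref{le:sc} every non-facial triangle is separating (all three of its vertices have neighbours strictly inside), so the handful of non-triangular faces and the separating triangles are tightly constrained, and one analyses how the $4$-cycle guaranteed by Proposition \ref{prop:30} is forced to sit relative to them. Combining the counting identity $\sum_{\{u,w\}}|N(u,w)|=10p$ with these local constraints should again produce either a vertex whose link is forced to carry more than $13-\tfrac{p}{2}$ edges, or a non-adjacent pair with no common neighbour, in either case contradicting $0\notin A$ and completing the three intermediate orders, and hence the proof that $0\in A$.
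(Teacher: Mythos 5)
Your reduction to $p\le 20$ via the double count $\sum_{\{u,w\}}|N(u,w)|=\sum_{x}\binom{\deg x}{2}=10p$ is correct and is a genuinely different (and slicker) route than the paper takes: the paper instead handles all $p\ge 20$ by a long planarity-based case analysis on $|N(u,v)|$ for an adjacent pair $u,v$, and dismisses $p<20$ by citing the enumeration of $5$-regular planar graphs on fewer than $20$ vertices. Your treatments of the two extremes are sound: for $p=12$ the graph is forced to be the icosahedron, which has diameter $3$; for $p=20$ the tightness of $3\le m_v\le 13-\tfrac{p}{2}$ forces every non-adjacent pair to have exactly one common neighbour, which the $4$-cycle supplied by Proposition \ref{prop:30} then contradicts via the $K_4$ and link-count argument.

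The gap is exactly where you flag it: the orders $p\in\{14,16,18\}$ are not proved, only planned. ``Should again produce a contradiction'' is not an argument, and nothing in your setup rules these orders out on its own: for $p=14,16,18$ the bound $m_v\le 13-\tfrac{p}{2}$ leaves slack ($m_v\le 6,5,4$ respectively), so non-adjacent pairs may have two or more common neighbours and the $K_4$ mechanism from the $p=20$ case does not launch. To close the gap you must either carry out a genuine structural analysis for these three orders, or do what the paper does and invoke the known enumeration (cited there from the table in the reference on recursive generation of $5$-regular planar graphs) that the only $5$-regular planar graphs on fewer than $20$ vertices have $12$, $16$ or $18$ vertices and are, respectively, the icosahedron and the two graphs of Figure \ref{fig:1618}; then order $14$ is vacuous and the two remaining concrete graphs are checked directly to contain a pair of vertices with no common neighbour. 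As written, the proof is incomplete.
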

The proof of Lemma \ref{le:tec} is rather technical, and we have relegated it to Appendix \ref{app:b}. Assuming it, the proof of Theorem \ref{thm:35r} is complete.
\end{proof}

\section{Inspection of $4$-regular polyhedral Deza graphs}
\label{sec:4}
\subsection{Proof of Theorem \ref{thm:4r}}
\label{sec:4r}
According to \cite[Theorem 3]{brin05}, the class of $3$-connected quadrangulations of the sphere may be constructed starting from the pseudo-double-wheels (duals of the antiprisms), and applying the transformations $\mathcal{A}$ and $\mathcal{B}$ depicted in Figure \ref{fig:pf12}. Further, by \cite[Theorem 4]{brin05}, the subclass of $3$-connected quadrangulations with no separating $4$-cycles may be constructed starting from the pseudo-double-wheels and applying transformation $\mathcal{A}$.
\begin{figure}[ht]
	\begin{subfigure}{0.45\textwidth}
		\centering
		\includegraphics[width=5.cm]{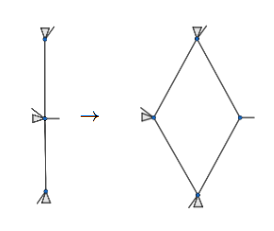}
		\caption{Transformation $\mathcal{A}$.}
		\label{fig:pf12a}
	\end{subfigure}
	\hfill
	\begin{subfigure}{0.45\textwidth}
		\centering
		\includegraphics[width=7.cm]{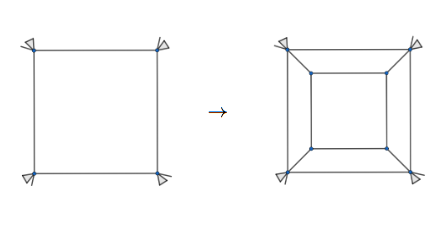}
		\caption{Transformation $\mathcal{B}$.}
		\label{fig:pf12b}
	\end{subfigure}
	\caption{Transformations to construct all $3$-connected quadrangulations. A half-edge indicates an edge that is necessarily there, while a triangle on a vertex indicates one or more edges that might be there.}
	\label{fig:pf12}
\end{figure}

Let $G$ be a $4$-regular polyhedral Deza graph, different from those of Figure \ref{fig:s4}. The dual $G^*$ is a $3$-connected quadrangulation of the sphere. We claim that $G^*$ has no separating $4$-cycles. By contradiction, if $G^*$ has a separating $4$-cycle, then transformation $\mathcal{B}$ of Figure \ref{fig:pf12b} was applied at least once in the construction of $G^*$. After the last application of $\mathcal{B}$, by construction there exists in $G^*$ a face containing only vertices of degree $3$. We note that any subsequent application of $\mathcal{A}$ does not increase the degree of existing vertices. Hence in $G^*$ there exists a face containing only vertices of degree $3$. Therefore, in $G$ there exists a pair of adjacent triangular faces, thus $G$ contains a $4$-cycle, contradicting Proposition \ref{prop:1}.

It follows that $G^*$ is indeed a $3$-connected quadrangulation of the sphere with no separating $4$-cycles. A graph $\Gamma$ is the radial graph of a polyhedron if and only if $\Gamma$ is a $3$-connected quadrangulation of the sphere with no separating $4$-cycles \cite{mafpo3}. Thus $G^*$ is the radial graph of a polyhedron $H$. Equivalently,
\[G=\cm(H).\]

Now by Proposition \ref{prop:1}, $G$ contains no $4$-cycles. Hence $G$ has no quadrangular faces, thus $G^*$ has no vertices of degree $4$, so that by definition of radial graph, $H$ has no vertices of degree $4$ and no quadrangular faces. Moreover, since $G$ has no pairs of adjacent triangular faces, then $G^*$ has no pairs of adjacent vertices of degree $3$, hence in $H$ there is no vertex of degree $3$ lying on a triangular face.

Vice versa, let $H$ be a polyhedron with no vertices of degree $4$, no quadrangular faces, and no vertices of degree $3$ lying on a triangular face. Write $G:=\cm(H)$. Then $G^*=\calr(H)$ is a $3$-connected quadrangulation of the sphere with no separating $4$-cycles, no vertices of degree $4$, and no pair of adjacent vertices of degree $3$. Equivalently, $G$ is a quartic polyhedron with no quadrangular face and no pair of adjacent triangular faces.

Lastly, assume by contradiction that $G$ has a $4$-cycle. Since $G$ is $3$-connected and $4$-regular, by Lemma \ref{le:sc} we have one of the two scenarios in Figure \ref{fig:pf18}. Curved lines represent paths in $G$ (not necessarily edges). The labels are faces of $G$, corresponding to vertices of $G^*$. Lowercase letters correspond to vertices of $H$, and uppercase letters to faces of $H$. In Figure \ref{fig:pf18a}, since $G^*$ is bipartite, its vertices $U,b,Y,c$ are distinct. Then $U,b,Y,c$ is a separating $4$-cycle in $G^*$, contradiction. In Figure \ref{fig:pf18b}, we note that $G^*$ contains odd cycles, hence it is not bipartite, contradiction.

Hence $G$ is a quartic polyhedron with no $4$-cycles, thus it is a Deza graph by Proposition \ref{prop:1}. The proof of Theorem \ref{thm:4r} is complete.
\begin{figure}[ht]
	\begin{subfigure}{0.45\textwidth}
		\centering
		\includegraphics[width=3.cm]{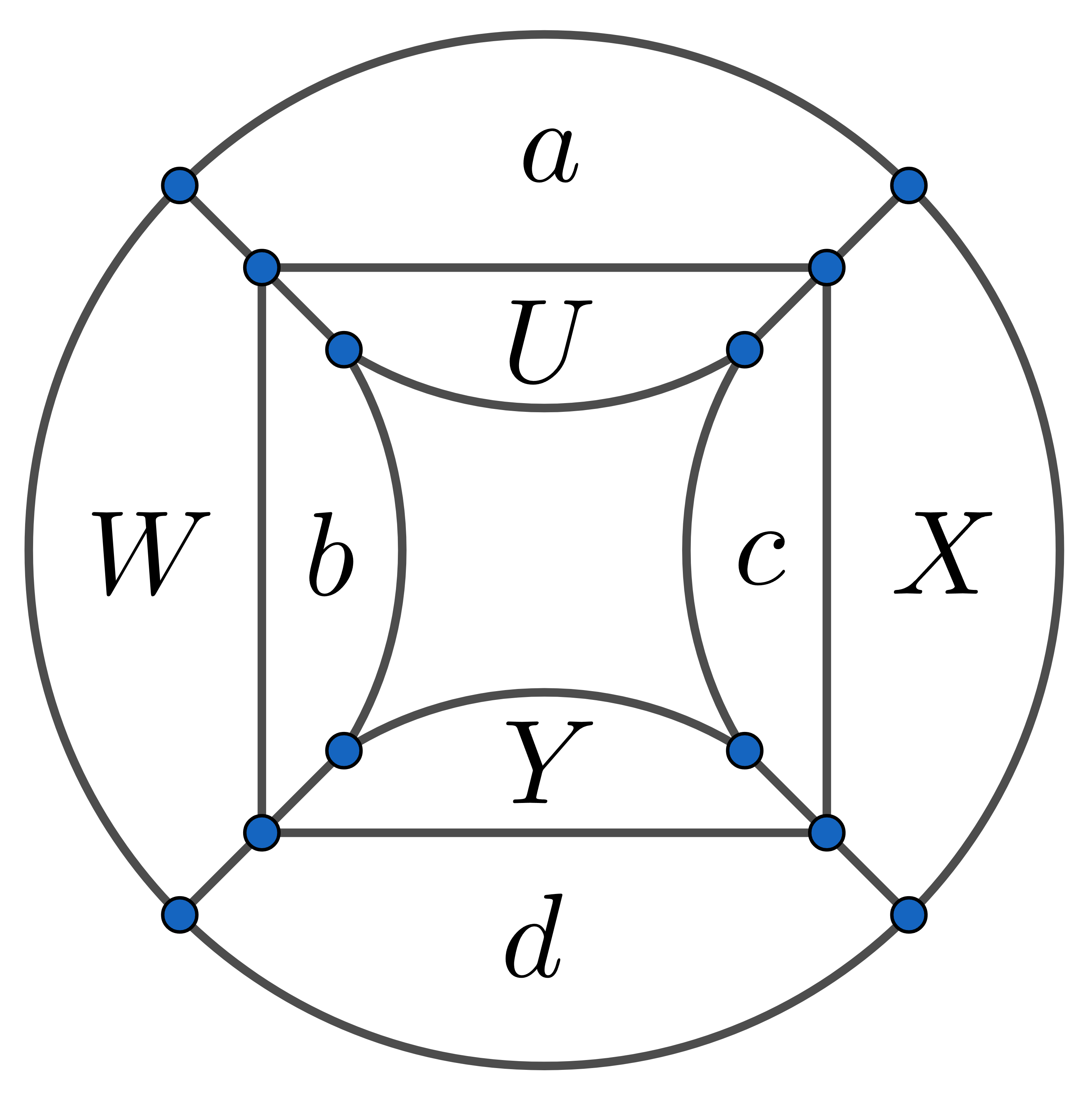}
		\caption{}
		\label{fig:pf18a}
	\end{subfigure}
	\hfill
	\begin{subfigure}{0.45\textwidth}
		\centering
		\includegraphics[width=3.cm]{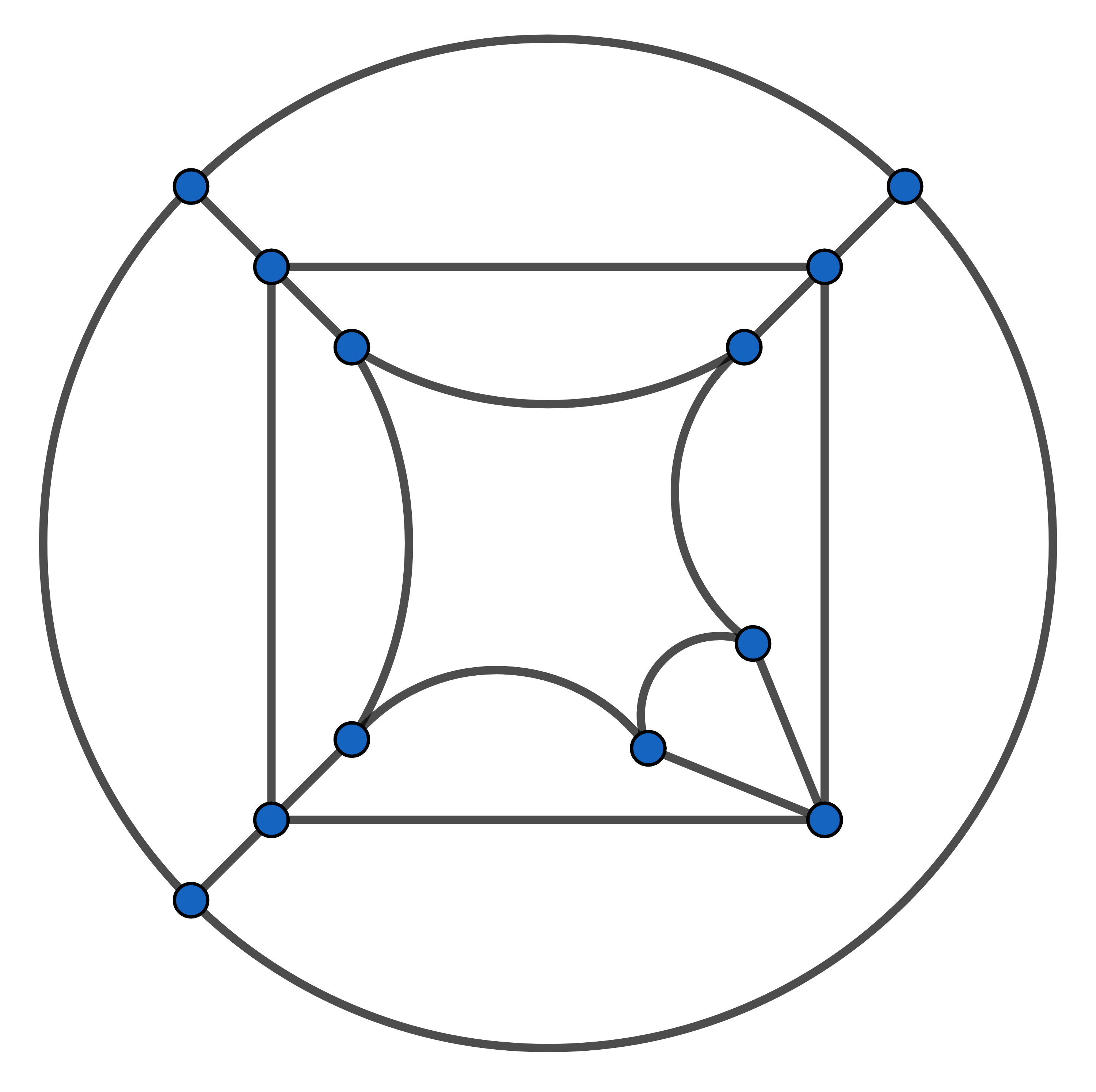}
		\caption{}
		\label{fig:pf18b}
	\end{subfigure}
	\caption{Assuming that $G$ contains a separating $4$-cycle leads to a contradiction.}
	\label{fig:pf18}
\end{figure}

\subsection{Proof of Theorem \ref{thm:4max}}
\label{sec:4max}
We need an auxiliary result, of similar kind to Proposition \ref{prop:30}.
\begin{prop}
	\label{prop:1223}
	Let $G$ be a $4$-regular, planar, Deza $(p,q)$-graph on $f$ faces, different from those of Figure \ref{fig:s4}. Denote by $f_3$ the number of triangular faces in $G$, and by $q_1$ the number of edges lying on a triangular face. Then we have
	\begin{equation}
		\label{eq:f3}
		\frac{f}{2}+4\leq f_3\leq\frac{2f}{3}-\frac{4}{3}
	\end{equation}
	and
	\begin{equation}
		\label{eq:q1}
		q_1\geq\frac{3q}{4}+15,
	\end{equation}
and these bounds are best possible. The lower bound on $f_3$ holds for every $4$-regular polyhedron.
\end{prop}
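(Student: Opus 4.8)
The plan is to extract from the Deza hypothesis the two structural facts that drive everything, and then obtain each bound by a short double count. First I would invoke Proposition \ref{prop:1}: since $G$ is a $4$-regular planar Deza graph other than those of Figure \ref{fig:s4}, it contains no $4$-cycle. Two consequences are immediate and will be used repeatedly: $G$ has no quadrangular face (so $f_4=0$), and no two triangular faces share an edge (two triangles on a common edge would close a $4$-cycle). Combining the handshaking lemma $4p=2q$ with Euler's formula $p-q+f=2$ gives the normalisations
\begin{equation*}
q=2p,\qquad f=p+2,\qquad q=2f-4,
\end{equation*}
which convert freely between counts in $p$, $q$, and $f$.

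For the lower bound on $f_3$ and for \eqref{eq:q1}, I would count edge--face incidences. Writing $f_i$ for the number of faces of length $i$, the incidence count $\sum_{i\ge 3} i f_i = 2q = 4f-8$ together with $\sum_{i\ge 3} f_i=f$ yields $\sum_{i\ge 3}(i-3)f_i=f-8$. Since $f_4=0$, every term with $i\ge 5$ has $i-3\ge 2$, so $f-8=\sum_{i\ge 5}(i-3)f_i\ge 2(f-f_3)$, which rearranges to $f_3\ge \tfrac{f}{2}+4$. The passage to $q_1$ is then purely formal: because no two triangles share an edge, each triangular face contributes three edges counted nowhere else, so $q_1=3f_3$; substituting $q=2f-4$ turns $f_3\ge \tfrac f2+4$ into exactly $q_1\ge \tfrac{3q}{4}+15$. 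I would note here that this half of the argument uses only $f_4=0$, which is why the lower bound persists for every $4$-regular polyhedron with no quadrangular face.

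The upper bound is the crux, and I would obtain it from a local count at vertices rather than at faces. Around any vertex $v$ the four incident faces occur in cyclic order, and consecutive faces share the edge of $G$ joining $v$ to their common neighbour; since no two triangular faces are adjacent, no two consecutive faces at $v$ can both be triangles, so among the four faces at $v$ at most two (a pair of opposite ones) are triangles. Summing over vertices, the number of vertex--triangle incidences satisfies $3f_3=\sum_v \#\{\text{triangles at }v\}\le 2p$, whence $f_3\le \tfrac{2p}{3}=\tfrac{2(f-2)}{3}=\tfrac{2f}{3}-\tfrac43$. Equality forces every vertex to lie on exactly two triangular faces.

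Finally I would check sharpness by exhibiting the equality families. The lower bound is attained precisely when $f_i=0$ for all $i\ge 6$, i.e.\ when every face is a triangle or a pentagon; the medial graph of the dodecahedron (the icosidodecahedron, with $f=32$ and $f_3=20$) realises this. For the upper bound, the extremal condition ``every vertex on exactly two triangles'' is met by the line graph $\cl(H)$ of any cubic polyhedron $H$ of girth $5$: such $\cl(H)$ has one triangle per vertex of $H$ and no triangular face of $H$ to spoil the count, so $f_3(\cl(H))=p(H)$ while $f(\cl(H))=2+\tfrac32 p(H)$, and these satisfy $f_3=\tfrac{2f}{3}-\tfrac43$. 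I expect the only real subtlety to be the local argument for the upper bound---specifically justifying that adjacency of faces around a degree-$4$ vertex is governed by shared edges---together with verifying that the proposed extremal graphs genuinely lie in the class (no $4$-cycles), which follows from Theorems \ref{thm:4r} and \ref{thm:4max}.
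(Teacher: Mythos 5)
Your proof is correct and follows essentially the same route as the paper: Euler's formula plus the edge--face incidence count for the identity behind the lower bound, the fact that no two triangular faces share an edge (from the no-$4$-cycle property of Proposition \ref{prop:1}) to get $q_1=3f_3$ and hence \eqref{eq:q1}, and the icosidodecahedron for sharpness; your upper bound via ``at most two triangles around each vertex'' giving $3f_3\leq 2p$ is numerically identical to the paper's $3f_3=q_1\leq q$ since $q=2p$. One small caution: for sharpness you should not lean on Theorem \ref{thm:4max}, whose proof in the paper relies on this very proposition --- the explicit icosidodecahedron (whose lack of $4$-cycles is checked directly) already attains equality in all three bounds and suffices.
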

\begin{proof}
	Let $G$ be a $4$-regular polyhedron. We have $4p=2q$, so that combining with Euler's formula and the handshaking lemma on the dual of $G$ we obtain
	\[8+\sum_{i\geq 3}if_i=8+2q=4f=4\sum_{i\geq 3}f_i.\]
	Rearranging, $f_3=8+\sum_{i\geq 5}(i-4)f_i$. Hence any $4$-regular polyhedron satisfies
	\begin{equation}
	\label{eq:sum}
	f_3=8+\sum_{i\geq 5}(i-4)f_i\geq 8+\sum_{i\geq 5}f_i=8+(f-f_3),
	\end{equation}
	thus $f_3\geq f/2+4$.
	
	Call $q_2,q_1,q_0$ the number of edges of $G$ that lie on $2,1,0$ triangular faces respectively. Note that $2q_2+q_1=3f_3$. We now impose that $G$ is a Deza graph. By Proposition \ref{prop:1}, we have $q_2=0$, hence $q_1=3f_3$. We deduce that
	\[q_1\geq\frac{3f}{2}+12=\frac{3q}{4}+15.\]
	On the other hand
	\[3f_3=q_1\leq q=2f-4,\]
	so that $f_3\leq 2f/3-4/3$, as claimed.
	
	To see that the bounds of the present proposition are best possible, we note that the line graph of the dodecahedron (i.e., the medial graph of the dodecahedron and icosahedron), sketched in Figure \ref{fig:vficos}, satisfies $f=32$, $f_3=20$, and $q=q_1=60$, hence the inequalities in \eqref{eq:f3} and \eqref{eq:q1} are equalities in this example.
	\begin{figure}[ht]
	\centering
	\includegraphics[width=4.25cm]{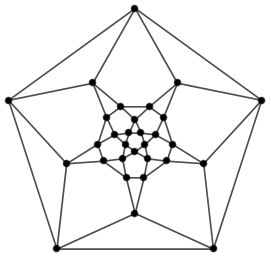}
	\caption{The icosidodecahedron, line graph of the dodecahedron is a $4$-regular, planar, Deza $(30,60)$-graph on $32$ faces, of which $20$ triangular and $12$ pentagonal.}
	\label{fig:vficos}
	\end{figure}
	
\end{proof}

Combining the two inequalities in \eqref{eq:f3}, we obtain $f\geq 32$. Hence a $4$-regular, planar Deza graph other than those in Figure \ref{fig:s4} satisfies $f\geq 32$, thus $q\geq 60$ and $p\geq 30$, as in the penultimate row of Table \ref{t:1}. If these inequalities are equalities, then $q=q_1=60$, and inspecting the proof of Proposition \ref{prop:1223}, we get
\[\sum_{i\geq 5}(i-4)f_i=\sum_{i\geq 5}f_i,\]
i.e.
\[\sum_{i\geq 6}(i-5)f_i=0, \quad\text{ thus }\quad f_i=0 \ \ \forall i\geq 6.\]
Hence in $G$ each face is either triangular or pentagonal, and each triangular face is adjacent only to pentagonal faces, and vice versa. Thus it is easy to see when constructing the graph in Figure \ref{fig:vficos}, that this is in fact the unique smallest quartic planar Deza graph, other than those of Figure \ref{fig:s4}.

We are ready to prove Theorem \ref{thm:4max}.
\begin{proof}[Proof of Theorem \ref{thm:4max}]
Let $G$ be a $4$-regular, planar, Deza $(p(G),q(G))$-graph with $f(G)$ faces (distinct from those in Figure \ref{fig:s4}), with number of triangular faces $f_3(G)$ maximised w.r.t. to $f(G)$. By Proposition \ref{prop:1223}, we have
\[f_3(G)=\frac{2f(G)}{3}-\frac{4}{3} \qquad \text{ and }\qquad q_1(G)=q(G),\]
where $q_1(G)$ counts the edges of $G$ lying on a triangular face. Since $q_1(G)=q(G)$, in $G$ every edge lies on one triangular and one non-triangular face.

By Theorem \ref{thm:4r}, there exists a polyhedron $H$ with no vertices of degree $4$, no quadrangular faces, and no vertex of degree $3$ lying on a triangular face, such that
\[G=\cm(H).\]
The dual graph $G^*$ of $G$ is the radial graph of $H$.

We will denote by $p,q,f$ the number of vertices, edges, and faces of $H$. Similarly, $p_i$ and $f_i$ stand respectively for the number of vertices of degree $i$ and the number of faces of length $i$ in $H$. Since $f_3(G)=2f(G)/3-4/3$, then $p_3(G^*)=2p(G^*)/3-4/3$, so that by definition of radial graph we have
\[p_3+f_3=\frac{2(p+f)}{3}-\frac{4}{3}.\]
Due to Euler's formula, we obtain
\[3p_3+3f_3=2p+2f-4=2q+4-4=2q.\]

For $i=1,2$, let us call $d_i$ the number of edges of $H$ incident to exactly $i$ vertices of degree $3$, $t_i$ the number of edges of $H$ lying exactly on $i$ triangular faces, and $o$ the number of edges of $H$ that are not adjacent to vertices and degree $3$ and do not lie on a triangular face. Since we have shown that in $H$ there is no vertex of degree $3$ lying on a triangular face, we have $q=d_2+d_1+t_2+t_1+o$.

If we count all incident edges to a vertex of degree $3$ in $H$, we will have counted twice each edge connecting two vertices of degree $3$, and once each edge incident to exactly one vertex of degree $3$, thus $3p_3=2d_2+d_1$. Similarly, $3f_3=2t_2+t_1$. Collecting the information we have on $H$,
\[\begin{cases}
3p_3+3f_3=2q\\
q=d_2+d_1+t_2+t_1+o\\
3p_3=2d_2+d_1\\
3f_3=2t_2+t_1.
\end{cases}\]
Combining these equations yields
\[(2d_2+d_1)+(2t_2+t_1)=2(d_2+d_1+t_2+t_1+o),\]
thus
\[d_1+t_1+2o=0,\]
so that $d_1=t_1=o=0$, hence
\[q=d_2+t_2.\]
Recalling that in $H$ there is no vertex of degree $3$ lying on a triangular face, we deduce that there are two cases. Either $q=d_2$, so that $H$ is a cubic polyhedron, or $q=t_2$, so that $H$ is a triangulation of the sphere. Recalling that $G=\cm(H)$ if and only if $G=\cm(H^*)$, and cubic polyhedra are the duals of the triangulations, we conclude that $G$ is the medial graph of a cubic polyhedron $H$. Since in $H$ no vertex of degree $3$ lies on a triangular face, then $H$ in fact has no triangular faces. We have already shown that $H$ has no quadrangular faces either, thus $H$ is of girth $5$. Since $H$ is a cubic polyhedron, we write
\[G=\cm(H)=\cl(H),\]
as claimed.

Vice versa, let $H$ be a cubic polyhedron of girth $5$, and $G:=\cl(H)=\cm(H)$. Hence $G^*=\calr(H)$ is a $3$-connected quadrangulation with no separating $4$-cycles, no vertices of degree $4$, and such that each edge is incident to one vertex of degree $3$ and one vertex of degree $\geq 5$. Equivalently, $G$ is a quartic polyhedron with no quadrangular faces, and such that each edge lies on one triangular face and one face of length $\geq 5$. It follows that $G$ contains no quadrangular faces and no pairs of adjacent triangular faces. Further, as seen in the proof of Theorem \ref{thm:4r}, $G$ contains no separating $4$-cycles. Therefore, $G$ contains no $4$-cycles at all. By Proposition \ref{prop:1}, $G$ is a Deza graph. According to Proposition \ref{prop:1223}, since $q_1(G)=q(G)$, we have $f_3(G)=2f(G)/3-4/3$, as claimed.
\end{proof}

\subsection{Proof of Proposition \ref{prop:4min}}
\label{sec:4min}
\begin{proof}[Proof of Proposition \ref{prop:4min}]
Let $G$ be a $4$-regular polyhedral Deza $(p(G),q(G))$-graph with $f(G)$ faces (distinct from those in Figure \ref{fig:s4}), with number of triangular faces $f_3(G)$ minimised w.r.t. to $f(G)$. By Proposition \ref{prop:1223}, we have
\[f_3(G)=\frac{f(G)}{2}+4 \qquad \text{ and }\qquad q_0(G)=\frac{q(G)}{4}-15,\]
where for $i=0,1$, $q_i(G)$ is the number of edges in $G$ lying on $i$ triangular faces. Inspecting the proof of Proposition \ref{prop:1223}, we deduce that equality holds in \eqref{eq:sum}, thus
\[\sum_{i\geq 5}(i-4)f_i(G)=\sum_{i\geq 5}f_i(G),\]
i.e.
\[\sum_{i\geq 6}(i-5)f_i(G)=0, \quad\text{ thus }\quad f_i(G)=0 \ \ \forall i\geq 6.\]
We have obtained
\[f(G)=f_3(G)+f_5(G), \qquad f_3(G)=\frac{f(G)}{2}+4, \qquad f_5(G)=\frac{f(G)}{2}-4.\]

By Theorem \ref{thm:4r}, $G=\cm(H)$, where $H$ is a $(p,q)$-polyhedral graph on $f$ faces, with no vertices of degree $4$, no quadrangular faces, and no vertices of degree $3$ lying on a triangular face. Let $p_i,f_i$ be the number of vertices of degree $i$ and faces of length $i$ in $H$ respectively. Since in $G$ every face is either triangular of pentagonal, then in $G^*=\calr(H)$, every vertex has degree either $3$ or $5$, hence in $H$ we have $p=p_3+p_5$ and $f=f_3+f_5$. In particular, in $H$ every vertex of degree $3$ lies only on pentagonal faces.

Furthermore,
\[p_3+f_3=p_3(G^*)=f_3(G)=\frac{f(G)}{2}+4=\frac{p(G^*)}{2}+4=\frac{p+f}{2}+4=\frac{q}{2}+5,\]
where we have used Euler's formula for $H$ in the last equality.

Vice versa, if $H$ is a polyhedron satisfying such properties, then by the equalities above and by Theorem \ref{thm:4r}, $G:=\cm(H)$ is a $4$-regular planar Deza graph verifying $f_3(G)=f(G)/2+4$.
\end{proof}

Note that the graph in Figure \ref{fig:vficos} satisfies both the lower and upper bound for $f_3$ with respect to $f$, hence it is also an illustration of $G$ for Proposition \ref{prop:4min}. In this case $H$ is either the dodecahedron or its dual the icosahedron.

Another example for Proposition \ref{prop:4min} is given in Figure \ref{fig:min}. We have $G=\cm(H)$, and $H$ is a self-dual polyhedral $(26,50)$-graph on $26$ faces. Note that in $H$ we have $p_3=f_3=15$, and accordingly $q/2+5=30$. In $H$ each vertex of degree $3$ lies on three pentagons.
\begin{figure}[ht]
	\begin{subfigure}{0.45\textwidth}
		\centering
		\includegraphics[width=4.5cm]{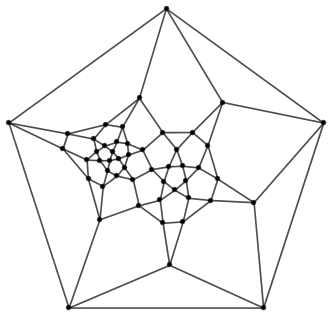}
		\caption{$G$.}
		\label{fig:minG}
	\end{subfigure}
	\hfill
	\begin{subfigure}{0.45\textwidth}
		\centering
		\includegraphics[width=4.5cm]{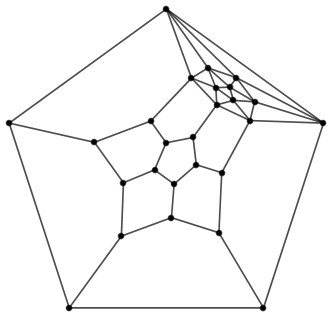}
		\caption{$H$.}
		\label{fig:minH}
	\end{subfigure}
	\caption{An illustration of Proposition \ref{prop:4min}.}
	\label{fig:min}
\end{figure}

\section{$4$-regular polyhedra of type $\{0,1,2,3\}$: proof of Proposition \ref{prop:0123}}
\label{sec:0123}
By assumption $G$ is a $4$-regular polyhedron satisfying $A(G)=\{0,1,2,3\}$. Hence we may find $a,b,c,d,e\in V(G)$ such that
\[N(a,b)=\{c,d,e\}.\]
We consider the $4$-cycles
\[a,c,b,d,\qquad a,d,b,e,\qquad a,e,b,c.\]
At most one of these may be facial, else $G$ would contain a vertex of degree $2$ (this is similar to Lemma \ref{le:0123}).

Let $\calc$ be a non-facial cycle in $G$, and $H$ the subgraph of $G$ generated by all the vertices inside of $\calc$. The vertices of $H$ not adjacent to any vertex on $\calc$ have degree $4$ in $H$, thus in particular the total degree of these vertices is an even number. By the handshaking lemma on $H$, there is hence an even number of edges connecting vertices on $\calc$ with vertices of $H$. By the $3$-connectivity of $G$, this even number is at least $4$.

Suppose for the moment that two of $c,d,e$ are adjacent, w.l.o.g.\ $cd\in E(G)$. At least one of the cycles $a,d,b,e$ and $a,e,b,c$ is not facial, and moreover the vertices $a$, $b$, $c$, and $d$ are each adjacent to one more vertex, and $e$ to two more. By the above considerations and by Lemma \ref{le:sc}, $e$ is adjacent to one of $c,d$, say $d$, so that ultimately the subgraph of $G$ generated by $a,b,c,d,e$ is a square pyramid with apex $d$. In $G$, each of $a$, $b$, $c$, and $e$ has a neighbour inside of the cycle $a,e,b,c$. The reader may refer to Figure \ref{fig:0123pa}.
\begin{figure}[ht]
	\begin{subfigure}{0.45\textwidth}
		\centering
		\includegraphics[width=4.cm]{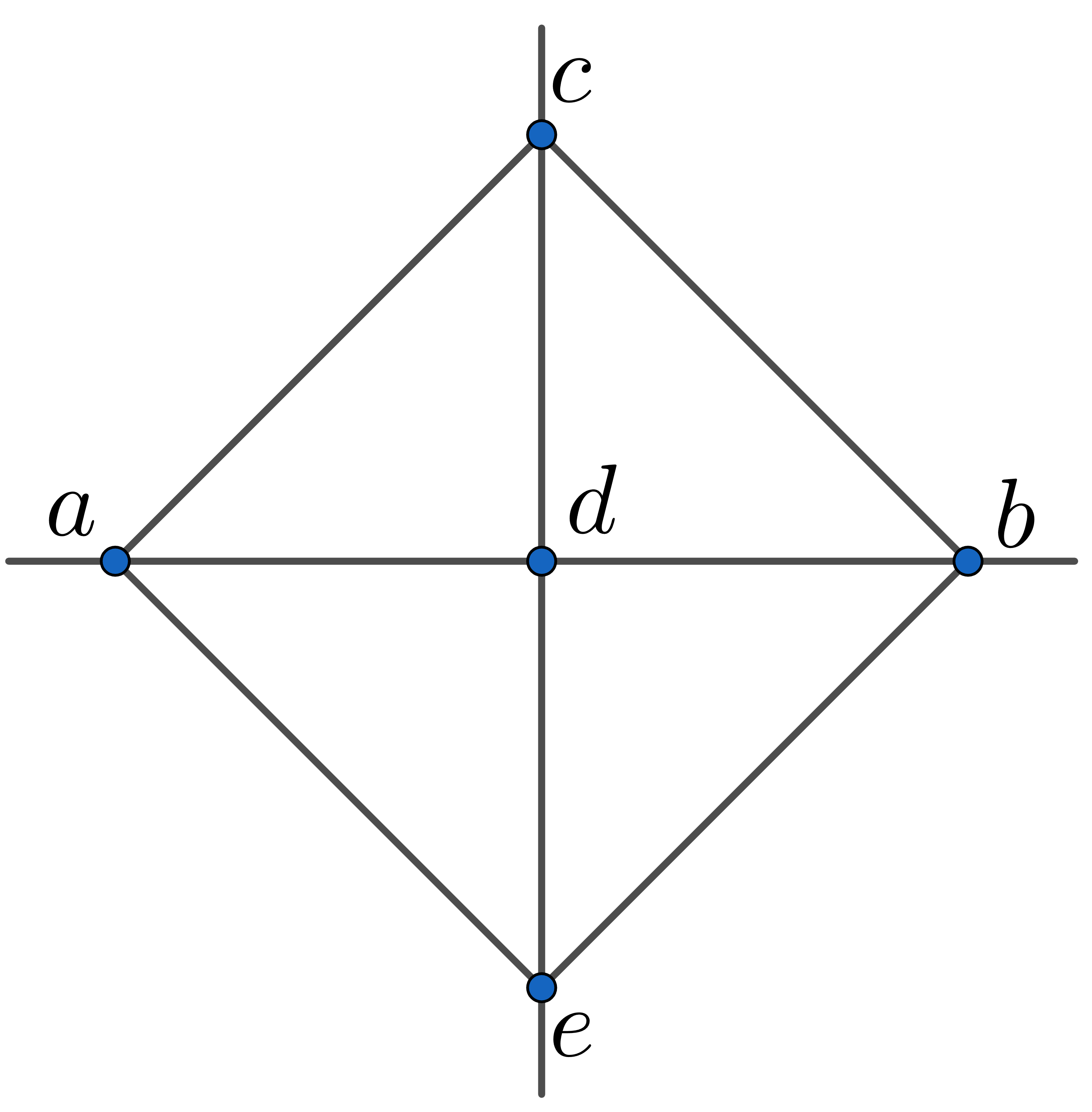}
		\caption{$cd\in E(G)$.}
		\label{fig:0123pa}
	\end{subfigure}
	\hfill
	\begin{subfigure}{0.45\textwidth}
		\centering
		\includegraphics[width=4.cm]{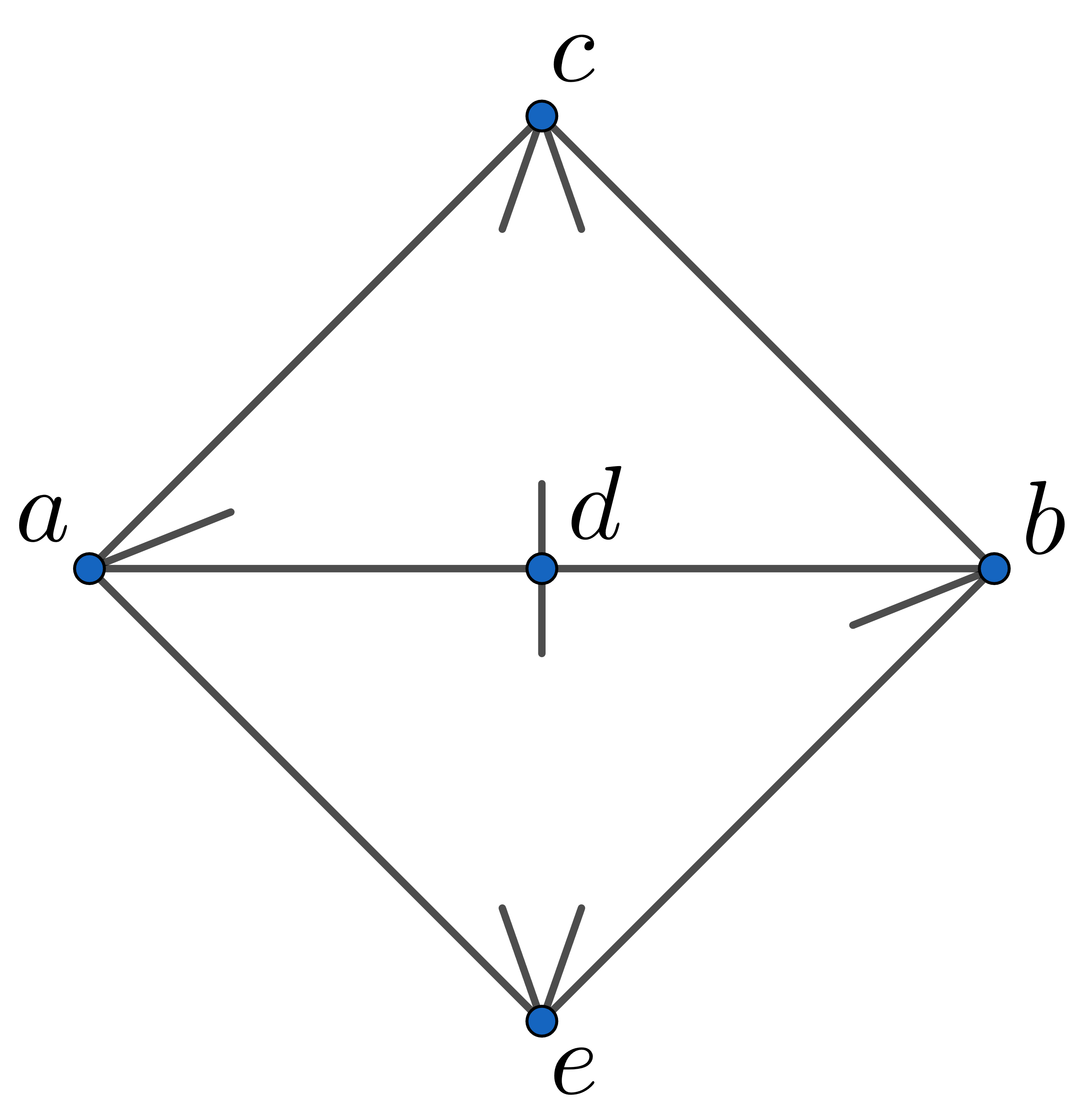}
		\caption{$c,d,e$ are pairwise non-adjacent.}
		\label{fig:0123pb}
	\end{subfigure}
	\caption{Proof of Proposition \ref{prop:0123}.}
	\label{fig:0123}
\end{figure}

The only other possibility is that $c,d,e$ are pairwise non-adjacent. Then by the above considerations and by Lemma \ref{le:sc}, w.l.o.g.\ $[a,e,b,c]$ is a face of $G$, $a,d$ each have one neighbour and $c$ has two neighbours inside of the cycle $a,c,b,d$, and $b,d$ each have one neighbour and $e$ has two neighbours inside of the cycle $a,d,b,e$. The reader may refer to Figure \ref{fig:0123pb}.

Let
\[G_1':=G-e\qquad\text{and}\qquad G_2':=G-c.\]
We have recovered the construction of Definition \ref{def:1} with $c=u(G_1)$, $e=u(G_2)$, $a=x(G_1)=z(G_2)$ $d=y(G_1)=y(G_2)$, $b=z(G_1)=x(G_2)$, $v(G_1)$ the unique neighbour of $a$ inside $a,c,b,d$, $w(G_1)$ the unique neighbour of $d$ inside $a,c,b,d$, $v(G_2)$ the unique neighbour of $b$ inside $a,d,b,e$, and $w(G_2)$ the unique neighbour of $d$ inside $a,d,b,e$. We now obtain $G_1,G_2$ from $G_1',G_2'$ by applying the inverse transformation of \eqref{eq:prime}. Since $G$ is a $4$-regular polyhedron, so are $G_1,G_2$.

It remains to show that $G$ is not a medial graph. By definition of medial graph, if $G=\cm(\Gamma)$, we may recover $\Gamma$ in the following way. We define a vertex for each face of $G$, and edges between vertices corresponding to faces of $G$ that share a vertex but not an edge in $G$. This construction yields the disjoint union of the dual pair of graphs $\Gamma,\Gamma^*$.

Let $G$ contain a square pyramid of apex $d$ and base $a,c,b,e$. Call $F_{bc}$ the face of $G$ that has the common edge $bc$ with $[b,c,d]$, and $F_{ae}$ the face that has the common edge $ae$ with $[a,e,d]$. Since $G$ is not the octahedron, then $\Gamma$ is not the tetrahedron, hence removing from $\Gamma$ the vertices corresponding to $F_{bc},F_{ae}$ yields at least two connected components (one of them containing only the vertices corresponding to the faces $[a,c,d]$ and $[b,e,d]$). Thus $\Gamma$ is not $3$-connected, hence $G$ is not a medial graph of a polyhedron, as claimed.

In the other case above, where $c,d,e$ are pairwise non-adjacent, call $a'$ the neighbour of $a$ distinct from $c,d,e$, $b'$ the neighbour of $b$ distinct from $c,d,e$, $F_{a}$ the face of $G$ containing $a',a,d$, $F_{b}$ the face of $G$ containing $b',b,d$, and $F=[a,e,b,c]$. Removing from $\Gamma$ the vertices corresponding to $F,F_a$ (or $F,F_b$) yields a disconnected graph, thus $G$ is not a medial graph of a polyhedron in this case either. The proof of Proposition \ref{prop:0123} is complete.

\appendix
\section{Revisiting planar Deza graphs: a proof of Proposition \ref{prop:1}}
\label{app:a}
For graphs of connectivity at least $3$, the result follows from Theorem \ref{thm:35r}. Let $G$ be an $r$-regular graph of order $p$, with connectivity at most $2$. The cases $r\leq 2$ are trivial, so henceforth let $r\geq 3$.

We will begin by showing that $0\in A$. If $G$ is disconnected, this is clearly true. Let $G$ be of connectivity $1$, and $v$ be a separating vertex. Since $G$ is $r$-regular for $r\geq 3$ and it has a separating vertex, then there exists $w\in V(G)$ at distance $2$ from $v$. Let $x$ lie in a different connected component to $w$ in $G-v$. Then $N(w,x)=\emptyset$, hence $0\in A$.

Now let $G$ be of connectivity $2$, and $\{u,v\}$ be a cutset. As soon as $p>2+2r$, there exists a vertex $w$ at distance at least $2$ from both $u$ and $v$. Taking $x$ in a different connected component to $w$ in $G-u-v$, we have $N(w,x)=\emptyset$, hence $0\in A$. We inspect by hand the few cases where $p\leq 2+2r$ for $r=3,4,5$.

In any case, $0\in A$, so that we are looking for $\{0,\mu\}$-Deza graphs. We may assume that $G$ is connected by working on a connected component of $G$. Let $G$ be of connectivity $1$, $v$ be a separating vertex, and $w,x$ be neighbours of $v$ lying in distinct connected components of $G-v$. Then $N(w,x)=\{v\}$, so that if $G$ is a Deza graph, then $\mu=1$.

Now let $G$ be of connectivity $2$, $\{u,v\}$ be a cutset, and $w,x$ be neighbours of $u$ lying in distinct connected components of $G-u-v$. Then $\{u\}\subseteq N(w,x)\subseteq\{u,v\}$, so that if $G$ is a Deza graph, then $1\leq\mu\leq 2$.

A connected, $r$-regular, $\{0,2\}$-Deza graph is of connectivity $r$ \cite{brouwer1997vertex}. Hence in a planar, $r$-regular, $\{0,2\}$-Deza graph with $r\geq 3$, every component is a polyhedron. By Theorem \ref{thm:35r}, the only possibilities are a disjoint union of tetrahedra and/or cubes ($r=3$) and a disjoint union of icosahedra ($r=5$).

By Lemma \ref{le:2A}, if $G$ is a planar, $r$-regular graph with $r\geq 3$, then $G$ is a $\{0,1\}$-Deza graph if and only if it does not contain a $4$-cycle (cf.\ \cite[Remark 2]{kabanov2020deza}). On the other hand by Proposition \ref{prop:30}, every planar, $5$-regular graph contains a $4$-cycle. The proof of Proposition \ref{prop:1} is complete.

\section{Proof of Lemma \ref{le:tec}}
\label{app:b}
Let $G$ be a $5$-regular polyhedron of order $p$. There are only three $5$-regular planar graphs with less than $20$ vertices \cite[Table 1]{hasheminezhad2011recursive}, namely the icosahedron and the two polyhedra in Figure \ref{fig:1618}, all of which verify $0\in A$. Thus we will henceforth assume that $p\geq 20$. Let $u,v$ be adjacent vertices of $G$.
\begin{figure}[ht]
	\begin{subfigure}{0.49\textwidth}
		\centering
		\includegraphics[width=3.25cm]{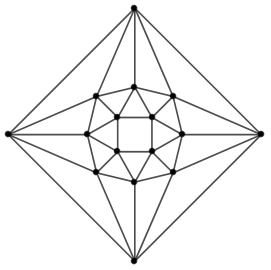}
		\caption{The only $5$-regular planar graph on $16$ vertices.}
		\label{fig:16}
	\end{subfigure}
	\hfill
	\begin{subfigure}{0.49\textwidth}
		\centering
		\includegraphics[width=3.25cm]{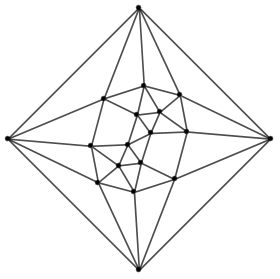}
		\caption{The only $5$-regular planar graph on $18$ vertices.}
		\label{fig:18}
	\end{subfigure}
	\caption{The second and third smallest $5$-regular planar graphs.}
	\label{fig:1618}
\end{figure}
	
We start by considering the case $|N(u,v)|=1$. We write
\[N(u)=\{v,w,u_1,u_2,u_3\} \quad\text{ and }\quad N(v)=\{w,u,v_1,v_2,v_3\}\]
in cyclic order around $u,v$ respectively in the planar immersion of $G$, with
$u_1,u_2,u_3,v_1,v_2,v_3$ distinct.
	
We claim that $u_1u_3,v_1v_3\not\in E(G)$. By contradiction, take $u_1u_3\in E(G)$, as in Figure \ref{fig:pf14a}. Since $0\not\in A$, the vertex $u_2$ has a common neighbour with each of $v_1,v_2,v_3$. Hence by planarity, each of $v_1,v_2,v_3$ is adjacent to one of $u_1,u_3$. We cannot have $u_2$ and all of $v_1,v_2,v_3$ adjacent to $u_1$, else $\deg(u_1)\geq 6$. Therefore, $u_2$ is adjacent to both of $u_1,u_3$, two of $v_1,v_2,v_3$ are adjacent to $u_1$ say, and the third is adjacent to $u_3$. Since $u,u_1$ already have five neighbours each, then by Lemma \ref{le:sc} there are no vertices inside of the cycles $u,u_1,u_2$, $u,u_2,u_3$, and $u_1,u_2,u_3$, thus $\deg(u_2)=3$, contradiction. Thereby, $u_1u_3,v_1v_3\not\in E(G)$ as claimed.
\begin{figure}[ht]
	\begin{subfigure}{0.45\textwidth}
		\centering
		\includegraphics[width=5.75cm]{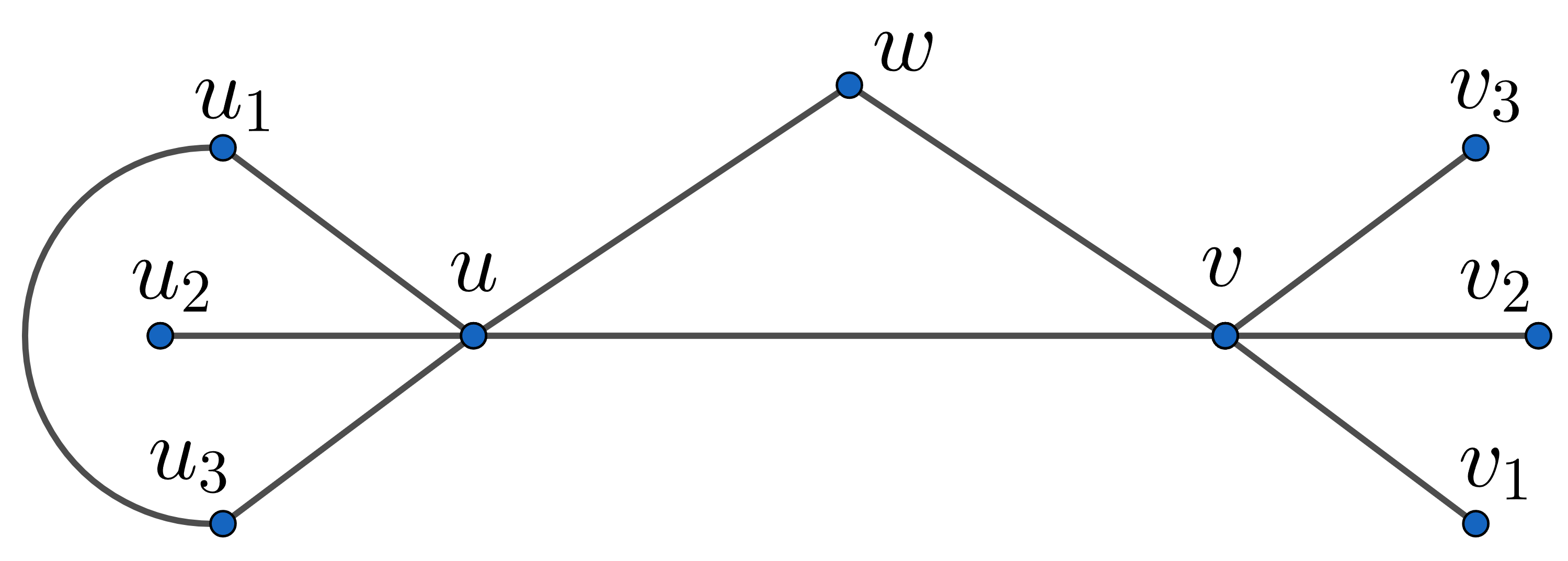}
		\caption{$u_1u_3\in E(G)$ leads to a contradiction.}
		\label{fig:pf14a}
	\end{subfigure}
	\hfill
	\begin{subfigure}{0.45\textwidth}
		\centering
		\includegraphics[width=5.5cm]{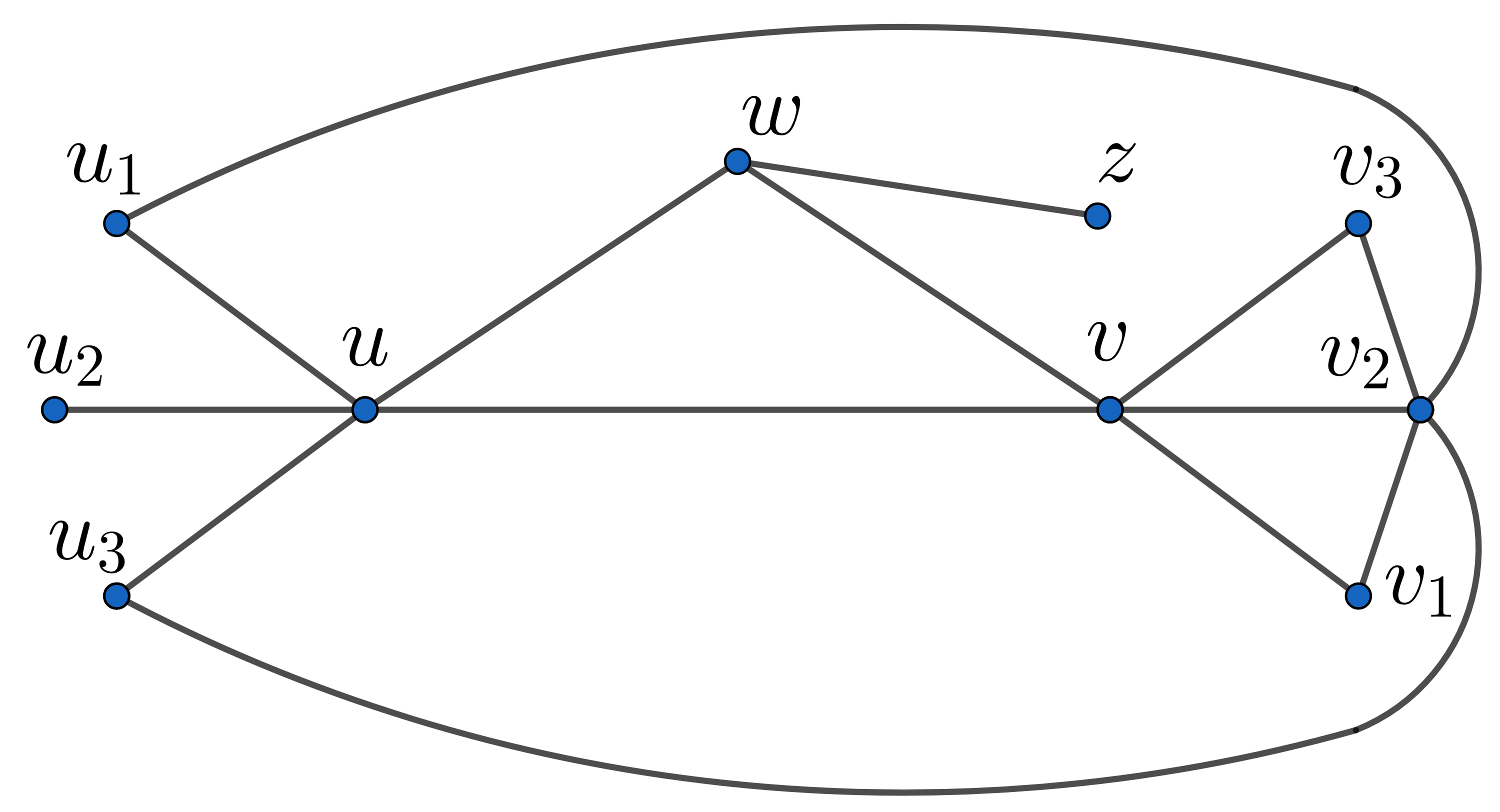}
		\caption{$x=y=v_2$.}
		\label{fig:pf14b}
	\end{subfigure}
	\caption{Case of $|N(u,v)|=1$.}
	\label{fig:pf14}
\end{figure}
	
Let $x\in N(u_1,v_1)$ and $y\in N(v_3,u_3)$. By planarity, and since $u_1u_3,v_1v_3\not\in E(G)$, we have $x=y$. If $x\not\in\{u_2,v_2\}$, then since $0\not\in A$ and by planarity, $x\in N(u_2,v_2)$, thus $\deg(x)\geq 6$, impossible. It follows that $x=y=v_2$, say, as in Figure \ref{fig:pf14b}. By Lemma \ref{le:sc}, the cycle $u,v,w$ is a face, hence there exists $z\in V(G)$ adjacent to $w$ and lying inside the cycle
\[u,w,v,v_3,v_2,u_1.\]
By planarity, and since $u,v,v_2$ already have five neighbours each, we obtain $N(z,v_1)=\emptyset$, contradiction.
	
Now we deal with the case $|N(u,v)|=2$. Let $w,x\in N(u,v)$.

We claim that the cycles $u,v,w$ and $u,v,x$ are facial. By contradiction, suppose that $\calc: u,v,w$ is non-facial. Let $a\neq x$ be a vertex outside of $\calc$. For every vertex $b$ inside of $\calc$, since $N(a,b)\neq\emptyset$, then $N(a,b)$ contains one of $u,v,w$.

If there exist four vertices inside of $\calc$, then two of them are adjacent to $w$, and one each to $u,v$, and $a$ is adjacent to $u,v,w$. There cannot be other vertices inside of $\calc$, or they would not have any common neighbour with $a$. Likewise, there cannot be other vertices outside of $\calc$ apart from $x,a$, or they would not have any common neighbour with any of the vertices inside $\calc$. It follows that $p\leq 9$, impossible.

Hence there are at most three vertices inside of $\calc$. Each of them is of degree $5$, hence by planarity they are all adjacent to one another and to all of $u,v,w$. Hence $G$ has a subgraph isomorphic to $K(3,3)$, contradicting planarity. Thereby, $[u,v,w]$ and $[u,v,x]$ are indeed faces of $G$, as claimed.

Now we write
\[N(u)=\{v,w,u_1,u_2,x\} \quad\text{ and }\quad N(v)=\{u,x,v_1,v_2,w\}\]
in cyclic order around $u,v$ respectively in the planar immersion of $G$, with
$u_1,u_2,v_1,v_2$ distinct. Let $c\in N(u_1,v_1)$. For now we assume that $c\neq w,x$, as in Figure \ref{fig:pf16a}. Apart from the nine
\begin{equation}
	\label{eq:p9}
	u,v,w,x,u_1,u_2,v_1,v_2,c,
\end{equation}
the rest of the vertices of $G$ appear either inside the cycle
\begin{equation}
	\label{eq:6cy}
	u,w,v,v_1,c,u_1,
\end{equation}
or inside the cycle
\[u,x,v,v_1,c,u_1\]
($v_2$ is also inside the first cycle and $u_2$ inside the second). Since $p\geq 20$, then w.l.o.g.\ there are at least six vertices
\[y_1,y_2,y_3,y_4,y_5,y_6\]
inside of \eqref{eq:6cy}. Let $z$ be a vertex distinct from \eqref{eq:p9} outside of \eqref{eq:6cy}. Now for $1\leq i\leq 6$, we have $N(z,y_i)\neq\emptyset$. It follows that $z,y_1,y_2,y_3,y_4,y_5,y_6$ are adjacent to some of $c,u_1,v_1$.
\begin{figure}[ht]
	\begin{subfigure}{0.45\textwidth}
		\centering
		\includegraphics[width=5.25cm]{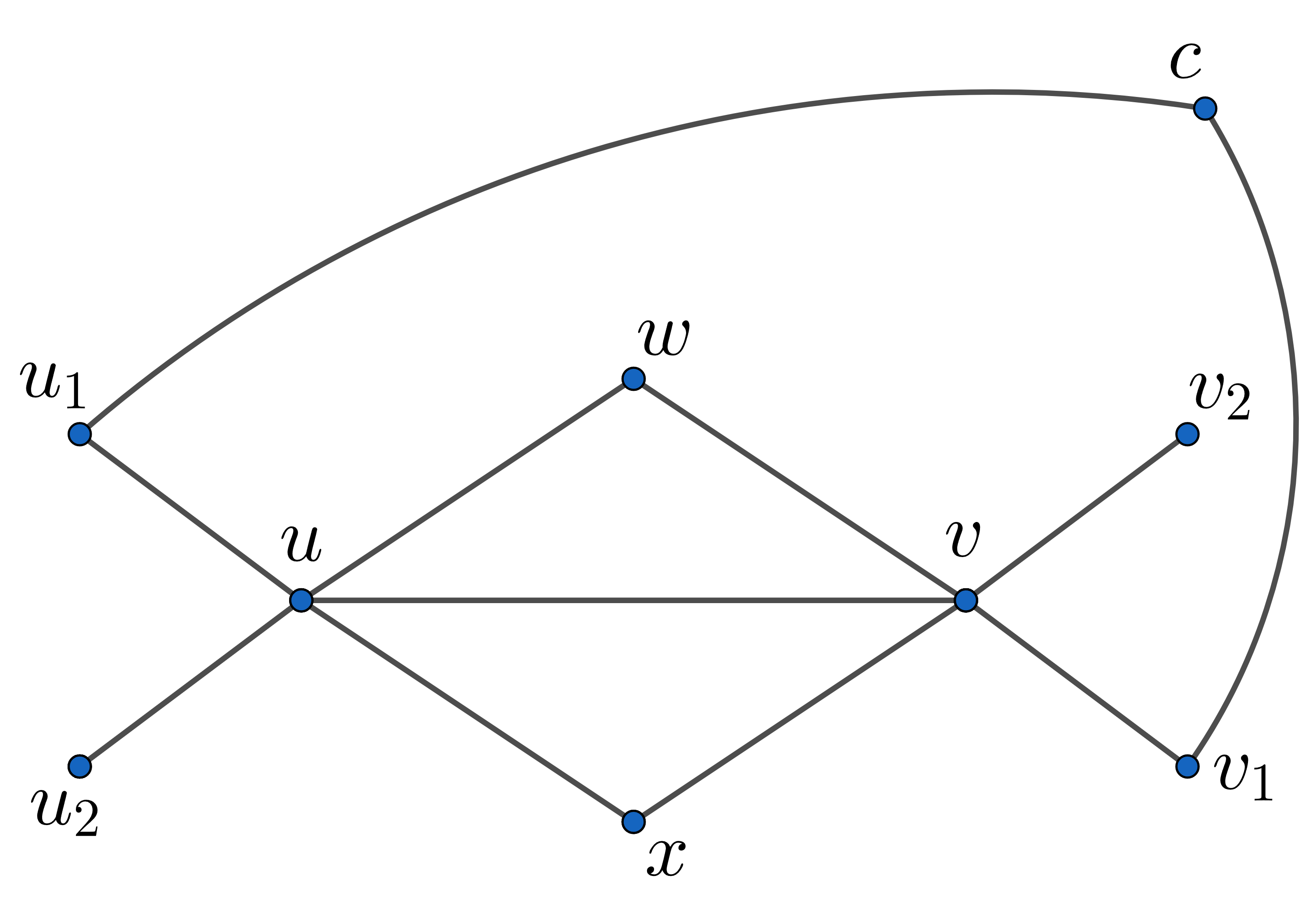}
		\caption{$c\not\in\{w,x\}$.}
		\label{fig:pf16a}
	\end{subfigure}
	\hfill
	\begin{subfigure}{0.45\textwidth}
		\centering
		\includegraphics[width=5.25cm]{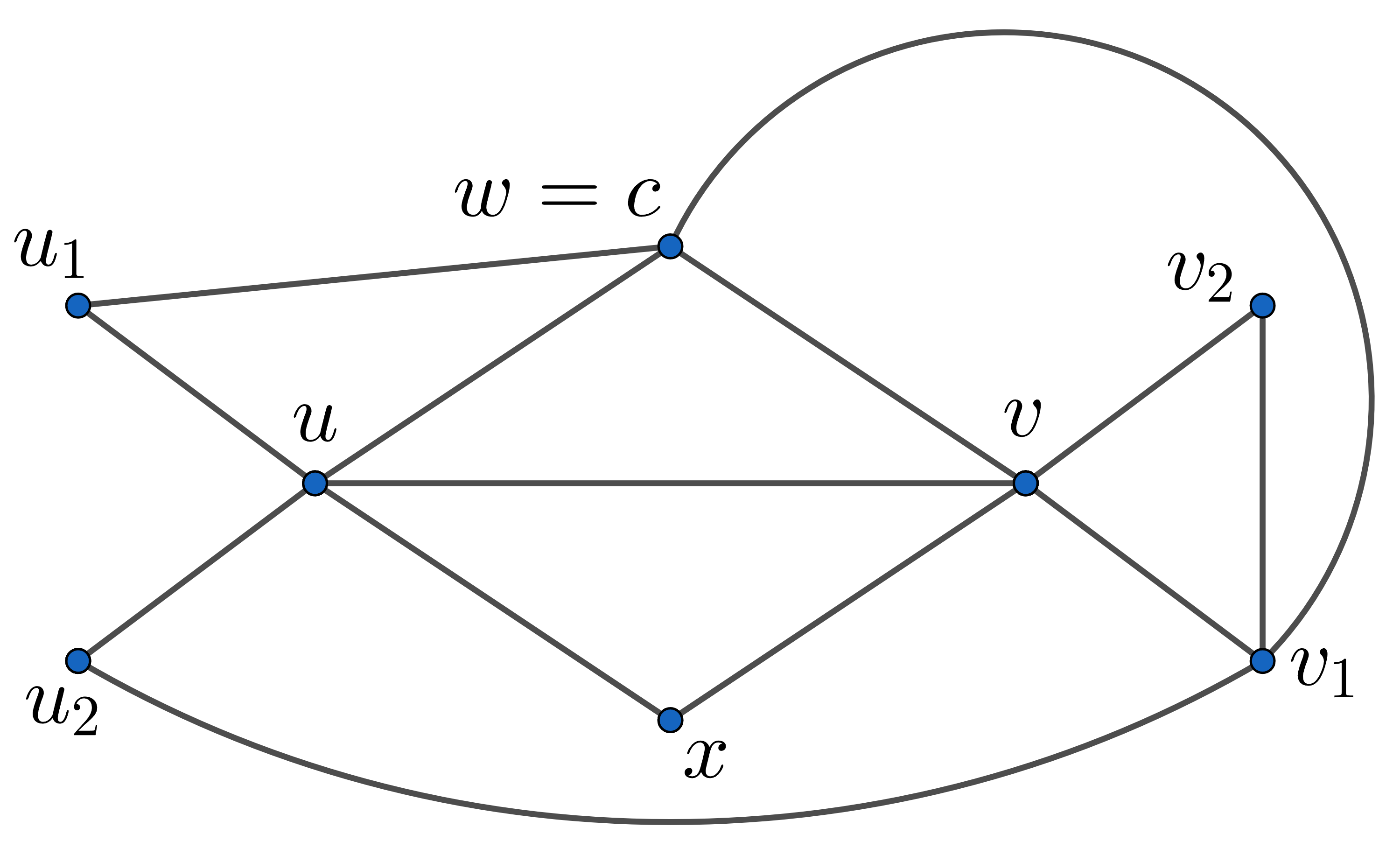}
		\caption{$c=w$.}
		\label{fig:pf16b}
	\end{subfigure}
	\caption{Case of $|N(u,v)|=2$.}
	\label{fig:pf16}
\end{figure}

Each of $c,u_1,v_1$ already has two neighbours. If each of $y_1,y_2,y_3,y_4,y_5,y_6$ is adjacent to at least two of $c,u_1,v_1$, then
\[\deg(c)+\deg(u_1)+\deg(v_1)\geq 2+2+2+6\cdot 2=18,\]
impossible. Hence $y_1$, say, is adjacent to exactly one of $c,u_1,v_1$, that we will denote by $t$. As $0\not\in A$, then every vertex outside of \eqref{eq:6cy}, including $u_2,x$, is adjacent to $t$. Since $\deg(t)=5$, then apart from $u_2,x$, there are no vertices outside of \eqref{eq:6cy}. Since $p\geq 20$, then there are at least eleven vertices inside of \eqref{eq:6cy}, each adjacent to at least one of $c,u_1,v_1$ (or they would have no common neighbours with $x$), leading to
\[\deg(c)+\deg(u_1)+\deg(v_1)\geq 2+2+2+11=17,\]
contradiction.

Now let $c=w$, as in Figure \ref{fig:pf16b}. Since $N(u_2,v_2)\neq\emptyset$, and since $w$ already has four neighbours, we obtain $u_2v_1,v_2v_1\in E(G)$. As $v$ already has five neighbours while $v_2$ only has two, there exists a vertex $y$ inside of the cycle $w,v_1,v_2,v$. Moreover, since every such $y$ has at least one common neighbour with each of $u_1,u_2$, and since $w,v_1$ already have four neighbours each, it follows that there exists a unique vertex $y$ inside of the cycle $w,v_1,v_2,v$. Then $\deg(y)\leq 4$, contradiction.

By Lemma \ref{le:0123}, the last remaining case is $|N(u,v)|=3$. We write $N(u,v)=\{w,x,y\}$. W.l.o.g., the cycles $[u,v,w]$ and $[u,v,y]$ are facial, as in Figure \ref{fig:pf17}. Let $z$ be a vertex inside of the cycle $u,x,v,w$, and $z'$ be a vertex inside of the cycle $u,x,v,y$. Since $N(z,z')\neq\emptyset$, then each of $z,z'$ is adjacent to one of $u,v,x$. Since $u,v$ already have four neighbours and $x$ already two, then there are at most five vertices in $G$ other than $u,v,w,x,y$, thus $p\leq 10$, contradiction.
\begin{figure}[ht]
	\centering
	\includegraphics[width=3.5cm]{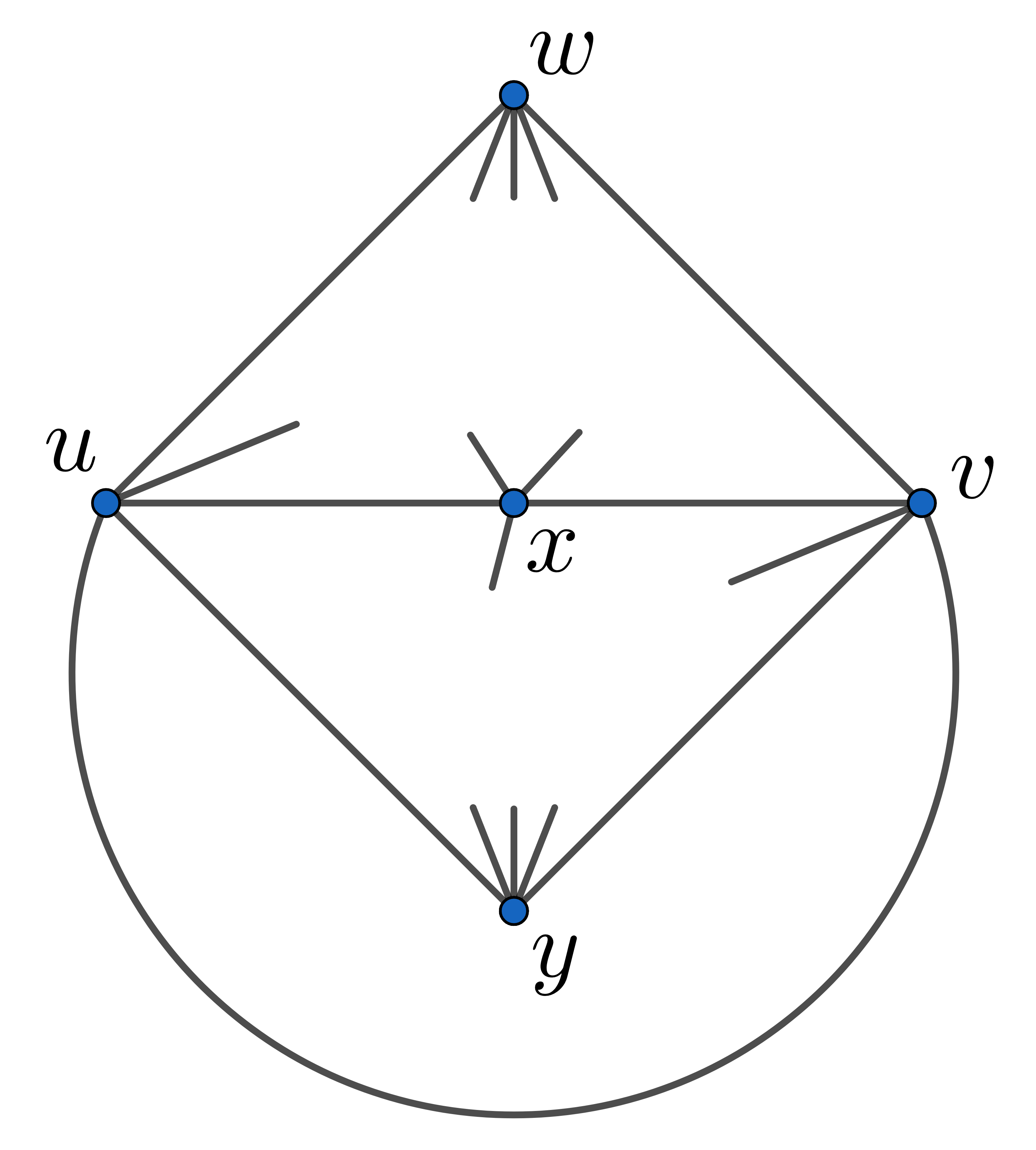}
	\caption{The case $|N(u,v)|=3$.}
	\label{fig:pf17}
\end{figure}

In each case, we have shown that $0\in A$. The proof of Lemma \ref{le:tec} is complete.

\section{Proof of Proposition \ref{prop:1A}}
\label{app:c}
	Let $r=3$ and suppose that $1\not\in A$. Fix $v\in V(G)$ and consider its neighbours $v_1,v_2,v_3$. Since $1\not\in A$, then each pair of $v_1,v_2,v_3$ has a common neighbour other than $v$. Let
	\[N(v_1,v_2)\supseteq\{v,a\}, \quad N(v_1,v_3)\supseteq\{v,b\}, \quad N(v_2,v_3)\supseteq\{v,c\},\]
	as in Figure \ref{fig:pf02}.
	\begin{figure}[ht]
		\centering
		\begin{subfigure}{0.30\textwidth}
			\centering
			\includegraphics[width=2.75cm]{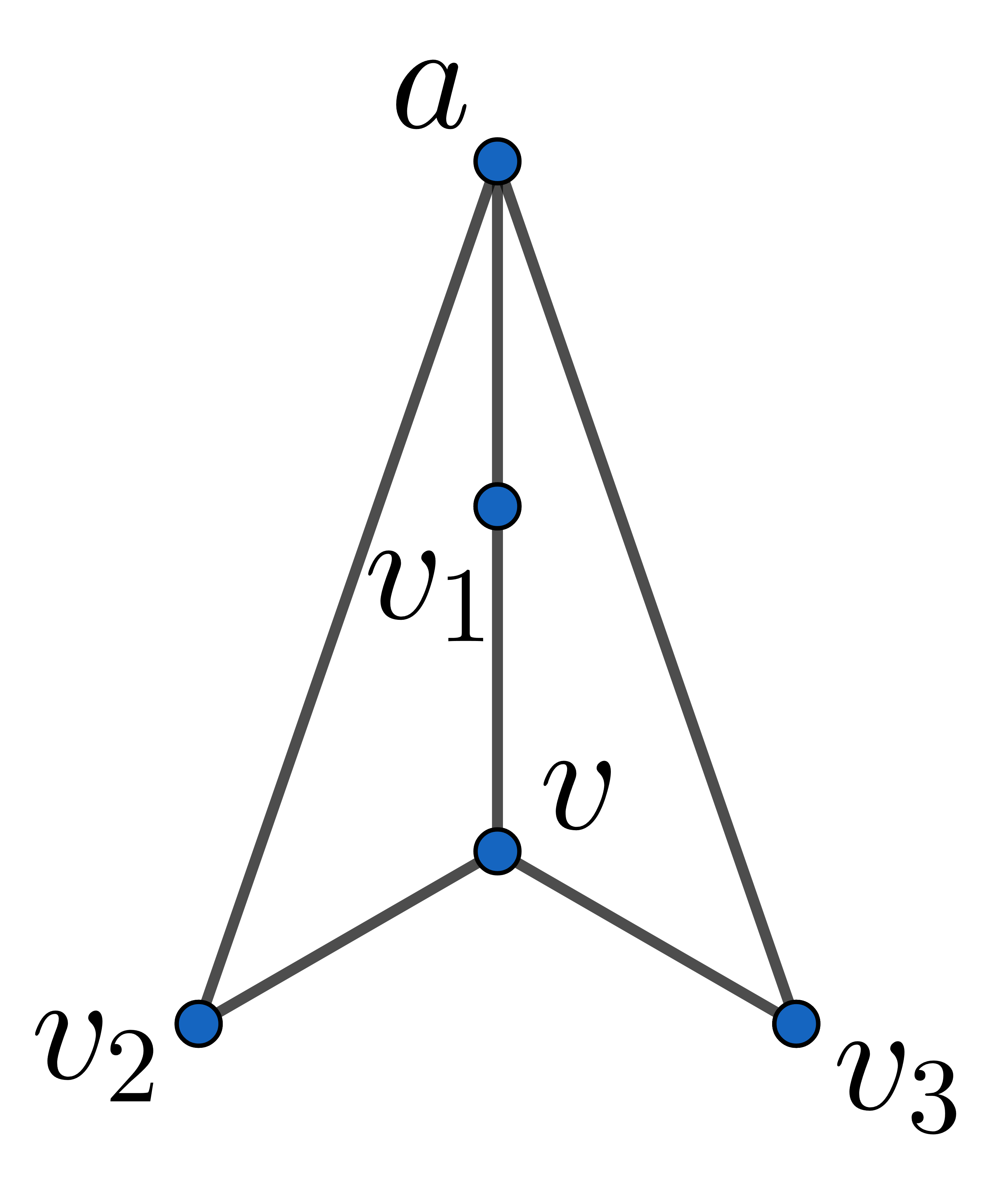}
			\caption{The case $a=b=c$.}
			\label{fig:pf02a}
		\end{subfigure}
		\hfill
		\begin{subfigure}{0.30\textwidth}
			\centering
			\includegraphics[width=2.75cm]{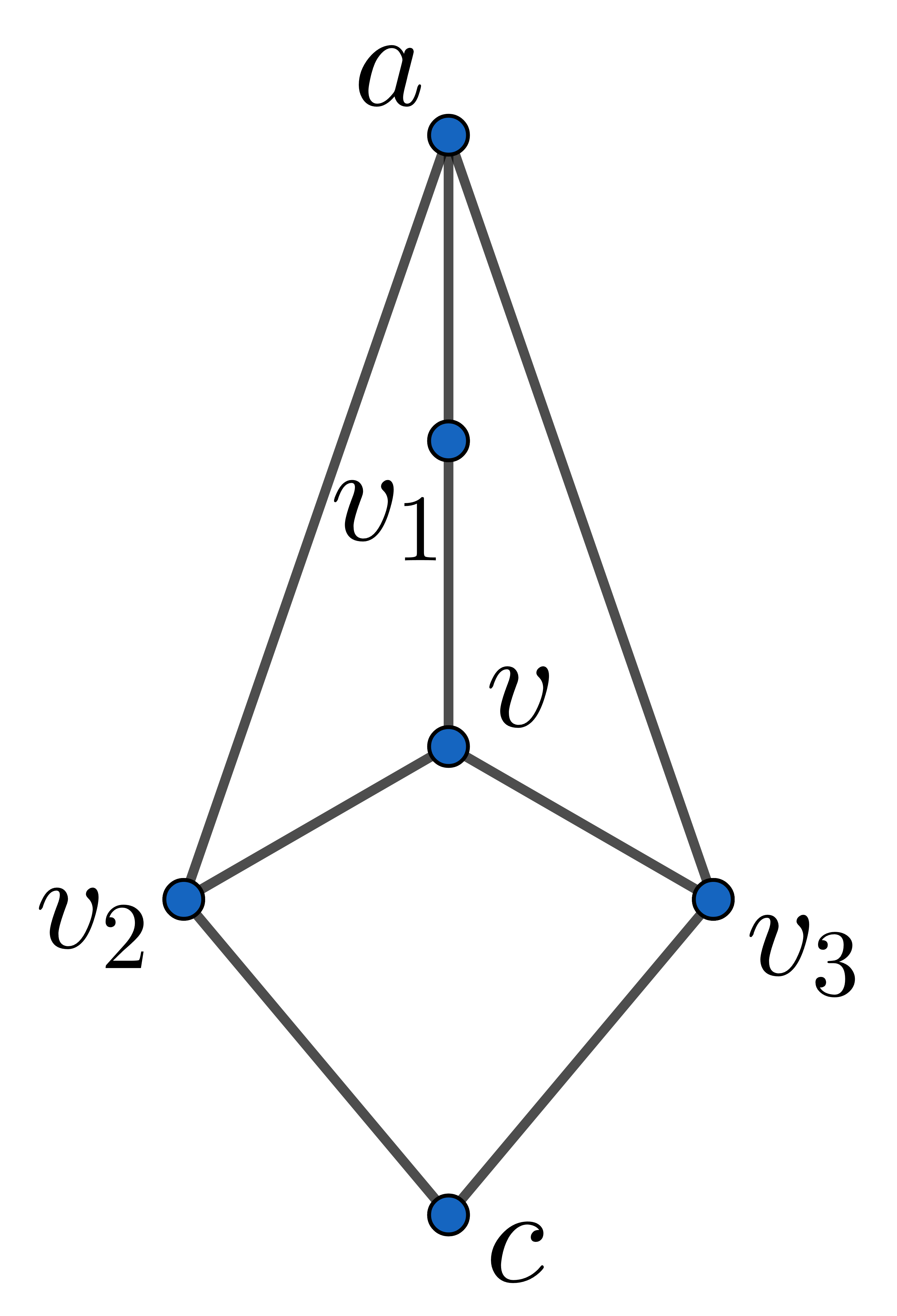}
			\caption{The case $a=b\neq c$.}
			\label{fig:pf02b}
		\end{subfigure}
		\hfill
		\begin{subfigure}{0.30\textwidth}
			\centering
			\includegraphics[width=3.5cm]{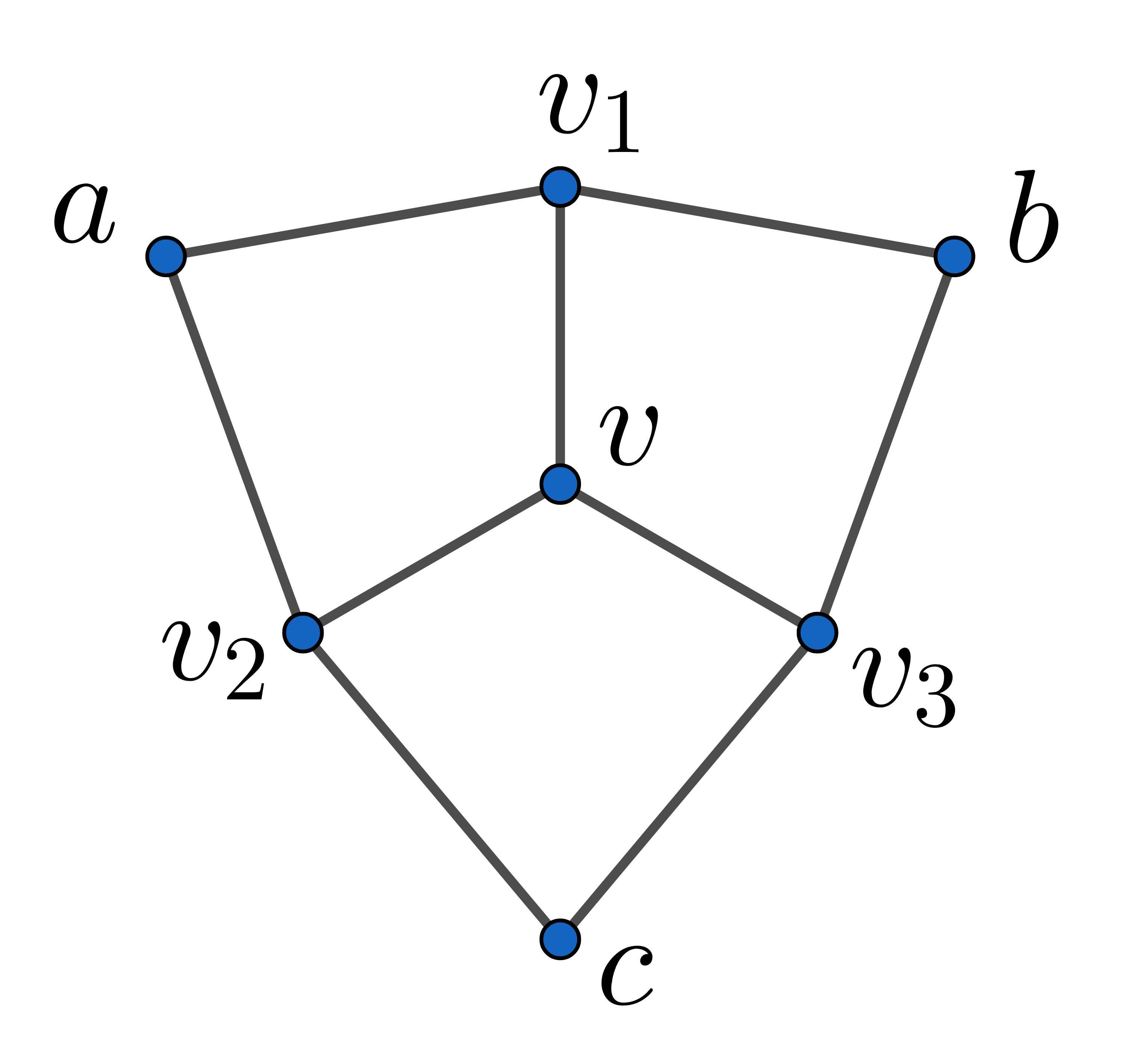}
			\caption{The case $a\neq b\neq c\neq a$.}
			\label{fig:pf02c}
		\end{subfigure}
		\caption{Proposition \ref{prop:1A}, $r=3$.}
		\label{fig:pf02}
	\end{figure}
	
	Suppose that $a=b=c$. Since $r=3$, the vertices $a,v$ have no more neighbours. By Lemma \ref{le:sc}, there can then be no vertices inside of the cycles $v,v_i,a,v_{(i\mod 3+1)}$, $i=1,2,3$. Hence $|V(G)|=5$, contradicting the handshaking lemma. Hence $a=b=c$ is impossible.
	
	Let us consider the case $a=b\neq c$. Now $v_1$ has a third neighbour $w$ other than $v,a$. By planarity, $w$ lies either inside the cycle $v,v_1,a,v_2$, or inside the cycle $v,v_1,a,v_3$. In either case, this contradicts Lemma \ref{le:sc}.
	
	The last remaining case is when $a,b,c$ are all distinct. If $a=v_3$, then $G$ is the tetrahedron, otherwise $\{v_1,v_2\}$ would be a $2$-cut. Hence if $a,b,c$ are all distinct and $G$ is not the tetrahedron, then
	\[\{a,b,c\}\cap\{v_1,v_2,v_3\}=\emptyset.\]
	Since $1\not\in A$, then $a,b$ have a common neighbour $x$ other than $v_1$, $a,c$ have a common neighbour $y$ other than $v_2$, and $b,c$ have a common neighbour $z$ other than $v_3$. Since $r=3$, we have $x=y=z$, thus $G$ is the cube.
	
	Let $r=4$, $1\not\in A$, $v\in V(G)$, and $N(v)=\{v_1,v_2,v_3,v_4\}$, in this order around $v$ in the planar immersion of $G$. Reasoning as above, let $N(v_1,v_3)\supseteq\{v,a\}$ and $N(v_2,v_4)\supseteq\{v,b\}$. By planarity, either $a=b$, or $a\in\{v_2,v_4\}$, or $b\in\{v_1,v_3\}$. If $a=b$ (Figure \ref{fig:pf05a}), then checking all cycles already drawn in $G$, by Lemma \ref{le:sc} there exist no further vertices in $G$. Hence $v_1,v_2,v_3,v_4$ form a cycle and $G$ is the octahedron.
	\begin{figure}[ht]
		\centering
		\begin{subfigure}{0.32\textwidth}
			\centering
			\includegraphics[width=3.cm]{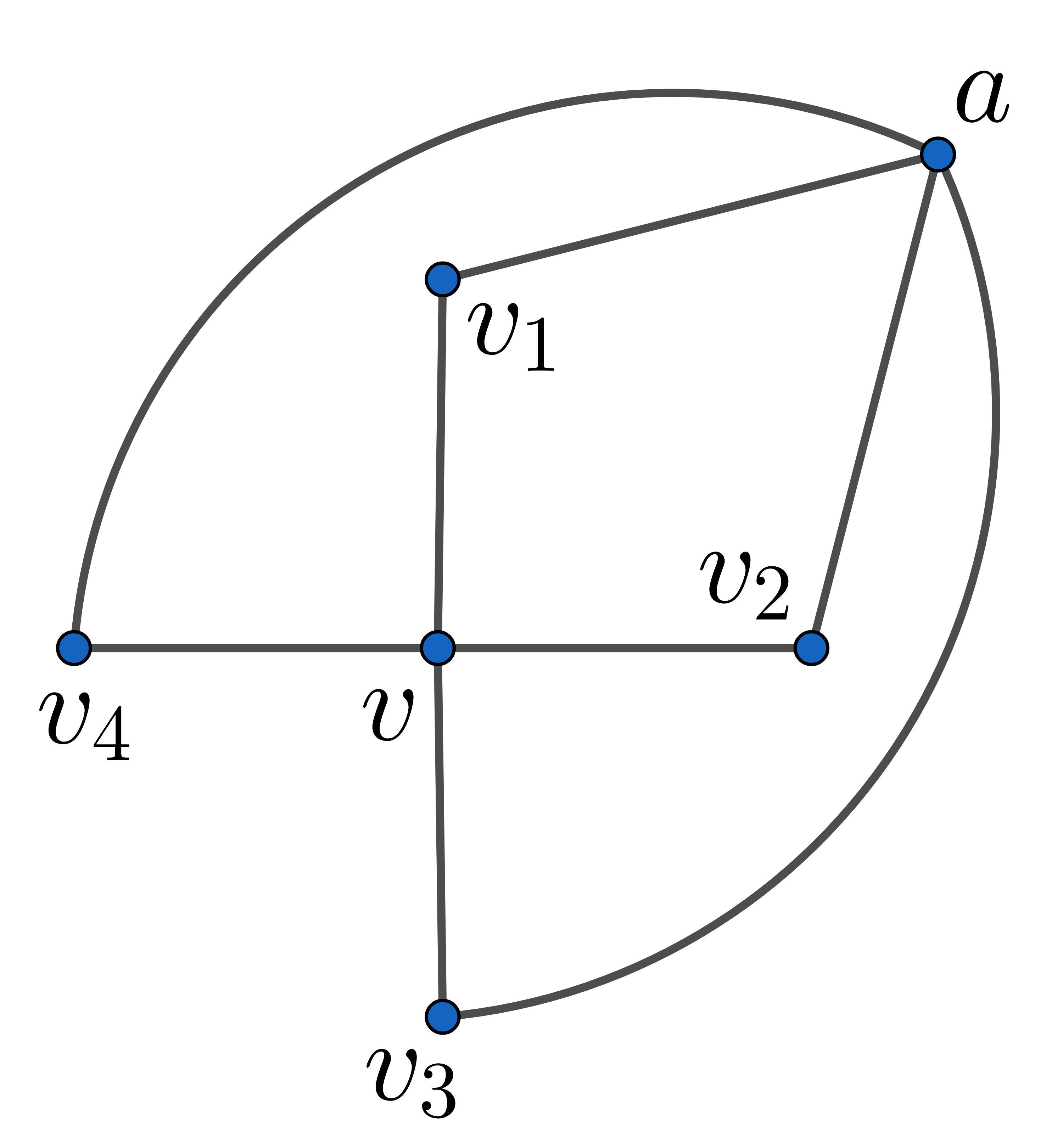}
			\caption{The case $a=b$.}
			\label{fig:pf05a}
		\end{subfigure}
		\hfill
		\begin{subfigure}{0.32\textwidth}
			\centering
			\includegraphics[width=3.25cm]{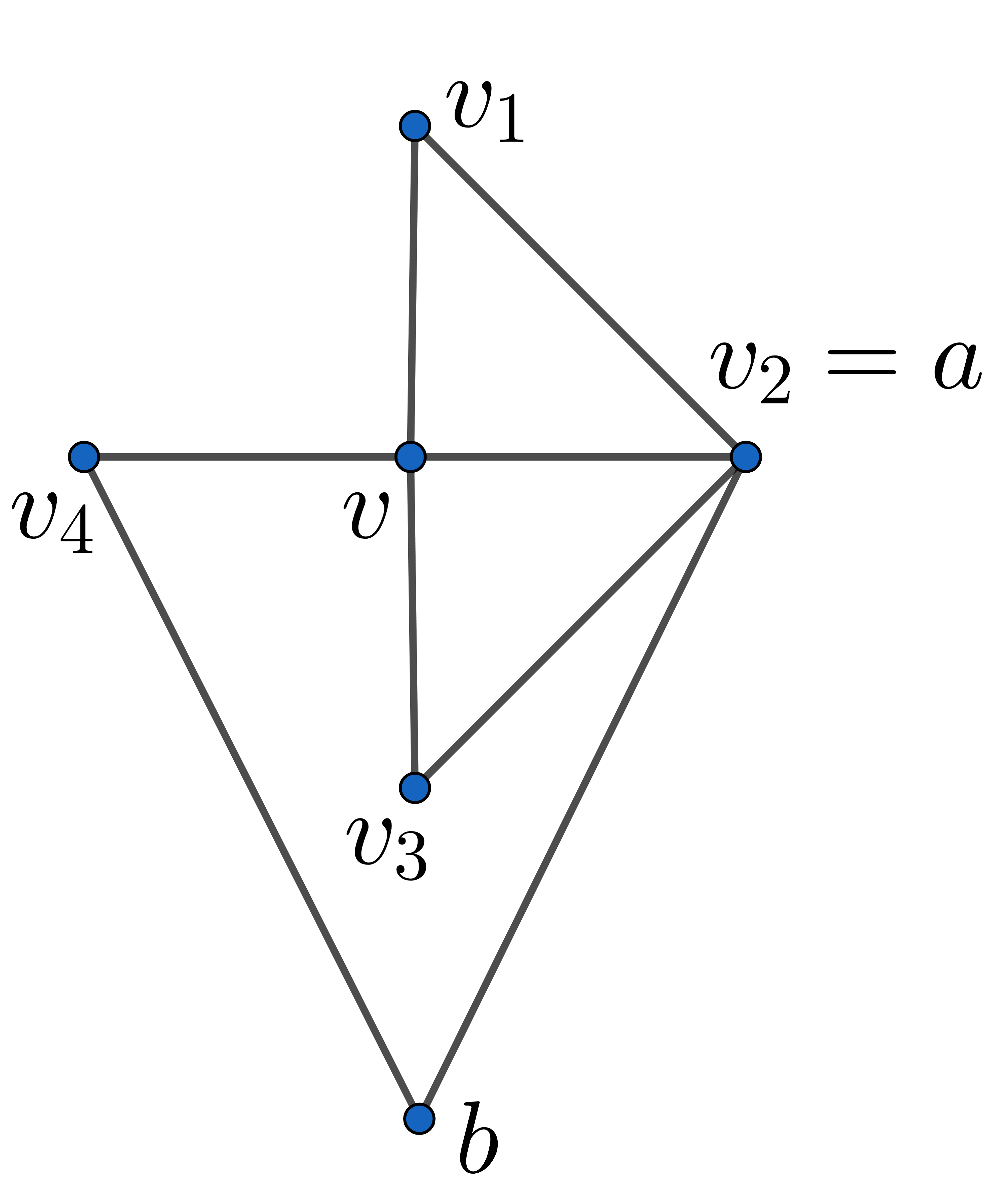}
			\caption{The case $a=v_2$ and $b\neq v_1,v_3$.}
			\label{fig:pf05b}
		\end{subfigure}
		\hfill
		\begin{subfigure}{0.32\textwidth}
			\centering
			\includegraphics[width=3.25cm]{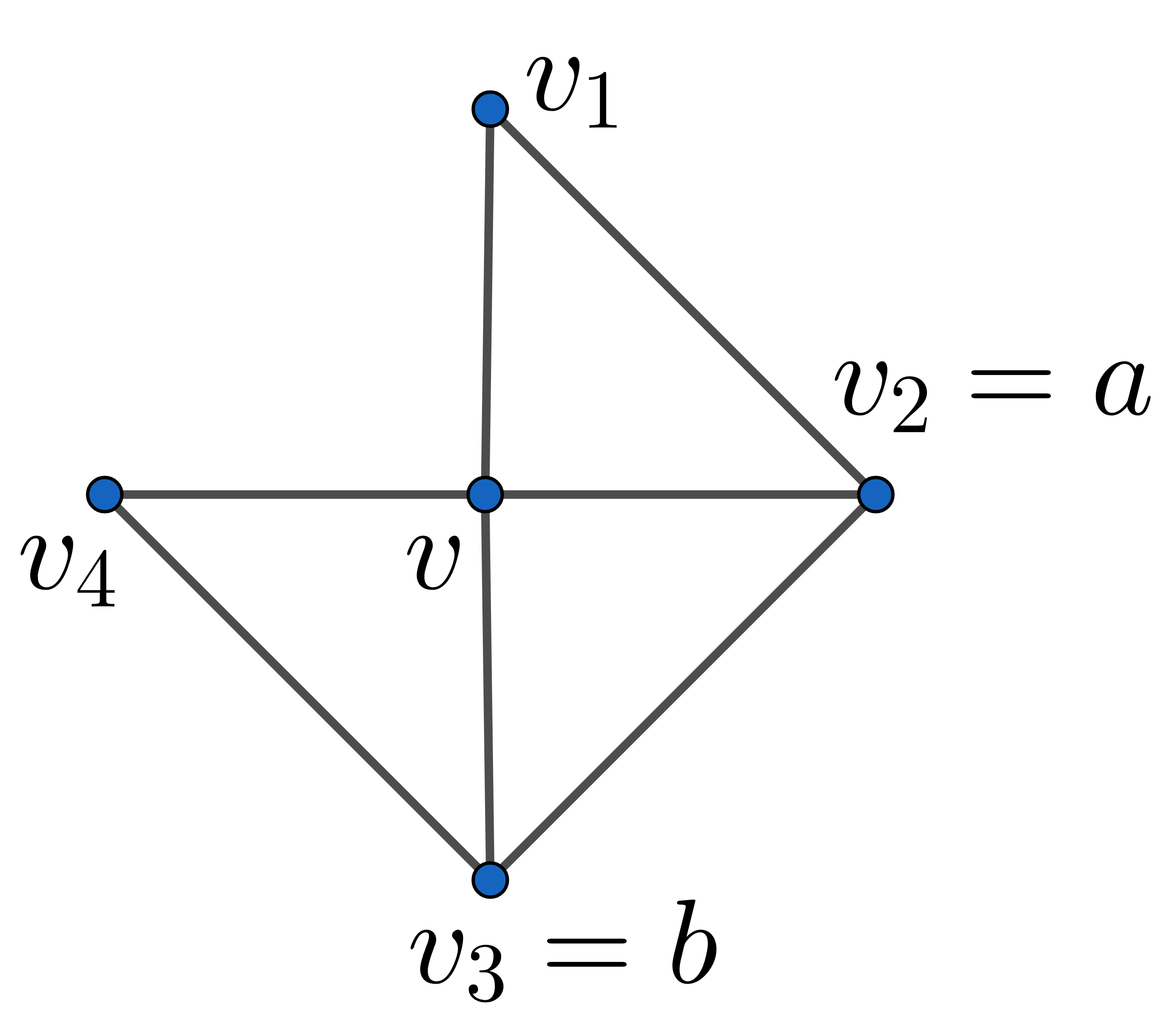}
			\caption{The case $a=v_2$ and $b=v_3$.}
			\label{fig:pf05c}
		\end{subfigure}
		\caption{Proposition \ref{prop:1A}, $r=4$.}
		\label{fig:pf05}
	\end{figure}
	
	If $a\neq b$, then w.l.o.g.\ we may assume $a=v_2$. Let $b\not\in\{v_1,v_3\}$ (Figure \ref{fig:pf05b}). By planarity, the second common neighbour of $v_1,v_2$ cannot be $v_3$, hence it is $b$. Likewise, the second common neighbour of $v_2,v_3$ is $b$. Considering the cycles $v,v_1,b,v_4$ and $v,v_3,b,v_4$, we invoke Lemma \ref{le:sc} to see that there are no further vertices in $G$, thus again $G$ is the octahedron.
	
	It remains to inspect the case $a=v_2$ and $b=v_3$ (Figure \ref{fig:pf05c}). By Lemma \ref{le:sc}, $v_1v_3\not\in E(G)$. Hence the second common neighbour of $v_1,v_2$ is a new vertex $c$. Likewise, the second common neighbour of $v_2,v_3$ is $c$. Hence the second common neighbour of $v_3,v_4$ is still $c$. Since $\{v_1,v_4\}$ is not a cutset, we have $v_1v_4\in E(G)$, thus $G$ is once more the octahedron.
	
	Let $r=5$ and $1\not\in A$. We will show that $G$ is a triangulation of the sphere, hence it is the icosahedron. By contradiction, let $F$ be a face of $G$ of length at least $4$. By planarity, we can find consecutive vertices $u_1,u_2,u_3$ on $F$ such that $u_1u_3\not\in E(G)$. In the planar immersion of $G$, the neighbours of $u_2$ in cyclic order around this vertex are
	\[u_1,a,b,c,u_3,\]
	say. Let $N(u_1,c)\supseteq\{u_2,v\}$, and $N(u_3,a)\supseteq\{u_2,w\}$. The reader may refer to Figure \ref{fig:pf08a}. Since $u_1u_3\not\in E(G)$, then by planarity we have either $v=w$, or $v=a$, or $w=c$. Now $v=w$ implies
	\[N(u_2,v)\supseteq\{u_1,u_3,a,c\},\]
	contradicting Lemma \ref{le:0123}. Hence w.l.o.g.\ we may take $v=a$, as in Figure \ref{fig:pf08b}. Now again by planarity, either $N(u_3,b)\supseteq\{u_2,a\}$, or $N(u_3,b)\supseteq\{u_2,c\}$. Since $a$ already has four neighbours, 
	$N(u_3,b)\supseteq\{u_2,c\}$ (and we may take $w=c$). 
	By planarity and since $1\not\in A$, we get $N(u_1,b)\supseteq\{u_2,a\}$ (see Figure \ref{fig:pf08c}). Now let $N(u_1,u_2)\supseteq\{a,x\}$. Since $u_1u_3\not\in E(G)$, $x\neq u_3$. By planarity, $x\neq b$. Hence $x=c$. By Lemma \ref{le:sc} the cycles $a,b,c$, $a,b,u_2$, and $c,b,u_2$ are facial. Thus $\deg(b)=3$, contradiction. Therefore, $G$ is indeed a triangulation, hence it is the icosahedron.
	\begin{figure}[ht]
		\centering
		\begin{subfigure}{0.32\textwidth}
			\centering
			\includegraphics[width=4.5cm]{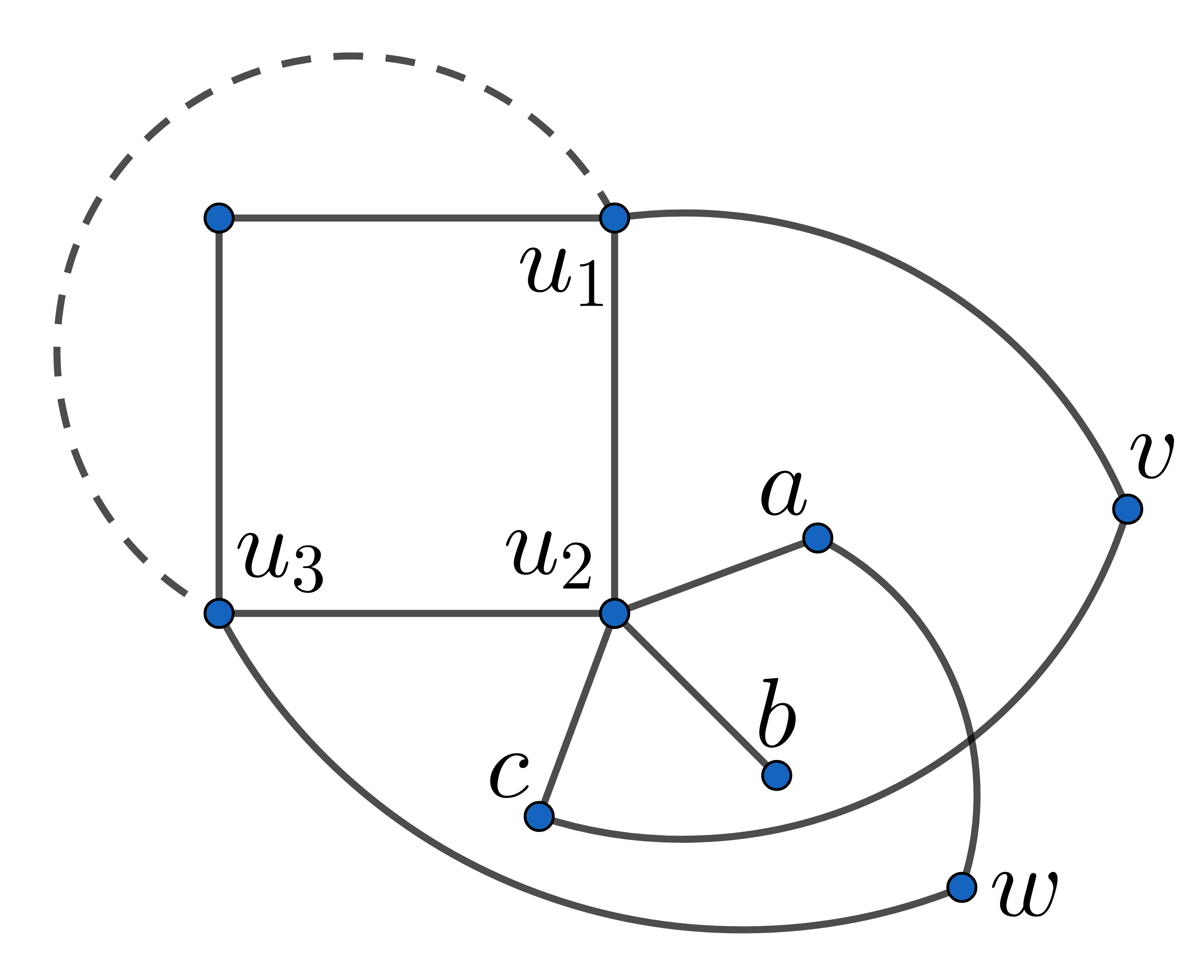}
			\caption{The dashed line indicates that $u_1u_3\not\in E(G)$.}
			\label{fig:pf08a}
		\end{subfigure}
		\hfill
		\begin{subfigure}{0.32\textwidth}
			\centering
			\includegraphics[width=4.5cm]{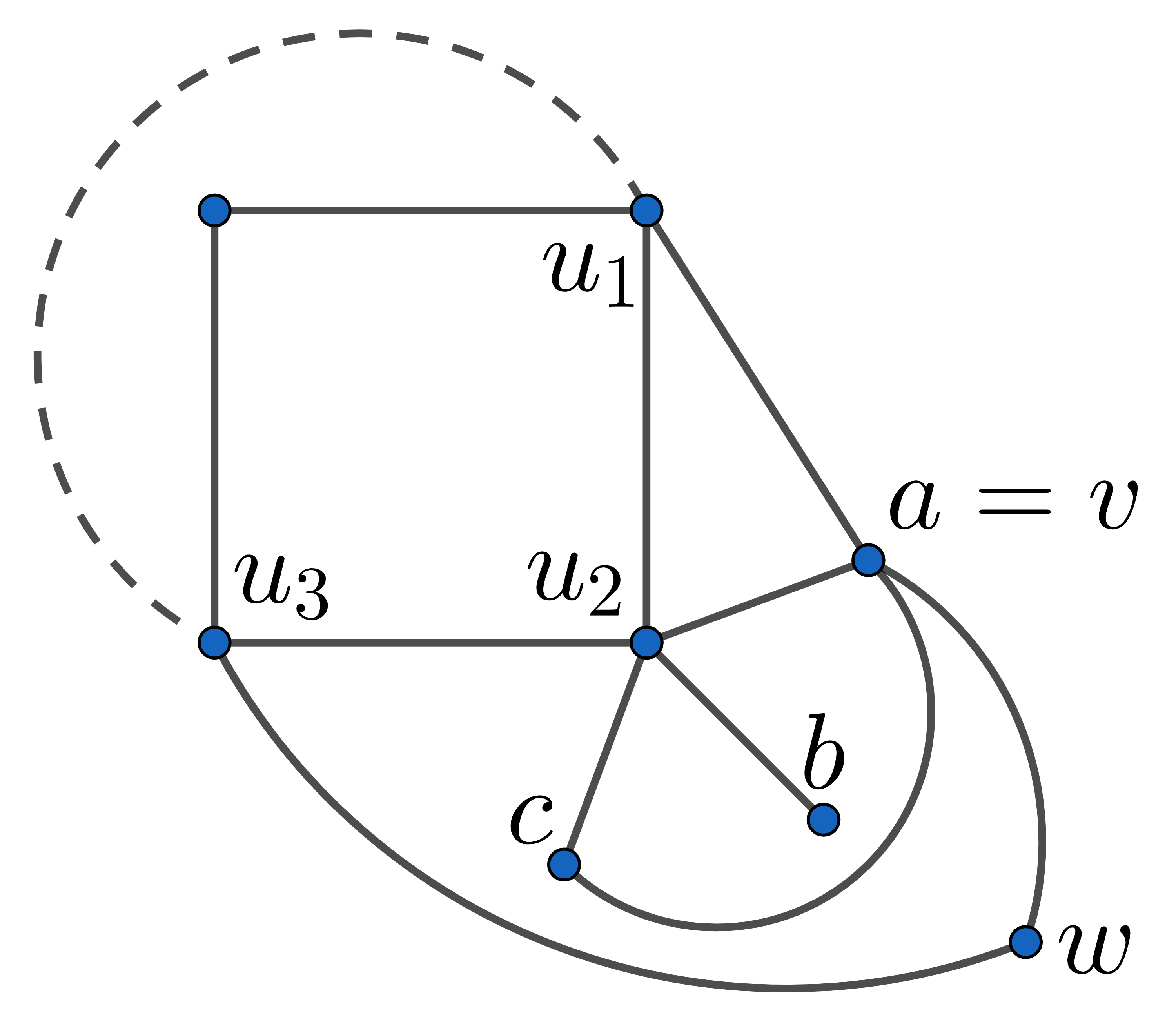}
			\caption{The case $v=a$.}
			\label{fig:pf08b}
		\end{subfigure}
		\hfill
		\begin{subfigure}{0.32\textwidth}
			\centering
			\includegraphics[width=4.75cm]{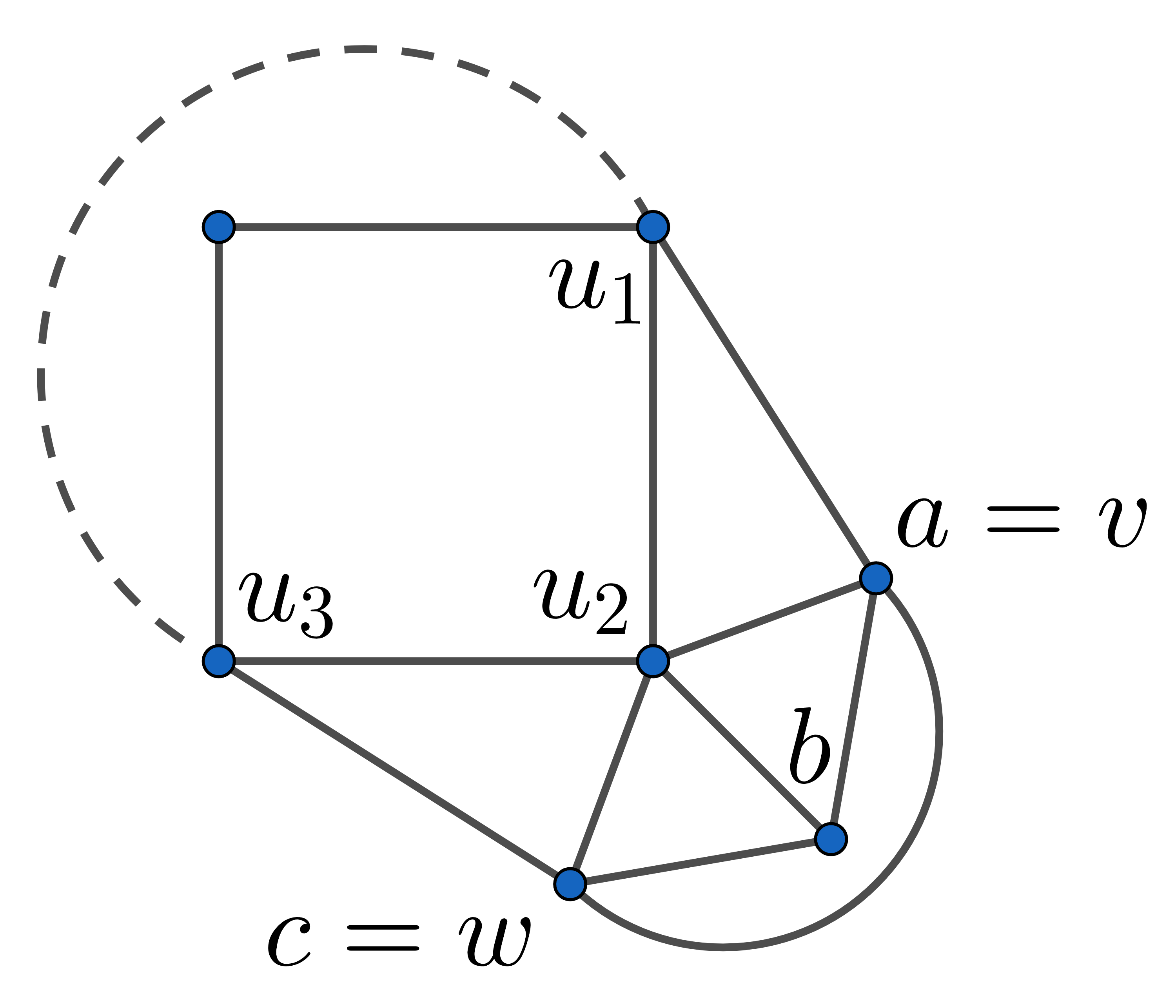}
			\caption{The case $v=a$ and $w=c$.}
			\label{fig:pf08c}
		\end{subfigure}
		\caption{Proposition \ref{prop:1A}, $r=5$.}
		\label{fig:pf08}
	\end{figure}

\paragraph{Future directions.}
It would be interesting to focus on the quintic polyhedra of the four existing types $\{0,1,2\}$, $\{0,1,2,3\}$, $\{0,1,2,4\}$, and $\{0,1,2,3,4\}$, and to discover their properties.
\\
Studying graphs via their collection of quantities of common neighbours for each pair of distinct vertices is one instance of the vast problem of $n$-degree sequences, as explained in Section \ref{sec:disc} and in \cite{maffucci2025common}. It would be intriguing to study the $n$-degree sequence of planar graphs and polyhedra.

\paragraph{Data availability statement.}
The data generated is available on kind request.

\paragraph{Declaration.}
The author has no financial or non-financial conflicts of interest to disclose.

\paragraph{Acknowledgements.}
Riccardo W. Maffucci was partially supported by Programme for Young Researchers `Rita Levi Montalcini' PGR21DPCWZ \textit{Discrete and Probabilistic Methods in Mathematics with Applications}, awarded to Riccardo W. Maffucci.
\\
The author is very grateful to Primož Šparl, and also to an anonymous referee, for helpful remarks and suggestions on a previous version.

\bibliographystyle{abbrv}
\bibliography{allgra}
\end{document}